\documentclass[12pt,reqno]{amsart}

\usepackage[utf8]{inputenc}
\usepackage{amssymb}
\usepackage{amsmath}
\usepackage{amsthm}
\usepackage{amsfonts}
\usepackage{bbm}
\usepackage{bookmark}
\usepackage{hyperref}
\hypersetup{hidelinks, pdfstartview={FitH}}
\usepackage{color}
\usepackage{xcolor}
\usepackage{mathrsfs}
\usepackage[normalem]{ulem}
\usepackage{caption}
\usepackage{graphicx}
\usepackage{tikz}

\theoremstyle{plain}
\newtheorem{theorem}{Theorem}[section]
\newtheorem{thma}{Theorem}
\newtheorem{lemma}[theorem]{Lemma}
\newtheorem{prop}[theorem]{Proposition}
\newtheorem{corollary}[theorem]{Corollary}

\theoremstyle{definition}

\theoremstyle{remark}
\newtheorem{rem}[theorem]{Remark}

\numberwithin{equation}{section}

\newcommand{\g}{{\mathbbm g}}
\newcommand{\h}{{\mathbbm h}}

\DeclareMathOperator{\supp}{supp}

\DeclareMathOperator{\pv}{p.v.}

\DeclareFontFamily{U}{mathx}{\hyphenchar\font45}
\DeclareFontShape{U}{mathx}{m}{n}{
<5> <6> <7> <8> <9> <10>
<10.95> <12> <14.4> <17.28> <20.74> <24.88>
mathx10
}{}
\DeclareSymbolFont{mathx}{U}{mathx}{m}{n}
\DeclareFontSubstitution{U}{mathx}{m}{n}
\DeclareMathAccent{\widecheck}{0}{mathx}{"71}

\theoremstyle{plain}

\title[3D curved simplex Hilbert transform]{A curved three-dimensional simplex Hilbert transform}

\author[Durcik]{Polona Durcik}
\address{Schmid College of Science and Technology, Chapman University, Orange, CA, USA}
\email{durcik@chapman.edu}

\author[Fraccaroli]{Marco Fraccaroli}
\address{Department of Mathematics and Statistics, University of Massachusetts Lowell, Lowell, MA, USA}
\email{marco\_fraccaroli@uml.edu}

\author[Roos]{Joris Roos}
\address{Department of Mathematics and Statistics, University of Massachusetts Lowell, Lowell, MA, USA}
\email{joris\_roos@uml.edu}

\subjclass[2020]{42B15, 42B20, 42B25}

\keywords{multilinear singular integrals, multilinear oscillatory integrals, simplex Hilbert transform}

\date{\today}

\begin{document}

\begin{abstract}
We prove Lebesgue space estimates for a curved three-dimensional simplex Hilbert transform associated with the moment curve $t \mapsto (t,t^2,t^3)$. The analysis relies on smoothing inequalities for curved multilinear averages and on estimates for a three-dimensional anisotropic variant of the twisted paraproduct. The result has been formally verified in Lean 4.
\end{abstract}

\maketitle

\section{Introduction}
Let $N$ be a positive natural number. Consider the $N$-linear operator
\begin{equation}\label{eqn:curvedsimplex}
\mathcal{T}_N (\mathbf{f})(x) = \pv \int_{{{\mathbb R}}} \prod_{n=1}^N f_n(x + t^n e_n) \frac{dt}{t},
\end{equation}
initially defined for $N$-tuples $\mathbf{f} = (f_1,\dots,f_N)$ of Schwartz functions on ${{\mathbb R}}^N$.
This can be viewed as an $N$-dimensional simplex Hilbert transform associated with the moment curve $t \mapsto (t,t^2,\dots,t^N)$.
The operator $\mathcal{T}_1$ coincides with the classical Hilbert transform, and the two-dimensional operator $\mathcal{T}_2$ is also known to satisfy Lebesgue space estimates \cite{CDR21}.
This paper focuses on the three-dimensional case $N=3$.
\begin{thma} \label{thm:main}
Let $p \in (1, \tfrac{4}{3} )$, $p_1,p_2,p_3\in (1,4)$ such that
$\frac{1}{p_1} + \frac{1}{p_2} + \frac{1}{p_3} = \frac{1}{p}.$
Then there exists a constant $C\in (0,\infty)$ such that, for every triple $\mathbf{f}$ of Schwartz functions,
\begin{equation*}
\|\mathcal{T}_3(\mathbf{f})\|_{p} \le C\prod_{n=1}^3 \|f_n\|_{p_n}.
\end{equation*}
\end{thma}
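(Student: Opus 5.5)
We follow the strategy that worked for $\mathcal{T}_2$ in \cite{CDR21}: split the kernel into a "curved, single-scale" part that is controlled by smoothing inequalities, and a "flat" part reduced to a multilinear paraproduct-type form. Dualize with $f_4\in L^{p'}$ and write
\[
\Lambda(f_1,f_2,f_3,f_4)=\sum_{j\in\mathbb Z}\int_{\mathbb R^3}\int_{\mathbb R}\prod_{n=1}^3 f_n(x+t^ne_n)\,f_4(x)\,\psi_j(t)\,dt\,dx ,
\]
where $\psi_j(t)=2^{j}\psi(2^{j}t)$ is an odd smooth Littlewood--Paley decomposition of $\frac1t$. Rescaling $x_n\mapsto 2^{-nj}x_n$ reduces each scale to the unit scale with anisotropic dilations $(2^{-j},2^{-2j},2^{-3j})$. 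We then decompose each $f_n$ in its $n$-th variable by frequency at the natural scale $2^{nj}$: $f_n=P^{(n)}_{\le nj+\ell}f_n+(\text{high part})$, with $\ell\ge0$ a parameter.

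\textbf{Step 1 (smoothing / high frequency part).} If at least one $f_n$ has frequency $\gtrsim 2^{nj+\ell}$ in $x_n$, we use a single-scale trilinear smoothing inequality for the averages $\int\prod_n f_n(x+t^ne_n)\psi(t)\,dt$ at unit scale. It should gain $2^{-\sigma\ell}$ whenever one function is frequency-localized at $|\xi_n|\sim 2^\ell$, in an $L^{q_1}\times L^{q_2}\times L^{q_3}\to L^q$ estimate. Its proof would go by iterated Cauchy--Schwarz/TT$^*$ in $t$, which eliminates the functions one at a time. This uses the nondegeneracy of the moment curve: the torsion gives van der Corput type decay for the resulting oscillatory integral in the last remaining function's frequency. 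Summing in $j$ for a fixed $\ell$ requires square-function control. We would pass from single scales to the full sum by using the frequency localization, Littlewood--Paley square functions and vector-valued maximal functions, possibly losing a polynomial factor in $\ell$, which the $2^{-\sigma\ell}$ decay absorbs. Interpolation with trivial $L^1\times L^\infty\times L^\infty$ type bounds, which carry an $O(\ell)$ loss, gives the range.

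\textbf{Step 2 (low frequency part).} When all $f_n$ are localized below $2^{nj+\ell}$, the curve can be Taylor expanded, i.e.\ replaced at scale $2^{-j}$ by smooth bumps. The form then becomes a sum over $j$ of products of anisotropic averages and anisotropic Littlewood--Paley pieces. Because $\psi$ has mean zero, telescoping produces terms in which a difference $P_{\le j+1}-P_{\le j}$ falls on some function. The resulting object is a three-dimensional anisotropic twisted paraproduct: each $f_n$ is smoothed in the directions other than $e_n$, and the structure is that of the entangled forms in Kova\v{c}'s twisted paraproduct and its 3D cube generalizations by Durcik. We would prove its $L^{p}$ bounds by the telescoping/positivity method. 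This involves repeated Cauchy--Schwarz to dominate the form by "cube" forms with positive kernels, a Gaussian-type kernel choice so that derivatives in $j$ are exact, and then interpolation to reach $p_n\in(1,4)$ and $p<4/3$. The restriction on exponents presumably arises here and in Step 1, from the number of Cauchy--Schwarz steps: a local $L^4$ argument gives $p_n<4$.

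\textbf{Main obstacle.} The hardest step is the twisted paraproduct estimate in Step 2. In three dimensions with anisotropic scaling, the entangled structure does not reduce to the known 2D case. Controlling it requires building an ad hoc tree of Cauchy--Schwarz applications and verifying that every term arising from the telescoping is either positive or dominated by the maximal/square functions. We would first try to reduce to the (isotropic) 3D simplex-type bounds of Durcik by the choice of kernels $e^{-|\xi|^2}$ adapted to the anisotropic dilations. Finally, the $\ell$-sum from Step 1 converges geometrically, so combining the two steps proves Theorem~\ref{thm:main}.
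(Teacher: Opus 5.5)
Your outline has the right overall architecture. Low frequencies relative to the natural scales $2^{nj}$ go to an anisotropic twisted paraproduct. High frequencies go to a smoothing inequality, combined with interpolation between a single-scale gain and a polynomial loss. The gap is in Step~1. You treat every configuration in which \emph{at least one} $f_n$ is at high frequency by the same mechanism, and in three dimensions that mechanism fails for the two mixed configurations the paper has to split off.

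\textbf{One dominant frequency.} Suppose only $f_1$ is at high frequency: $k_1=\ell>0$ and $k_2,k_3<\ell-100$.
\begin{itemize}
\item The single-scale gain depends only on $\ell$, and there is no decay as $k_2,k_3\to-\infty$. Heuristically, the leading term is $\Delta^{(2)}_{2j+k_2}f_2(x)$ times a bilinear average.
\item So the small frequencies must be summed, leaving $S^{(2)}_{2j+\ell-101}f_2$ and $S^{(3)}_{3j+\ell-101}f_3$.
\item Now only one function carries Littlewood--Paley structure. Since $f_2,f_3$ have arbitrary frequency in $x_1$, the pieces $T_j$ have no orthogonality in $j$.
\item Neither $\sum_j\|\Delta^{(1)}_{j+\ell}f_1\|_{q_1}$ nor $\sum_j|\Delta^{(1)}_{j+\ell}f_1|$ is controlled, so square and maximal functions cannot carry out the $j$-sum.
\end{itemize}
This piece is itself a twisted paraproduct. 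The paper uses that the configuration is non-stationary ($\xi_1+2t\xi_2+3t^2\xi_3\neq0$ on the supports). Hence the summed profile $\kappa_{\mathrm{ML},1}$ is Schwartz with vanishing integral, and $T^{\mathrm{ML}}$ is bounded by Theorem~\ref{thm:main_twist}, with no smoothing at all.

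\textbf{Two comparable large frequencies.} Take e.g.\ $k_2\approx k_3\approx\ell$ and $k_1\le 0$, which is the new class $\mathfrak{F}_{\mathrm{MH},1,-}$.
\begin{itemize}
\item The phase can be stationary here, so smoothing is unavoidable.
\item But the trilinear gain $2^{-\delta\ell}$ is uniform in $k_1$, while the shifted-maximal-function loss grows polynomially in $|k|$. Interpolating piece by piece therefore cannot sum over $k_1\le0$.
\item Summing $k_1\le 0$ first produces $S^{(1)}_jf_1$. If you then sum single-scale norm bounds in $j$, the only admissible tuple is $(\infty,2,2)$. H\"older in $j$ forces $\ell^2\times\ell^2$ on the two Littlewood--Paley sequences, and $\sum_j\|\widetilde{\Delta}_jg\|_q^2\lesssim\|g\|_q^2$ holds only for $q\le2$.
\item You therefore need a single-scale $L^\infty\times L^2\times L^2\to L^1$ bound with decay. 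That tuple lies outside the range of the inequality of \cite{KMPWW24}.
\end{itemize}
The missing idea is the paper's fiberwise argument:
\begin{itemize}
\item pull $f_1$ out in $L^\infty$;
\item absorb the kernel of $S^{(1)}_0$ into the cutoff $\rho_s(t)=\psi(t)t^{-1}\widecheck{\varphi}(t-s)$;
\item apply, on each $x_1$-fiber, the bilinear smoothing inequality for $(t^2,t^3)$ (Theorem~\ref{thm:smoothing_2}), and $(t,t^3)$, $(t,t^2)$ for the other two permutations;
\item use its explicit dependence on $\|\rho_s'\|_1$ to integrate in $s$.
\end{itemize}

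\textbf{Smaller points.}
\begin{itemize}
\item In $\Gamma_3$ each $f_n$ is averaged along $e_n$ itself, not in the other directions.
\item The low-frequency part needs no Taylor expansion, since $\kappa_{\mathrm{L}}$ is exactly Schwartz with mean zero.
\item The exponent restriction comes only from the twisted paraproduct (the local $L^4$ Brascamp--Lieb bound), not from the smoothing parts.
\item The single-scale trilinear inequality you postulate is the deep result of \cite{KMPWW24}, which the paper imports, not a routine iterated Cauchy--Schwarz argument.
\end{itemize}
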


To put this result into context we describe related problems and results. Consider the \emph{simplex Hilbert transform}
\begin{equation*}
H_N (\mathbf{f})(x) = \pv \int_{{{\mathbb R}}} \prod_{n=1}^N f_n(x+t e_n) \frac{dt}{t},
\end{equation*}
initially defined for $N$-tuples $\mathbf{f} = (f_1,\dots,f_N)$ of Schwartz functions on ${{\mathbb R}}^N$, where $e_1, \dots, e_N$ are the standard unit vectors in ${{\mathbb R}}^N$. Therefore $H_1$ is the classical Hilbert transform and $H_2$ is the \emph{triangular Hilbert transform}. Establishing Lebesgue space estimates for the simplex Hilbert transform $H_N$ when $N \geq 2$ is a major open problem in harmonic analysis. Kova\v{c}, Thiele, and Zorin-Kranich \cite{KTZ15} obtained partial results for a Walsh model in the case $N=2$, under additional assumption on the shape of one of the functions $f_n$. For truncated simplex Hilbert transforms, Zorin-Kranich \cite{ZK17} proved non-trivial cancellation results in terms of the number of scales considered. This was further improved in \cite{DKT19}.

The difficulty of the problem for the simplex Hilbert transform $H_N$ is also reflected in its relation to the \emph{$N$-linear Hilbert transform}
\begin{equation*}
L_N (\mathbf{f})(x) = \pv \int_{{{\mathbb R}}} \prod_{n=1}^N f_n(x+ n t) \frac{dt}{t}
\end{equation*}
initially defined for $N$-tuples $\mathbf{f} = (f_1,\dots,f_N)$ of Schwartz functions on ${{\mathbb R}}$. Lebesgue space estimates for $H_N$ imply estimates for $L_N$ in the same range of exponents. The operator $L_2$ is the \emph{bilinear Hilbert transform}, for which Lebesgue space estimates were shown by Lacey and Thiele \cite{LT97, LT99}. For $N \geq 3$, establishing such estimates for $L_N$ is a significant open problem
\footnote{On October 6, 2026 around 6 pm EDT,
OpenAI made public several preprints claiming, among other things, that the triangular Hilbert transform $H_2$ is bounded on $L^3\times L^3\to L^{3/2}$ and the trilinear Hilbert transform $L_3$ is bounded on $L^3\times L^3\times L^3\to L^1$.}.

This paper continues the research program on curved variants of these operators. The presence of curvature changes the nature of these operators and introduces oscillatory cancellation that can be quantified in terms of smoothing inequalities for corresponding averaging operators. The curved variant of $L_N$ is
\begin{equation*}
S_N (\mathbf{f})(x) = \pv \int_{{{\mathbb R}}} \prod_{n=1}^N f_n(x + t^n ) \frac{dt}{t},
\end{equation*}
initially defined for $N$-tuples $\mathbf{f} = (f_1,\dots,f_N)$ of Schwartz functions on ${{\mathbb R}}$.

As for $H_N$ and $L_N$, every Lebesgue space estimate for $\mathcal{T}_N$ implies the corresponding estimate for $S_N$. For $N=2$, the reduction is outlined in \cite[\S 5.2]{CDR21} adapting the argument in \cite[\S B.3]{KTZ15}, but it applies in arbitrary dimension $N \in {{\mathbb N}}$.

However, the simplex operators $H_N$ and $\mathcal{T}_N$ satisfy additional modulation symmetries due to their coordinatewise structure. For $N \geq 2$, each function $f_n$ in the definition of $H_N$ and $\mathcal{T}_N$ is translated only in one coordinate direction. Therefore, an arbitrary phase multiplying $f_n$ and independent of $x_n$ factors out of the integral. This symmetry is not present in the one-dimensional operators $L_N$ and $S_N$.

A foundational example of singular integral operators exhibiting such general modulation symmetries is the twisted paraproduct, which appears as a degenerate case of the two-dimensional bilinear Hilbert transform studied by Demeter and Thiele \cite{DT10}. Kova\v{c} \cite{Kov12} first proved Lebesgue space estimates for the twisted paraproduct.

For the curved one-dimensional operator, Li \cite{Li13} proved the estimate $L^2 \times L^2 \to L^1$ for $S_2$. Lie \cite{Lie15} gave a different proof for a broader class of curves. See also the results of Li and Xiao \cite{LX16} and Lie \cite{Lie18} for further Lebesgue space estimate. Hu and Lie \cite{HL23, HL25} showed Lebesgue space estimates for $S_3$, and Guo, Li, and Zhan \cite{GLZ26} proved Lebesgue space estimates for $S_N$, for arbitrary $N \geq 3$. The latter paper relies on the smoothing inequality of Kosz, Mirek, Peluse, Wan, and Wright \cite{KMPWW24}; also see \cite{BK25} for an application of that inequality to maximal function analogues. The smoothing inequality from \cite{KMPWW24} is also a main ingredient of our proof.

Lebesgue space estimates for $\mathcal{T}_2$ were first proved in \cite{CDR21}, also see \cite{HLS24} for an independent exposition. A main ingredient was a smoothing inequality for the associated averages. See \cite{HL26} for a generalization and alternative proof of this smoothing inequality, and see \cite{Chr25,Chr22,CZ24,DR24,Bou88} for more on closely related inequalities.

Our argument here follows the same general strategy as in \cite{CDR21}. The argument has three main components:

{\em 1. Reduction (\S \ref{sec:decomposition}).} A stationary phase analysis naturally splits the operator into dyadic frequency annuli in each variable. We only expect troublesome contributions from pieces involving stationary behavior.
In the two-dimensional setting of \cite{CDR21}, a simple dichotomy arises: either both frequencies are large (high-frequency case), or both of them are small (low-frequency case). In the present three-dimensional setting there are additional mixed-frequency cases that can also exhibit stationary behavior, which complicate the analysis somewhat.

{\em 2. Smoothing inequalities (\S \ref{sec:smoothing}).} The high-frequency contributions of the operator are estimated using smoothing inequalities providing appropriate decay. In this paper we use both, a three-dimensional and a two-dimensional smoothing inequality, see Theorem \ref{thm:smoothing} and Theorem \ref{thm:smoothing_2}.
The three-dimensional inequality is deduced from a special case of \cite[Thm. 6.1]{KMPWW24}. The two-dimensional inequality is a variant of \cite[Theorem 5]{CDR21} and is used to deal with certain mixed-frequency cases.

{\em 3. Anisotropic twisted paraproducts (\S \ref{sec:twisted}).} The third component deals with the low-frequency contributions to the operator. In essence, the low-frequency cutoffs smooth out the highly singular one-dimensional Hilbert kernel to a full-dimensional standard anisotropic Calder\'on--Zygmund kernel. This leads to multilinear oeprators
\begin{equation}\label{eqn:twisted}
\Gamma_N(\mathbf{f})(x) = \int_{{{\mathbb R}}^3} K(t) \prod_{n=1}^N f_n(x + t_n e_n)\,dt,
\end{equation}
where $K$ is an anisotropic Calder\'on--Zygmund kernel on ${{\mathbb R}}^N$.
When $K$ is isotropic and $N=2$, then $\Gamma_2$ is Kova\v{c}'s original twisted paraproduct \cite{Kov12}. When $K$ is anisotropic and $N=2$, then $\Gamma_2$ is the operator appearing in \cite{CDR21}.
Theorem \ref{thm:main_twist} proves the required Lebesgue space bounds for $\Gamma_3$ following arguments from \cite{CDR21}, also see \cite{Kov12}, \cite{Ber12}, \cite[\S 3]{DR21}.

In the higher-dimensional cases $N\ge 4$, the main obstruction lies in this third component: no bounds are currently known for the twisted paraproduct $\Gamma_N$ when $N\ge 4$.

\subsection{Smoothing inequalities}
For a compactly supported function $\psi \in \mathcal{C}^1({{\mathbb R}})$, we consider the averaging operator
\begin{equation*}
\mathcal{A}_\psi (\mathbf{f}) (x) = \int_{\mathbb{R}} \psi(t) \prod_{n=1}^3 f_n(x+t^n e_n) dt ,
\end{equation*}
initially defined for triples $\mathbf{f} = (f_1,f_2,f_3)$ of Schwartz functions on ${{\mathbb R}}^3$.
We have the following estimate.
\begin{thma} \label{thm:smoothing}
Let $\psi\in \mathcal{C}^1(\mathbb{R})$ with $\mathrm{supp} (\psi) \subset [a,b]$, where $a,b \in (0, \infty)$.
Let $p \in [1,\infty )$, $p_1,p_2,p_3\in (1,\infty)$ such that
\begin{equation*}
\frac{1}{p_1} + \frac{1}{p_2} + \frac{1}{p_3} = \frac{1}{p}.
\end{equation*}
Then there exists a constant $\delta = \delta(p, p_1,p_2,p_3) >0$ such that the following holds.

Let $\lambda\ge 1$ and suppose $\supp (\widehat{f_n}) \subseteq \{|\xi_n|\ge \lambda\}$ for at least one $n \in \{1,2,3\}$. Then, for every triple $\mathbf{f}$ of Schwartz functions,
\begin{equation}\label{eq:smoothing}
\|\mathcal{A}_{\psi}( \mathbf{f} )\|_{p}
\le C \|\psi'\|_1 \lambda^{-\delta}\prod_{n=1}^3 \|f_n\|_{p_n},
\end{equation}
where $C\in (0,\infty)$ depends only on $a,b,p,p_1,p_2,p_3$.
\end{thma}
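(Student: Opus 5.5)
The plan is to deduce Theorem \ref{thm:smoothing} from a special case of \cite[Thm.~6.1]{KMPWW24}, in three steps: a local $L^1$ estimate with a power gain, a reduction from $\mathcal{C}^1$ cutoffs to smooth ones, and interpolation against trivial bounds to reach the full range of exponents.

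\emph{Step 1: the base estimate.} Use the coordinatewise structure: $f_n$ is translated only in the direction $e_n$. Freezing the two remaining coordinates turns $\mathcal{A}_\psi$ into a trilinear curved average with polynomials $t, t^2, t^3$ acting on separate variables. For a smooth bump $\psi$ supported in $[a,b]$, a special case of \cite[Thm.~6.1]{KMPWW24} should give
\begin{equation*}
\Bigl|\int_{{{\mathbb R}}^3}\int_{{\mathbb R}} \psi(t)\prod_{n=1}^3 f_n(x+t^ne_n)\,dt\,\phi(x)\,dx\Bigr| \lesssim \lambda^{-\delta_0}\,\|\phi\|_\infty\prod_{n=1}^3\|f_n\|_{\infty},
\end{equation*}
under a frequency restriction $|\xi_n|\ge\lambda$ on one $f_n$, after localizing $x$ to a unit cube. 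The KMPWW inequality is stated in one dimension, so I plan to transfer it by Fubini together with the modulation and translation invariance in the inactive variables. Summing the localized estimates over unit cubes then gives the $L^{p_1}\times L^{p_2}\times L^{p_3}\to L^p$ bound at one exponent tuple, for example $p=1$ with H\"older-type exponents. I expect this step to be the main obstacle: matching the hypotheses of \cite[Thm.~6.1]{KMPWW24}, with its normalizations, its condition that the polynomials have distinct degrees, and its scale $\lambda$, to the present anisotropic three-dimensional setting with a frequency condition on only one function. If the direct transfer does not work, I would first try a $TT^*$/Cauchy--Schwarz reduction that removes the two functions without frequency restriction, leaving a degree-lowering (PET-type) estimate.

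\emph{Step 2: from smooth to $\mathcal{C}^1$ cutoffs.} The constant in the statement is $\|\psi'\|_1$, which points to a decomposition argument. Write
\begin{equation*}
\psi(t)=-\int_t^\infty\psi'(s)\,ds=-\int\psi'(s)\,\mathbf 1_{[a,s]}(t)\,ds
\end{equation*}
on the support, and approximate each indicator $\mathbf 1_{[a,s]}$ by smooth bumps at scale $\lambda^{-\epsilon}$. The smooth part is handled by Step 1, which costs $\lambda^{O(\epsilon)}$ derivatives. The error is supported on an interval of length $\lambda^{-\epsilon}$ and is controlled by the trivial H\"older bound, since each $t\mapsto f_n(x+t^ne_n)$ is a change of variables with bounded Jacobian on $[a,b]$. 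Choosing $\epsilon$ small balances the two contributions and keeps a gain $\lambda^{-\delta}$. Minkowski's inequality in $s$ then produces the factor $\|\psi'\|_1$.

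\emph{Step 3: interpolation.} For the full range, interpolate the decay estimate against the trivial bound $\|\mathcal{A}_\psi(\mathbf f)\|_p\lesssim\prod\|f_n\|_{p_n}$, valid for all exponents with $p\ge1$, using multilinear interpolation. The frequency condition on $f_n$ is preserved by applying a Fourier projection $P_{\ge\lambda}$, which is bounded on $L^{p_n}$ for $1<p_n<\infty$. This yields a positive $\delta$ depending on $p,p_1,p_2,p_3$ throughout the stated range.
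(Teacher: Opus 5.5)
There is a genuine gap, and it sits in your Step 1, which carries the entire content of the theorem. The transfer you describe cannot work. Freezing two coordinates does not decouple $\mathcal{A}_\psi$, because every $f_n$ depends on all three variables and is translated along a different one. For fixed $(x_2,x_3)$ the fiber still contains $f_2(x_1,x_2+t^2,x_3)$ and $f_3(x_1,x_2,x_3+t^3)$, which are genuinely functions of two variables.

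In fact the implication runs the other way. Bounds for the coordinatewise three-dimensional average imply bounds for the one-dimensional average $\int\psi(t)\prod_n h_n(y+t^n)\,dt$, by taking $f_n(x)=h_n(x_1+x_2+x_3)$ times cutoffs. The three-dimensional operator also has extra modulation symmetries (any phase independent of $x_n$ multiplying $f_n$ factors out) that a one-dimensional inequality does not capture. Moreover, local $L^\infty$ bounds on unit cubes, summed over cubes, only control an amalgam quantity like $\sum_Q\prod_n\|f_n\|_{L^\infty(Q^*)}$, not $\prod_n\|f_n\|_{p_n}$. Your $TT^*$/PET fallback amounts to re-proving the smoothing inequality itself.

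What resolves this is that Theorem~\ref{thm:KMPWW} is already a statement on $\mathbb{R}^3$. It concerns the coordinatewise sharp averages $A_N(\mathbf F)(x)=N^{-1}\int_0^N\prod_n F_n(x+t^ne_n)\,dt$, holds in the full range $p\in[1,\infty)$, $p_n\in(1,\infty)$, and assumes that $\widehat{F_n}$ vanishes on $\{|\xi_n|\le N^{C-n}\}$ for some $n$. The idea you are missing is how to match its scale: rescale anisotropically with $D_\lambda x=(\lambda x_1,\lambda^2x_2,\lambda^3x_3)$. This gives $\mathcal{A}_\psi(\mathbf f)(x)=\mathcal{A}_{\psi_\lambda}(D_\lambda\mathbf f)(D_\lambda x)$ with $\psi_\lambda(t)=\lambda^{-1}\psi(t/\lambda)$ supported in $[a\lambda,b\lambda]$. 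The condition $|\xi_n|\ge\lambda$ becomes $|\xi_n|\ge\lambda^{1-n}$, which matches $N^{1-n}$ for $N=t\in[a\lambda,b\lambda]$, i.e.\ $C=1$. By \eqref{eq:homogeneity} the rescaling costs nothing in the norms.

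Your Step 2 decomposition, used with the exact indicators and no smoothing, is then precisely the paper's integration by parts: $\mathcal{A}_{\psi_\lambda}(D_\lambda\mathbf f)=-\int_0^\infty t\psi_\lambda'(t)\,A_t(D_\lambda\mathbf f)\,dt$. Minkowski's inequality gives the factor $\|t\psi_\lambda'\|_1=\|t\psi'\|_1\le b\|\psi'\|_1$ times $\sup_{t\in[a\lambda,b\lambda]}\|A_t(D_\lambda\mathbf f)\|_p\lesssim\lambda^{-\delta}\prod_n\|D_\lambda f_n\|_{p_n}$. Since Theorem~\ref{thm:KMPWW} handles sharp cutoffs and all exponents directly, neither the bump approximation in your Step 2 nor the interpolation in your Step 3 is needed. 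The remaining case $\lambda<1/a$, where $N\ge1$ fails, is trivial by Minkowski and H\"older.
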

We derive this estimate from the general smoothing estimates of Kosz, Mirek, Peluse, Wan and Wright \cite[Thm. 6.1]{KMPWW24}.
We also need a version of this result for a bilinear averaging operator along curves defined by arbitrary monomials, with an explicit dependence of the multiplicative constant on the cutoff function. For our argument, only the $L^2 \times L^2 \to L^1$ estimate is required.

Let $\alpha_1, \alpha_2$ be positive distinct integers. For a compactly supported function $\psi \in \mathcal{C}^1({{\mathbb R}})$, we consider the averaging operator
\begin{equation*}
\mathbf{A}_\psi (\mathbf{g}) (x) = \int_{\mathbb{R}} \psi(t) \prod_{n=1}^2 g_n(x+t^{\alpha_n} e_n) dt ,
\end{equation*}
intially defined for pairs $\mathbf{g} = (g_1,g_2)$ of Schwartz functions on ${{\mathbb R}}^2$.
\begin{thma} \label{thm:smoothing_2}
Let $\psi\in \mathcal{C}^1(\mathbb{R})$ with $\mathrm{supp} (\psi) \subset [a,b]$, where $a,b \in (0, \infty)$.
Then there exists an absolute constant $\delta>0$ such that the following holds.

Let $\lambda\ge 1$ and suppose $\supp (\widehat{g_n}) \subseteq \{|\xi_n|\ge \lambda\}$ for at least one $n \in \{1,2\}$. Then for every pair $\mathbf{g}$ of Schwartz functions,
\begin{equation*}
\|\mathbf{A}_{\psi}( \mathbf{g} )\|_{1}
\le C \| \psi' \|_{1} \lambda^{-\delta} \|g_1\|_{2} \|g_2\|_{2},
\end{equation*}
where $C\in (0,\infty)$ is a constant depending only on $a,b,\alpha_1,\alpha_2$.
\end{thma}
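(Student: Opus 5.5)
We plan to prove Theorem \ref{thm:smoothing_2} in three steps: reduce to a smooth cutoff, prove a dualized $L^2$ decay estimate by a $TT^*$/van der Corput argument in the spirit of \cite[Theorem 5]{CDR21}, and then recover the dependence on $\|\psi'\|_1$. Throughout we assume by symmetry that $\widehat{g_1}$ is supported in $\{|\xi_1|\ge\lambda\}$; the other case is analogous with the roles of $\alpha_1,\alpha_2$ swapped.

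\textbf{Reduction to a fixed smooth cutoff.} Since $\psi$ is supported in $[a,b]$ and vanishes at $a$, we can write $\psi(t)=\int_a^t\psi'(s)\,ds=\int_a^b \psi'(s)\mathbbm{1}_{[s,b]}(t)\,ds$. Minkowski's inequality then reduces matters to proving the estimate uniformly for the operators with $\psi$ replaced by the indicator $\mathbbm{1}_{[s,b]}$, $s\in[a,b]$. Indicators are rough, so we instead decompose $[s,b]$ smoothly into at most $O(\lambda^{\epsilon})$ smooth bumps of width $\lambda^{-\epsilon}$ plus two boundary pieces of length $\lambda^{-\epsilon}$. The boundary pieces are bounded trivially by $\lambda^{-\epsilon}\|g_1\|_2\|g_2\|_2$, using Cauchy--Schwarz in $x$ and the measure-preserving changes of variables. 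This leaves the main task: show $\|\mathbf{A}_\varphi(\mathbf{g})\|_1\lesssim \lambda^{-\delta'}\|g_1\|_2\|g_2\|_2$ for smooth $\varphi$ supported in a subinterval of $[a,b]$ of length $\ge\lambda^{-\epsilon}$, with $C^k$ norms at most $\lambda^{k\epsilon}$. Choosing $\epsilon\ll\delta'$ and summing the bumps then gives the result.

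\textbf{Main estimate.} We dualize $L^1$ against $h\in L^\infty$ and consider
\[\Lambda(g_1,g_2,h)=\int\!\!\int g_1(x+t^{\alpha_1}e_1)g_2(x+t^{\alpha_2}e_2)h(x)\varphi(t)\,dt\,dx.\]
Changing variables $x\mapsto x-t^{\alpha_2}e_2$ makes $g_2$ independent of $t$. Cauchy--Schwarz in $x$ then removes $g_2$ and bounds $|\Lambda|^2$ by $\|g_2\|_2^2$ times
\[\int\Big|\int g_1(x+t^{\alpha_1}e_1)h(x-t^{\alpha_2}e_2)\varphi(t)\,dt\Big|^2dx.\]
Expanding the square in $t,s$ and taking the Fourier transform in $x_1$ (fixing $x_2$), the $g_1$-part contributes the oscillatory factor $e^{i\xi_1(t^{\alpha_1}-s^{\alpha_1})}$ with $|\xi_1|\ge\lambda$. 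The function $h$, however, is only bounded, so the argument must exploit the $x_2$ shifts $t^{\alpha_2}-s^{\alpha_2}$ coupled with the $x_1$ oscillation. Rather than use $L^\infty$ directly, we follow the route of \cite{CDR21}: localize $x$ to unit cubes and reduce to $h\in L^2$ of a cube, where the form is $L^2\times L^2\times L^2$ locally.

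We then apply a second Cauchy--Schwarz/$TT^*$ step in the variable $x_2$, so that $h$ also disappears. This leaves an oscillatory integral in $(t,s,t',s')$ with phase
\[\xi_1\big(t^{\alpha_1}-s^{\alpha_1}-t'^{\alpha_1}+s'^{\alpha_1}\big)\]
restricted to the variety $t^{\alpha_2}-s^{\alpha_2}=t'^{\alpha_2}-s'^{\alpha_2}$. Since $\alpha_1\ne\alpha_2$ and $t,s\in[a,b]\subset(0,\infty)$, the restricted phase has a nonvanishing derivative (the monomials $t^{\alpha_1}$, $t^{\alpha_2}$ are non-degenerate) away from the diagonal set $\{t=s\}\cup\{t=t'\}$. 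A neighborhood of size $\lambda^{-\epsilon}$ of that set is handled trivially, and van der Corput on the complement gives decay $\lambda^{-c}$.

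\textbf{Main obstacle.} The hardest step will be the degeneracy analysis near the diagonals, together with making sure that the local $L^2$ reduction for $h$ loses only constants. We would first try to parametrize $s'$ via the implicit function theorem on the variety, and check that the Hessian-type quantity involving $\alpha_1(\alpha_1-\alpha_2)$ is bounded below by a power of the distance to the diagonal on $[a,b]$.
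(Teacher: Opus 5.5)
Your first step is fine and matches the paper's reduction. Writing $\psi=\int_a^b\psi'(s)\mathbf{1}_{[s,b]}\,ds$ is the integration by parts used for Theorem~\ref{thm:smoothing}, and it produces the factor $\|\psi'\|_1$. After that the paper runs no $TT^*$ argument: it defers to the proof of \cite[Theorem 5]{CDR21}, or to \cite[Thm.~6.1]{KMPWW24} after rescaling, as for Theorem~\ref{thm:smoothing}.

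Your main estimate has a genuine gap: the local $L^2\times L^2\times L^2$ inequality you reduce to is false. Take the following ingredients:
\begin{itemize}
\item a bump $\chi_0\ge 0$ supported in $[-c,c]$ with $c$ small;
\item a bump $\chi_A\ge 0$;
\item a real $\chi_B$ with compactly supported Fourier transform and $\chi_B\gtrsim 1$ on $\supp\chi_A+[a^{\alpha_1},b^{\alpha_1}]$;
\item a bump $\tilde\chi$ supported in $(0,\infty)$ and equal to $1$ near $[a^{\alpha_2},b^{\alpha_2}]$.
\end{itemize}
Set
\[
h(x)=\lambda^{1/2}\chi_0(\lambda x_2)\chi_A(x_1),\quad g_1(y)=\lambda^{1/2}\chi_0(\lambda y_2)\chi_B(y_1)e^{2\pi i\lambda y_1},\quad g_2(y)=\chi_A(y_1)\tilde\chi(y_2)e^{-2\pi i\lambda(y_1+y_2^{\alpha_1/\alpha_2})}.
\]
All three functions have $L^2$ norm $\sim 1$, $h$ lives in a fixed compact set, and $\supp\widehat{g_1}\subseteq\{|\xi_1|\ge\lambda/2\}$. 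On $\supp h$ one has $|x_2|\le c\lambda^{-1}$. So for $t\in[a,b]$ the phase of $g_1(x+t^{\alpha_1}e_1)g_2(x+t^{\alpha_2}e_2)$ is $2\pi\lambda\big(t^{\alpha_1}-(x_2+t^{\alpha_2})^{\alpha_1/\alpha_2}\big)=O(c)$. Hence $|\Lambda(g_1,g_2,h)|\gtrsim 1$ uniformly in $\lambda$. This does not contradict the theorem, since for this pair $\|\mathbf{A}_\varphi(g_1,g_2)\|_1\sim\lambda^{-1/2}$. The decay is visible in $L^1$ but not in local $L^2$: it depends on $h$ being bounded, not merely square integrable.

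The same $g_1$ shows that your second Cauchy--Schwarz step cannot yield decay even if you keep $h\in L^\infty$. First a correction: after $x\mapsto x-t^{\alpha_2}e_2$, the argument of $g_1$ is $x+t^{\alpha_1}e_1-t^{\alpha_2}e_2$. This $e_2$-shift is what produces your variety $t^{\alpha_2}-s^{\alpha_2}=t'^{\alpha_2}-s'^{\alpha_2}$.

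The quadrilinear form left after eliminating $h$ depends on $g_1$ alone, and here it is $\gtrsim\|g_1\|_2^4$. Since $g_1$ lives on a horizontal strip of width $\sim\lambda^{-1}$, only $|t^{\alpha_2}-s^{\alpha_2}|\lesssim\lambda^{-1}$ contributes. That is exactly the near-diagonal region where $\lambda(t^{\alpha_1}-s^{\alpha_1}-t'^{\alpha_1}+s'^{\alpha_1})=O(1)$. So your claim that a $\lambda^{-\epsilon}$-neighbourhood of the diagonal ``is handled trivially'' fails for $L^2$ inputs; bounding that piece by the measure of the region needs $L^\infty$ control of $g_1$.

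Separately, for general $g_1$ the four copies (two at height $x_2$, two at height $x_2+\tau$) produce a three-parameter family of $x_1$-frequencies, not a single $\xi_1$. The phase is $\xi t^{\alpha_1}-\zeta s^{\alpha_1}-\eta t'^{\alpha_1}+(\eta+\zeta-\xi)s'^{\alpha_1}$, so a pointwise van der Corput bound would not close in terms of $L^2$ norms anyway.

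What is missing is a mechanism that genuinely exploits the boundedness of the dual function. One option is to prove decay first for bounded inputs, where concentration is impossible and the near-diagonal pieces really are small. You would then pass to $L^2\times L^2\to L^1$ by interpolation with local $L^p$-improving bounds. The simplest repair is to feed the averages from your first step into \cite[Thm.~6.1]{KMPWW24}, as the paper indicates.
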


\subsection{Anisotropic twisted paraproduct}
The second ingredient is a Lebesgue space estimate for an anisotropic twisted paraproduct.

Let $\alpha=(\alpha_1,\alpha_2,\alpha_3)$ be a triple of positive integers. Let $K \in \mathcal{S}'({{\mathbb R}}^3)$ be a tempered distribution such that $\widehat{K}$ is a smooth function on ${{\mathbb R}}^3\setminus\{0\}$ satisfying
\begin{equation} \label{eq:symbol_condition}
|\partial_\xi^k\widehat{K}(\xi)| \le \Big( \sum_{n=1}^3 |\xi_n|^{1/\alpha_n} \Big)^{-\alpha\cdot k}
\end{equation}
for all multi-indices $k=(k_1,k_2,k_3)$ up to a large finite order, say $|k|\le 110$. Recall the definition of $\Gamma_3$ from \eqref{eqn:twisted}.

\begin{thma} \label{thm:main_twist}
Let $p \in (1, \tfrac{4}{3} )$, $p_1,p_2,p_3\in (1,4)$ such that
\begin{equation*}
\frac{1}{p_1} + \frac{1}{p_2} + \frac{1}{p_3} = \frac{1}{p}.
\end{equation*}
Then there exists a constant $C\in(0,\infty)$ such that for every triple $\mathbf{f}$ of Schwartz functions,
\begin{equation*}
\|\Gamma_3(\mathbf{f})\|_{p} \le C\prod_{n=1}^3 \|f_n\|_{p_n}.
\end{equation*}
\end{thma}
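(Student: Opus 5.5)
We would follow the strategy of \cite{Kov12} and \cite{CDR21}, adapted to three dimensions and anisotropic scaling. The first step is a cone decomposition of the kernel. Using \eqref{eq:symbol_condition}, we write $K$ as a sum over anisotropic dyadic scales $2^{-j}$, $j\in\mathbb{Z}$, i.e.\ frequency annuli $\sum_n|\xi_n|^{1/\alpha_n}\sim 2^{j}$. Within each annulus at least one coordinate satisfies $|\xi_n|\sim 2^{\alpha_n j}$. By smooth partitions of unity and Fourier expansion in the remaining variables (using the $110$ derivatives to get summable coefficients), $K$ becomes a rapidly convergent superposition of model kernels
\begin{equation*}
\sum_{j\in\mathbb{Z}} \bigotimes_{n=1}^3 \vartheta^{(n)}_{2^{-\alpha_n j}}(t_n),
\end{equation*}
where each $\vartheta^{(n)}$ is either a Schwartz bump $\varphi$ or a mean-zero function $\psi$, and at least one factor is of type $\psi$. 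Dilations are $L^1$-normalized at scale $2^{-\alpha_n j}$. It then suffices to prove the bound uniformly, with polynomial dependence on the modulation parameters, for each model form.

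The model forms are sorted by the number of $\psi$ factors. If all three factors are $\psi$, or exactly two are, the form is controlled directly by Cauchy--Schwarz and square functions. The $\varphi$ factor contributes a maximal function, and the two $\psi$ factors give one-directional Littlewood--Paley square functions. We then use vector-valued and Fefferman--Stein bounds, bounding products of square functions and maximal functions in $L^{p_n}$; the anisotropy is harmless because each function is only averaged in its own direction. The genuinely twisted case is exactly one $\psi$, say in direction $1$. Since $\varphi_{2^{-\alpha_1 j}}*\psi_{2^{-\alpha_1 j}}$-type telescoping identities hold, we write $\psi\otimes\varphi\otimes\varphi$ as a difference $\partial_j$ of $\varphi\otimes\varphi\otimes\varphi$ terms plus terms with more $\psi$'s. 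This is the standard telescoping of Kova\v{c}.

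For the remaining form we dualize with $f_4$ and proceed as in \cite{Kov12}, \cite{DR21}. Each of $f_1,f_2,f_3$ is independent of one coordinate in the four-linear form $\int \Gamma_3(\mathbf f) f_4$. Applying Cauchy--Schwarz in the free variables, we pass from the $4$-linear form to forms with entangled pairs of functions. These are then handled by repeated telescoping (integration by parts in $j$). The outcome is a positivity-based \emph{local} $L^4$ estimate of the shape
\begin{equation*}
|\Lambda(f_1,f_2,f_3,f_4)|\lesssim \prod_{n=1}^{4}\|f_n\|_{4},
\end{equation*}
together with its localized versions on anisotropic dyadic trees. The Gowers--box type structure in three variables will need two rounds of Cauchy--Schwarz, each dealing with one pair of coordinates, following the three-dimensional tree argument of \cite[\S 3]{DR21}.

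From the local estimates we obtain restricted-type bounds in the stated range. Following \cite{Kov12}, \cite{Ber12}, we run a stopping-time or Calder\'on--Zygmund decomposition on anisotropic (parabolic-type) dyadic rectangles adapted to $\alpha$. Using the local $L^4$ bounds and multilinear interpolation then yields the full range $p_n\in(1,4)$ and $p\in(1,4/3)$. The range is dictated by the $L^4$ exponent at the endpoint, and the cone decomposition's error terms are summable by the symbol decay.

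The main obstacle we expect is the one-$\psi$ twisted case: organizing the Cauchy--Schwarz and telescoping steps so that every boundary term is positive or reducible to already controlled square-function forms. This must be done with three functions, each missing a different coordinate, and under anisotropic scaling, where the dyadic tree structure must be handled carefully. Our first attempt would be to reduce to the two-dimensional twisted paraproduct of \cite{CDR21} by freezing one variable. If that fails, we would follow the box-norm positivity argument of \cite{DR21}.
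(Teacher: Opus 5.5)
Your cone decomposition, your handling of the forms with two or three cancellative factors, and your final Calder\'on--Zygmund/interpolation step are reasonable. The genuine gap is that the tree estimate you aim for is the wrong one, and with it the argument does not close. Consider a local bound $|\Lambda_{\mathcal T}(f)|\lesssim |Q_{\mathcal T}|\prod_{n}\mathcal M_{\mathcal T,4}(f_n)$ with $L^4$ sizes for all four functions. It cannot be summed over the stopping-time trees. The tree-counting argument (Lemma \ref{lem:abstract-stopping-time}) needs $s_j<q_j$ for every function, i.e.\ all four exponents $>4$, and this is incompatible with $\sum_{n=0}^3 1/p_n=1$. At the endpoint $q_j=s_j=4$ the level-set counting of the tree tops breaks down. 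So you get no global bound on any open set of exponents. The point $(4,4,4,4)$ is only a vertex of the closure of the theorem's range, and nothing in your outline gives strong type there. The Calder\'on--Zygmund decomposition and interpolation therefore have nothing to start from.

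The missing idea is an asymmetric first Cauchy--Schwarz. In the cone where $f_3$ carries the cancellative kernel $\h*\h*\psi$, one separates $f_3$ from $(f_0,f_1,f_2)$ (Lemma \ref{lem:lambda12-cs}). The $f_3$ factor is a one-directional square function. On a tree it is bounded by $|Q_{\mathcal T}|\mathcal M_{\mathcal T,2}(f_3)^2$, via the edge telescoping estimate (Proposition \ref{prop:f3bd}, using the improved edge Brascamp--Lieb bound). Globally it is bounded by $\|f_3\|_2^2$ via Plancherel. Only the $(f_0,f_1,f_2)$ factor, a cube form with $F_{(1,1,j_3)}=1$, goes through the positivity/telescoping argument and costs $L^4$ sizes. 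The local exponents $(4,4,4,2)$ then give the nonempty range $p_0,p_1,p_2>4$, $p_3\ge 2$ of Proposition \ref{prop:twisted}. The fiber-wise Calder\'on--Zygmund decomposition extends this to Proposition \ref{prop:fwcz}, whose range contains that of the theorem.

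Relatedly, your telescoping step is misdescribed. The heat-equation identity $\sum_i\alpha_i\Theta^{(i)}=\mathcal R$ moves the cancellative factor to the other coordinates; it does not produce terms with more cancellative factors. Applied to the original signed form, it therefore does not reduce the one-$\psi$ case to the easy ones. It is effective only on the nonnegative cube forms produced by Cauchy--Schwarz (Lemma \ref{lem:cs}), where terms of known sign can be discarded, as in Proposition \ref{prop:fullcubetree}.
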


\subsection*{Notation}
The Fourier transform of a function $f$ will be denoted by
\begin{equation*}
\widehat{f}(\xi) = \int_{{{\mathbb R}}^d} f(x) e^{-2\pi i x\cdot \xi} dx.
\end{equation*}

The letters $\delta$ and $C$ are reserved for constants that may change from line to line. Dependence of the constants on various parameters will be understood from context or made explicit appropriately.

We write $A\lesssim B$ to denote existence of a constant $C$ such that $A\le C B$.

For a function $f$ on ${{\mathbb R}}^d$, a function $\varphi$ on ${{\mathbb R}}$, and $n \in \{1, \dots, d\}$, we define the partial convolution
\begin{equation*}
(f*_n \varphi) (x) = \int_{{\mathbb R}} f(x- t e_n) \varphi(t) dt.
\end{equation*}

$L^p$ norms will be denoted by $\|f\|_p$, $\|f\|_{L^p}$, or $\|f\|_{L^p({{\mathbb R}}^d)}$, depending on context.

\subsection*{Statement on use of AI}
The results in this paper have been obtained by the authors, without the use of AI. All the mathematics and all of the writing in this paper are due to the authors.

AI coding agents were used to formally verify the result in the proof assistant Lean 4 \cite{LeanPaper} and using its mathematical library Mathlib \cite{Mathlib}. The formalization is available on \href{https://github.com/roos-j/lean-dfr}{GitHub}.

\subsection*{Acknowledgements}
This work was supported in part by NSF grants DMS-2554859 (P.D.), DMS-2555784 (J.R.), and grants from the Simons Foundation. M.F. was supported in part by an AMS-Simons Travel Grant.

\section{Reduction} \label{sec:decomposition}
In this section we follow the decomposition of the operator performed in \cite[Section 2]{CDR21}. In this way, we reduce the proof of estimates for $\mathcal{T}_3$, Theorem \ref{thm:main}, to smoothing inequalities for averages along curves, Theorem \ref{thm:smoothing} and Theorem \ref{thm:smoothing_2}, and the boundedness of an anisotropic twisted paraproduct, Theorem \ref{thm:main_twist}.

\subsection{Littlewood--Paley decompositions}\label{sec:stationary}
Let $\varphi$ be a smooth even function on ${{\mathbb R}}$ such that
\begin{equation*}
\mathbf{1}_{\{|\zeta|\le 1\}} (\zeta) \leq \varphi (\zeta) \leq \mathbf{1}_{\{|\zeta|\le 2\}} (\zeta).
\end{equation*}
Let $\psi$ be the function on ${{\mathbb R}}$ defined by
\begin{equation*}
\psi(\zeta) = \varphi(\zeta) - \varphi(2 \zeta),
\end{equation*}
and, for every $j\in {{\mathbb Z}}$, let $\varphi_j$ and $\psi_j$ be the functions on ${{\mathbb R}}$ defined by
\begin{equation*}
\varphi_j(\zeta) = \varphi(2^{-j} \zeta), \qquad \psi_j(\zeta) = \psi(2^{-j} \zeta) = \varphi_j(\zeta) - \varphi_{j-1}(\zeta).
\end{equation*}
Then $\psi_j$ is supported in $\{2^{j-1}\le |\zeta|\le 2^{j+1}\}$ and, for every $\zeta \neq 0$,
\begin{equation*}
\sum_{j\in{{\mathbb Z}}} \psi_j(\zeta) = 1 .
\end{equation*}
For all $\ell \in \{1,2,3\}$ and $j \in {{\mathbb Z}}$, we define the partial Littlewood--Paley operators $\Delta^{(\ell)}_j$ and corresponding partial sums $S_j^{(\ell)}$ by
\begin{equation*}
\widehat{\Delta^{(\ell)}_j f} (\xi) = \psi_j(\xi_\ell) \widehat{f}(\xi),\qquad \widehat{S^{(\ell)}_j f} (\xi) = \varphi_j(\xi_\ell) \widehat{f}(\xi),
\end{equation*}
where $\xi=(\xi_1,\xi_2,\xi_3)\in{{\mathbb R}}^3$.

Next, let $\widetilde{\psi}$ be a smooth even function on ${{\mathbb R}}$ such that
\begin{equation*}
\mathbf{1}_{\{\tfrac{1}{2} \le |\zeta|\le 1\}} (\zeta) \leq \widetilde{\psi} (\zeta) \leq \mathbf{1}_{\{\tfrac{1}{2} - \tfrac{1}{100} \le |\zeta|\le 1 + \tfrac{1}{100} \}} (\zeta),
\end{equation*}
and, for every $j\in {{\mathbb Z}}$, let $\widetilde{\psi}_j$ be the function on ${{\mathbb R}}$ be defined by
\begin{equation*}
\widetilde{\psi}_j(\zeta) = \widetilde{\psi}(2^{-j} \zeta).
\end{equation*}
In particular, $\widetilde{\psi}_j \psi_j = \psi_j$, hence $\Delta_j \widetilde{\Delta}_j = \widetilde{\Delta}_j \Delta_j = \Delta_j$.

We decompose the operator $\mathcal{T}_3$ in multiple ways. First, the dilation invariance motivates the decomposition by localizing the kernel at different scales, namely $\mathcal{T}_3=\sum_{j\in{{\mathbb Z}}} T_j$, where
\begin{equation*}
T_j(\mathbf{f})(x) = \int_{{{\mathbb R}}} \psi(2^{j} t) \prod_{n=1}^3 f_n(x + t^n e_n) \frac{dt}{t}.
\end{equation*}

Next, standard stationary phase considerations motivate the decomposition by localizing the functions in frequency at different scales, namely
\begin{equation}\label{eqn:basicfreqdecomp}
T = T^{\mathrm{L}} + T^{\mathrm{ML}} + T^{\mathrm{MH}} + T^{\mathrm{H}},
\end{equation}
where the four components represent the {\em low} (L), {\em mixed} (ML and MH) and {\em high} (H) frequency contributions, respectively.
In details, for every $k=(k_1,k_2,k_3)\in {{\mathbb Z}}^3$, let $k^{(1)}, k^{(2)}, k^{(3)}$ denote the components sorted in descending order, so that $k^{(1)}\ge k^{(2)} \ge k^{(3)}$. Then ${{\mathbb Z}}^3=\mathfrak{F}_\mathrm{L} \cup \mathfrak{F}_\mathrm{ML} \cup \mathfrak{F}_\mathrm{MH} \cup \mathfrak{F}_\mathrm{H}$ where
\begin{align*}
& \mathfrak{F}_\mathrm{L} = \{ k\in{{\mathbb Z}}^3 \colon k^{(1)}\le 0 \}, \\
& \mathfrak{F}_\mathrm{ML} = \{ k\in{{\mathbb Z}}^3 \colon k^{(1)}> 0, k^{(1)}-k^{(2)} > 100 \},
\\
& \mathfrak{F}_\mathrm{MH} = \{ k\in{{\mathbb Z}}^3 \colon k^{(1)}> 0, k^{(1)}-k^{(2)} \leq 100, k^{(1)}-k^{(3)} > 100 \}, \\
& \mathfrak{F}_\mathrm{H} = \{ k\in{{\mathbb Z}}^3 \colon k^{(1)}> 0, k^{(1)}-k^{(3)} \le 100 \}.
\end{align*}
For every $\omega\in\{\mathrm{L},\mathrm{ML},\mathrm{MH},\mathrm{H}\}$, we write
\begin{equation*}
T^\omega = \sum_{j\in{{\mathbb Z}}} T^\omega_j,\qquad T_j^\omega (\mathbf{f}) = \sum_{k \in \mathfrak{F}_\omega} T_j(\Delta^{(1)}_{j+k_1} f_1, \Delta^{(2)}_{2j+k_2} f_2, \Delta^{(3)}_{3j+k_3} f_3).
\end{equation*}
We split further $\mathfrak{F}_\mathrm{ML} =\mathfrak{F}_{\mathrm{ML},1}\cup \mathfrak{F}_{\mathrm{ML},2}\cup\mathfrak{F}_{\mathrm{ML},3}$,
where, for every $n=1,2,3$, we define
\begin{equation*}
\mathfrak{F}_{\mathrm{ML},n} = \{k\in \mathfrak{F}_{\mathrm{ML}} \colon k^{(1)} = k_n \}.
\end{equation*}
We also split further $\mathfrak{F}_\mathrm{MH} = \mathfrak{F}_\mathrm{MH,1} \cup \mathfrak{F}_\mathrm{MH,2} \cup \mathfrak{F}_\mathrm{MH,3}$,
where, for every $n=1,2,3$, we define
\begin{equation*}
\mathfrak{F}_{\mathrm{MH},n} = \{k\in \mathfrak{F}_{\mathrm{MH}} \colon k^{(3)} = k_n \}.
\end{equation*}

Finally, we decompose $T = \sum_{k \in {{\mathbb Z}}^3} T^{(k)}$, where
\begin{equation} \label{eq:decomposition_k}
T^{(k)}(\mathbf{f}) = \sum_{j \in {{\mathbb Z}}} T_j(\Delta^{(1)}_{j+k_1} f_1, \Delta^{(2)}_{2j+k_2} f_2, \Delta^{(3)}_{3j+k_3} f_3).
\end{equation}

The analysis of the low frequency $T^{\mathrm{L}}$, mixed frequency $T^{\mathrm{ML}}$, and high frequency $T^{\mathrm{H}}$ components is analogous to that carried out in \cite{CDR21}. Estimates for these components are proved in \S \ref{subsec:low}, \S \ref{subsec:mixedlow}, and \S \ref{subsec:high} respectively. We briefly recall that $T^{\mathrm{H}}$ relies on the smoothing effect of multilinear averages along curves for functions with frequency support away from the origin.
Instead, $T^{\mathrm{L}}$ is treated as a three-dimensional anisotropic variant of the twisted paraproduct \cite{Kov12}. Furthermore, relying on the rapid decay from non-stationary phase, $T^{\mathrm{ML}}$ is also reduced to a similar twisted paraproduct. It is worth noting that in the last two cases the range of exponents $p$ and $p_n$ is not the full Banach range, due to the constraints in Theorem \ref{thm:main_twist}.

Finally, the mixed frequency component $T^{\mathrm{MH}}$ is new. Its analysis is similar to that of $T^{\mathrm{H}}$ and it is carried out in \S \ref{subsec:mixedhigh}. In particular, it relies on a fiberwise application of the smoothing effect of multilinear averages along curves described in \cite{CDR21}.

\subsection{Low frequencies \texorpdfstring{$\mathrm{L}$}{}} \label{subsec:low}

We have
\begin{equation*}
T^{\mathrm{L}} (\mathbf{f}) = \sum_{j \in {{\mathbb Z}}} T_j ( f_1 \ast_1 \widecheck{\varphi_{j}}, f_2 \ast_2 \widecheck{\varphi_{2j}}, f_3 \ast_3 \widecheck{\varphi_{3j}}).
\end{equation*}
Expanding the convolutions, we obtain the operator
\begin{equation*}
\int_{{{\mathbb R}}^3} f_1 (u,y,z) f_2 (x,v,z) f_3 (x,y,w) K_{\mathrm{L}} ( x-u, y-v, z-w ) du dv dw,
\end{equation*}
where the kernel $K_{\mathrm{L}}$ is defined by
\begin{equation*}
K_{\mathrm{L}} (u,v,w) = \sum_{j \in {{\mathbb Z}}} 2^{6j} \kappa_{\mathrm{L}} (2^{j} u, 2^{2j} v, 2^{3j} w) ,
\end{equation*}
for the function
\begin{equation*}
\kappa_{\mathrm{L}} (u,v,w) = \int_{{{\mathbb R}}} \widecheck{\varphi} (u+t) \widecheck{\varphi} (v+t^2) \widecheck{\varphi} (w+t^3) \psi(t) \frac{dt}{t}.
\end{equation*}
In particular, the function $\kappa_{\mathrm{L}}$ is a Schwartz function satisfying
\begin{equation*}
\widehat{\kappa_{\mathrm{L}}} (0,0,0) = \int_{{{\mathbb R}}^3} \kappa_{\mathrm{L}} (u,v,w) du dv dw = 0
\end{equation*}
because the function $\psi(t) t^{-1}$ has integral zero. Therefore, the Fourier multiplier $\widehat{K_{\mathrm{L}}}$ associated with the kernel $K_{\mathrm{L}}$ satisfies the anisotropic symbol condition in \eqref{eq:symbol_condition}.

By Theorem \ref{thm:main_twist}, for all $p \in (1, \tfrac{4}{3} )$, $p_1,p_2,p_3\in (1,4)$,
\begin{equation*}
\|T^{\mathrm{L}}(\mathbf{f})\|_{p} \le C\prod_{n=1}^3 \|f_n\|_{p_n}.
\end{equation*}

\subsection{Mixed frequencies \texorpdfstring{$\mathrm{ML}$}{}} \label{subsec:mixedlow}

Without loss of generality, we restrict to the case of the operator $T^{\mathrm{ML},1}$. The remaining two cases of the operators $T^{\mathrm{ML},2}$ and $T^{\mathrm{ML},3}$ are dealt with in the same way.

We have
\begin{equation*}
T^{\mathrm{ML},1} (\mathbf{f}) = \sum_{j \in {{\mathbb Z}}} \sum_{k > 0} T_j ( f_1 \ast_1 \widecheck{\psi_{j+k}}, f_2 \ast_2 \widecheck{\varphi_{2j+k-101}}, f_3 \ast_3 \widecheck{\varphi_{3j+k-101}}).
\end{equation*}
Expanding the convolutions, we obtain the operator
\begin{equation*}
\int_{{{\mathbb R}}^3} f_1 (u,y,z) f_2 (x,v,z) f_3 (x,y,w) K_{\mathrm{ML},1} ( x-u, y-v, z-w ) du dv dw,
\end{equation*}
where the kernel $K_{\mathrm{ML},1}$ is defined by
\begin{equation*}
K_{\mathrm{ML},1} (u,v,w) = \sum_{j \in {{\mathbb Z}}} 2^{6j} \kappa_{\mathrm{ML},1} (2^{j} u, 2^{2j} v, 2^{3j} w) ,
\end{equation*}
for the function
\begin{equation*}
\kappa_{\mathrm{ML},1} (u,v,w) = \sum_{k > 0} \int_{{{\mathbb R}}} \widecheck{\psi_k} (u+t) \widecheck{\varphi_{k-101}} (v+t^2) \widecheck{\varphi_{k-101}} (w+t^3) \psi(t) \frac{dt}{t}.
\end{equation*}

The Fourier transform $\widehat{\kappa_{\mathrm{ML},1}}$ of the function $\kappa_{\mathrm{ML},1}$ is
\begin{equation*}
\widehat{\kappa_{\mathrm{ML},1}} (\xi, \eta, \zeta) = \sum_{k > 0} \psi (2^{-k} \xi) \varphi (2^{-k+101} \eta ) \varphi (2^{-k+101} \zeta ) \int_{{{\mathbb R}}} e^{2 \pi i (t \xi + t^2 \eta + t^3 \zeta)} \psi(t) \frac{dt}{t},
\end{equation*}
and it vanishes on a neighbourhood of the origin, hence
\begin{equation*}
\widehat{\kappa_{\mathrm{ML},1}} (0,0,0) = 0.
\end{equation*}

Furthermore, for every $\xi = (\xi_1, \xi_2, \xi_3) \in {{\mathbb R}}^3$ in the support of $\widehat{\kappa}_{\mathrm{ML},1}$ and every $t \in {{\mathbb R}}$ in the support of $\psi$, the derivative $\xi_1 + 2t \xi_2 + 3t^2 \xi_3$ of the phase does not vanish, hence we can apply integration by parts repeatedly to the function
\begin{equation*}
\theta (\xi) = \int_{{{\mathbb R}}} e^{2 \pi i (t \xi_1 + t^2 \xi_2 + t^3 \xi_3)} \psi(t) \frac{dt}{t},
\end{equation*}
to obtain, for every triple $\alpha = (\alpha_1,\alpha_2,\alpha_3)$ of positive integers, every $\xi$ in the support of $\kappa_{\mathrm{ML},1}$, and every $N \geq 0$, the estimate
\begin{equation*}
| \partial_{\xi}^\alpha \theta (\xi) | \leq C(\alpha, N) ( 1 + |\xi| )^{-N}.
\end{equation*}

In particular, the function
$\kappa_{\mathrm{ML},1}$ is Schwartz. Therefore, the Fourier multiplier $\widehat{K}_{\mathrm{ML},1}$ associated with the kernel $K_{\mathrm{ML},1}$ satisfies the anisotropic symbol condition in \eqref{eq:symbol_condition}.

Again, by Theorem \ref{thm:main_twist}, for all $p \in (1, \tfrac{4}{3} )$, $p_1,p_2,p_3\in (1,4)$,
\begin{equation*}
\|T^{\mathrm{ML},1}(\mathbf{f})\|_{p} \le C\prod_{n=1}^3 \|f_n\|_{p_n}.
\end{equation*}

\subsection{High frequencies} \label{subsec:high}

We claim that for all $p \in [1,\infty)$, $p_1, p_2, p_3 \in (1,\infty)$ satisfying the homogeneity condition
\begin{equation} \label{eq:homogeneity}
\frac{1}{p_1} + \frac{1}{p_2} + \frac{1}{p_3} = \frac{1}{p} ,
\end{equation}
we have
\begin{equation} \label{eq:final_gain_H}
\| T^{\mathrm{H}} (\mathbf{f}) \|_{p} \lesssim \prod_{n=1}^3 \|f_n\|_{p_n}.
\end{equation}
To obtain this result, we interpolate between two different estimates for the operator $T^{(k)}$ defined in \eqref{eq:decomposition_k}.
The first is an estimate with exponential gain in $k \in \mathfrak{F}_{\mathrm{H}}$, the second is an estimate with polynomial loss in $k \in \mathfrak{F}_{\mathrm{H}}$.
\begin{lemma} \label{lem:gain}
Let $p \in [1,\infty)$, $p_1, p_2, p_3 \in [2,\infty)$ satisfying the homogeneity condition in \eqref{eq:homogeneity}.

Then there exist constants $C = C(p_1,p_2,p_3,p) < \infty$ and $\delta = \delta(p_1,p_2,p_3,p) > 0$ such that, for every $k \in \mathfrak{F}_{\mathrm{H}}$ and every triple $\mathbf{f}$ of Schwartz functions,
\begin{equation} \label{eq:gain}
\| T^{(k)}(\mathbf{f}) \|_{p} \leq C 2^{- \delta |k|} \prod_{n=1}^3 \|f_n\|_{p_n} .
\end{equation}
\end{lemma}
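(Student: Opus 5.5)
Fix $k\in\mathfrak{F}_{\mathrm{H}}$; then $k^{(1)}>0$ and all three $k_n$ lie within $100$ of $k^{(1)}$, so $|k|\lesssim k^{(1)}$ and each $k_n \ge k^{(1)}-100$. The plan is to handle each piece $T_j(\Delta^{(1)}_{j+k_1}f_1,\Delta^{(2)}_{2j+k_2}f_2,\Delta^{(3)}_{3j+k_3}f_3)$ by rescaling it to unit scale and applying Theorem \ref{thm:smoothing}. We then sum over $j$ using the frequency localization and a square function argument. This is where the restriction $p_n\ge 2$ enters.

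\textbf{Step 1: a single scale.} Rescale by the anisotropic dilation $x\mapsto (2^{-j}x_1,2^{-2j}x_2,2^{-3j}x_3)$. This turns $T_j$ into $T_0$, and $\Delta^{(n)}_{nj+k_n}$ into $\Delta^{(n)}_{k_n}$; $L^p$ norms transform compatibly because of the homogeneity condition \eqref{eq:homogeneity}. Now $T_0=\mathcal{A}_{\psi(t)/t}$ with cutoff supported in $\pm[\tfrac12,2]$. Split it into the $t>0$ and $t<0$ parts. The $t<0$ part reduces to the $t>0$ part via $t\mapsto -t$, which amounts to reflecting $x_1$ and $x_3$ and leaves frequency supports and norms unchanged. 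The frequency support of $\Delta^{(1)}_{k_1}f_1$ lies in $\{|\xi_1|\ge 2^{k_1-1}\}$ with $k_1\ge k^{(1)}-100$. Theorem \ref{thm:smoothing} with $\lambda\sim 2^{k^{(1)}}$ then gives
\[
\|T_j(\Delta^{(1)}_{j+k_1}f_1,\Delta^{(2)}_{2j+k_2}f_2,\Delta^{(3)}_{3j+k_3}f_3)\|_p\lesssim 2^{-\delta k^{(1)}}\prod_n\|\Delta^{(n)}_{nj+k_n}f_n\|_{p_n}.
\]

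\textbf{Step 2: summation in $j$.} This is the main obstacle, since a triangle inequality over $j$ would diverge. I would exploit the fact that each input is already frequency localized.

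For $p\le 1$ or general $p$, the natural route is to avoid summing the outputs. Instead, bound the output trilinear piece pointwise, or via a duality/vector-valued version, and use
\[
\Big\|\sum_j F_j\Big\|_p\le \Big\|\sum_j |F_j|\Big\|_p .
\]
After that, apply Hölder in $j$ in the form $\sum_j \prod_n a_{n,j}\le \prod_n \big(\sum_j a_{n,j}^{q_n}\big)^{1/q_n}$ with $\sum 1/q_n\ge 1$, taking e.g.\ $q_n=2$ for two functions and $q_n=\infty$ for the third.

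To make Step 1 compatible with this, I would apply the smoothing estimate in a form localized in space. Since $T_j$ only involves translates of size $\lesssim 2^{-nj}$ in direction $n$, decompose ${{\mathbb R}}^3$ into anisotropic boxes of that size. The local version of \eqref{eq:smoothing} (obtained from the global one using Schwartz tails of $\Delta$) controls $|T_j(\dots)|$ on each box by $2^{-\delta k^{(1)}}$ times local $L^{p_n}$ averages of $\Delta^{(n)}f_n$ over enlarged boxes. This yields a pointwise-type bound by maximal averages
\[
2^{-\delta k^{(1)}}\prod_n \big(M_{\mathrm{strong}}|\Delta^{(n)}_{nj+k_n}f_n|^{r}\big)^{1/r}
\]
for some $r<2$ at the cost of a slightly smaller $\delta$. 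To reach $r<2$, I would interpolate the gain estimate with trivial estimates: trivial $L^{r}$ bounds, with no gain, are available for any $r>1$. Summing over $j$ then gives
\[
\sum_j \prod_n (\cdots)\le \prod_{n=1}^{2}\Big(\sum_j (M|\Delta^{(n)}_{nj+k_n} f_n|^r)^{2/r}\Big)^{1/2}\cdot \sup_j (M|\cdot|^r)^{1/r}.
\]
Here Fefferman--Stein vector-valued maximal inequalities, valid since $p_n/r>1$ and $2/r>1$, combine with the Littlewood--Paley square function estimate in the $x_n$ variable to bound each factor by $\|f_n\|_{p_n}$. The condition $p_n\ge 2$ (strictly, $p_n>r$) is used exactly here.

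Finally, since $|k|\lesssim k^{(1)}$ on $\mathfrak{F}_{\mathrm{H}}$, the factor $2^{-\delta k^{(1)}}$ is $\lesssim 2^{-\delta'|k|}$, which is \eqref{eq:gain}. The delicate point is the localization in Step 2: making sure the smoothing gain survives after restricting to boxes and after interpolating to get $r<2$. If that fails, I would fall back to proving the case $p=1$, $p_n\in\{2,4\}$-type endpoints with $p_n=2$ handled by Cauchy--Schwarz in $j$, and then interpolate multilinearly.
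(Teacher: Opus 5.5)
\textbf{There is a genuine gap in Step 2.} Step 1 is fine, including your reflection $x\mapsto(-x_1,x_2,-x_3)$ for the $t<0$ half of the cutoff $\psi(t)/t$, a detail the paper passes over. Step 2 fails for two reasons.

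First, Theorem \ref{thm:smoothing}, even after localization, controls only an integrated norm of $T_j(\cdots)$ over a box. The decay comes from the $x$-integration; pointwise you only have gain-free bounds such as Lemma \ref{lem:shifted_max}. So there is no pointwise domination $|T_j(\cdots)|\lesssim 2^{-\delta k^{(1)}}\prod_n(M|\Delta^{(n)}_{nj+k_n}f_n|^r)^{1/r}$ to feed into the sum over $j$.

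Second, and decisively, the exponents cannot be matched. Hölder in $j$ with $(2,2,\infty)$, followed by the $\ell^2$-valued Fefferman--Stein inequality, needs local input exponents $r_1,r_2<2$, hence $\sum_n 1/r_n>1$. More generally, any pointwise Hölder in $j$ with exponents $q_n$, $\sum_n 1/q_n\ge 1$, combined with Fefferman--Stein (which needs $r_n<q_n$) forces $\sum_n 1/r_n>1$. But Theorem \ref{thm:smoothing} only gives estimates with $\sum_n 1/r_n=1/p\le 1$.

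Interpolating with ``trivial'' bounds does not rescue this. Minkowski plus Hölder give trilinear bounds only when $\sum_n 1/r_n\le 1$, and interpolation stays inside that convex region. In fact, take $f_n$ to be the indicator of the $\epsilon$-ball around $x_0+t_0^ne_n$. Then $\|\mathcal{A}_\psi(\mathbf f)\|_1\gtrsim\epsilon^4$, while $\prod_n\|f_n\|_r\sim\epsilon^{9/r}$. So even the local bound $L^r\times L^r\times L^r\to L^1$ fails for $r<9/4$.

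\textbf{What you are missing is that none of this machinery is needed.} The triangle inequality in $j$ does not diverge at $p=1$, which is the case the paper actually proves (its argument bounds $\|T^{(k)}(\mathbf f)\|_1$).
\begin{itemize}
\item There $\sum_n 1/p_n=1$, so after Step 1 you may apply Hölder's inequality in $j$ with exponents $(p_1,p_2,p_3)$ to $\sum_j\prod_n\|\Delta^{(n)}_{nj+k_n}f_n\|_{p_n}$.
\item By Fubini, $(\sum_j\|g_j\|_{p_n}^{p_n})^{1/p_n}=\|(\sum_j|g_j|^{p_n})^{1/p_n}\|_{p_n}$.
\item The embedding $\ell^2\hookrightarrow\ell^{p_n}$ (this is where $p_n\ge 2$ enters, not through Fefferman--Stein) and the Littlewood--Paley square function estimate bound this by $\|f_n\|_{p_n}$.
\end{itemize}

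Your worry is legitimate only for $p>1$, where $\sum_n 1/p_n<1$ and this Hölder step breaks down. Those cases are not needed: in \S\ref{subsec:high} the $p=1$ gain is interpolated with the polynomial-loss bound of Lemma \ref{lem:loss}. This gives exponential decay in $|k|$ for all exponents in \eqref{eq:final_gain_H}, and also recovers the $p>1$ cases of the lemma.

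Your fallback, Cauchy--Schwarz in $j$ at points such as $(2,4,4)$, is a special case of this $p=1$ argument. However, interpolating only among $p=1$ estimates can never reach $p>1$; for that you must interpolate against Lemma \ref{lem:loss}.
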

\begin{lemma} \label{lem:loss}
Let $p \in [1,\infty)$, $p_1, p_2, p_3 \in (1,\infty)$ satisfying the homogeneity condition in \eqref{eq:homogeneity}.

Then there exists a constant $C = C(p_1,p_2,p_3,p) < \infty$ such that, for every $k \in \mathfrak{F}_{\mathrm{H}}$ and every triple $\mathbf{f}$ of Schwartz functions,
\begin{equation} \label{eq:loss}
\| T^{(k)}(\mathbf{f}) \|_{p} \leq C |k|^5 \prod_{n=1}^3 \|f_n\|_{p_n} .
\end{equation}
\end{lemma}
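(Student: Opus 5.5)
The statement to prove is Lemma \ref{lem:loss}, the polynomial-loss bound for $T^{(k)}$. The plan is to dominate $T^{(k)}$ pointwise by a composition of maximal and square-function operators. Each of these is bounded on $L^{p_n}$ for $p_n\in(1,\infty)$, with constants growing at most polynomially in $|k|$.

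\textbf{Step 1: a pointwise bound for a single scale.} Write $\Delta^{(n)}_{nj+k_n} f_n = \Delta^{(n)}_{nj+k_n}\widetilde{\Delta}^{(n)}_{nj+k_n} f_n$ and use $|\psi(2^j t)/t|\lesssim 2^j \mathbf 1_{\{|t|\sim 2^{-j}\}}$. This gives
\begin{equation*}
|T_j(\Delta^{(1)}_{j+k_1}f_1,\Delta^{(2)}_{2j+k_2}f_2,\Delta^{(3)}_{3j+k_3}f_3)(x)| \lesssim 2^{j}\int_{|t|\sim 2^{-j}} \prod_{n=1}^3 |\Delta^{(n)}_{nj+k_n}f_n(x+t^n e_n)|\,dt .
\end{equation*}
The aim is to remove the coupling in $t$. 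For $k\in\mathfrak F_{\mathrm H}$ every $k_n$ is at least $k^{(1)}-100>-100$, so each function oscillates at a scale no coarser than the translation scale $2^{-nj}$. I expand $\Delta^{(n)}_{nj+k_n}f_n(x+t^ne_n)=\int f_n(x+(t^n-s)e_n)\,2^{nj+k_n}\widecheck\psi(2^{nj+k_n}s)\,ds$. The translation $t^n\lesssim 2^{-nj}$ then costs at most a factor $(1+2^{k_n})$ compared with the kernel centered at $0$. The standard tool for this is the shifted maximal function: $\sup_{|y|\le 2^{-nj}}|\Delta^{(n)}_{nj+k_n}f_n(x+ye_n)|\lesssim M^{(n),\sigma}$-type shifted maximal operators, which have $L^p$ norms $\lesssim \log(2+\sigma)$ with shift $\sigma\sim 2^{|k_n|}$, hence $\lesssim |k|$. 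Concretely, I bound the product by pulling out the sup over $t$ for two of the three factors and keeping the $t$-average for the third.

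\textbf{Step 2: summing over $j$.} This gives
\begin{equation*}
|T^{(k)}(\mathbf f)|\lesssim \sum_j \prod_{n=1}^3 \mathcal M^{(n)}_{k_n}\big(\Delta^{(n)}_{nj+k_n} f_n\big),
\end{equation*}
where $\mathcal M^{(n)}_{k_n}$ is a one-dimensional shifted maximal operator in direction $e_n$ at scale $2^{-nj}$, shift $\lesssim 2^{|k|}$. Next I apply Cauchy--Schwarz in $j$ to two of the three factors and bound the third by its supremum in $j$:
\begin{equation*}
|T^{(k)}(\mathbf f)|\lesssim \Big(\sum_j|\mathcal M^{(1)}\Delta^{(1)}_{j+k_1}f_1|^2\Big)^{1/2}\Big(\sum_j|\mathcal M^{(2)}\Delta^{(2)}_{2j+k_2}f_2|^2\Big)^{1/2}\sup_j|\mathcal M^{(3)}\Delta^{(3)}_{3j+k_3}f_3| .
\end{equation*}
Then I apply H\"older with exponents $p_1,p_2,p_3$, which is valid for $p\ge 1$ by \eqref{eq:homogeneity}. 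Each factor is handled separately. The Fefferman--Stein vector-valued inequality for shifted maximal functions (with bound $\lesssim |k|$ on $L^{p}(\ell^2)$, $1<p<\infty$) controls the first two, followed by the Littlewood--Paley square function estimate in direction $e_n$. For the third, I use $\sup_j\le$ the square function, or a maximal bound on the $\Delta$'s. Counting the logarithmic losses gives a bound polynomial in $|k|$. Several such factors from shifted maximal functions, including the vector-valued versions, give at most $|k|^5$ with room to spare.

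\textbf{Main obstacle.} The delicate part is Step 1: controlling the $t$-average of the product by a product of shifted maximal functions with only polynomial loss in $|k|$, while the translates $t^n$ differ across coordinates. My first attempt is to bound the $t$-integral by taking the sup in $t$ over $|t|\sim2^{-j}$ for every factor. This is legitimate because the averaging in $t$ has total mass $O(1)$ after the $2^j$ normalization. The sup of $|\Delta_{m}f(x+ye)|$ over $|y|\le 2^{-nj}=2^{k_n}2^{-m}$ is exactly a shifted-type maximal function at scale $2^{-m}$ with shift parameter $2^{k_n}$. Its $L^p$ and $L^p(\ell^2)$ bounds grow like $k_n$ or $k_n^{1/p}$, which is polynomial. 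If this sup-bound is too crude for the vector-valued step, I fall back on the standard Muscalu--Schlag style pointwise domination $\sup_{|y|\le 2^{\sigma-m}}|\Delta_m f(x+y)|\lesssim\sum_{\ell}(1+|\ell|)^{-10}M_{\ell}\widetilde\Delta_m f$. This has the same polynomial-loss conclusion.
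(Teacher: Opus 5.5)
\textbf{There is a genuine gap in Step 1, at the point you flagged as the main obstacle.} You claim that $\sup_{|y|\le 2^{k_n}2^{-m}}|\Delta_m f(x+ye_n)|$ is ``a shifted-type maximal function with shift $2^{k_n}$'' whose $L^p$ norm grows polynomially in $k_n$. This is false.

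The shifted maximal function $\mathcal{M}_\sigma$ has a \emph{fixed} shift $\sigma$, with a supremum only over scales. That is why it costs only $\log(2+|\sigma|)^{1/p}$. A supremum over all translates within $2^{\sigma}$ natural scales costs $2^{\sigma/p}$. To see this, take $\widehat f=\widetilde\psi_m$. Then $\Delta_m f=2^m\widecheck{\psi}(2^m\cdot)$ and $\|f\|_p\sim 2^{m(1-1/p)}$. On the other hand, $\sup_{|y|\le 2^{\sigma-m}}|\Delta_mf(x+y)|\gtrsim 2^m$ on a set of measure $\sim 2^{\sigma-m}$. So this supremum has $L^p$ norm $\gtrsim 2^{\sigma/p}\|f\|_p$.

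It follows that no pointwise domination of that supremum by operators with polylogarithmic bounds in $2^{\sigma}$ can exist. This rules out your fallback $\sup_{|y|\le 2^{\sigma-m}}|\Delta_m f(x+y)|\lesssim\sum_\ell(1+|\ell|)^{-10}\mathcal{M}_\ell\widetilde\Delta_m f$. The genuine Muscalu--Schlag-type bound concerns a single translate $\Delta_m f(x+2^{-m}\sigma)$, with no supremum.

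The same loss occurs whether you take the sup in $t\sim 2^{-j}$ in all three factors or only in two. Each such factor sees $\sim 2^{k_n}$ natural-scale translates, so your argument yields only a bound $2^{c|k|}$ with $c$ of order one. That cannot be interpolated against the gain $2^{-\delta|k|}$ of Lemma \ref{lem:gain} (with $\delta$ small) to cover the full range $p_n\in(1,\infty)$.

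\textbf{The fix is never to take a supremum in $t$, but to keep it as an average.} This is the content of Lemma \ref{lem:shifted_max}.

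\begin{enumerate}
\item Rescale to $j=0$ and let $\kappa\approx k^{(1)}$. Split $\{|t|\sim 1\}$ into the $\sim 2^{\kappa}$ intervals $I_l$ of length $2^{-\kappa}$, each carrying weight $2^{-\kappa}$.
\item Expand the kernels of $\Delta^{(2)}_{\kappa}$ and $\Delta^{(3)}_{\lambda}$ into pieces on intervals of length $2^{-\kappa}$. These pieces carry weights $(1+|m|)^{-N}(1+|n|)^{-N}$.
\item Since $t^2$ and $t^3$ move by only $O(2^{-\kappa})$ as $t$ ranges over $I_l$, each of the three factors is then bounded by $\mathcal M_{\sigma}$ with a \emph{fixed} shift $\sigma=\sigma(l,m)$ or $\sigma(l,n)$, where $|\sigma|\lesssim 2^{\kappa}+|m|$ (or $+|n|$).
\end{enumerate}

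This gives a weighted average of products of fixed-shift maximal functions, and the triangle inequality over the index set costs only $\sum_\iota a_\iota\prod_n\log(2+|\sigma_{n,\iota}|)^{b_n}\lesssim\kappa^{b_1+b_2+b_3}$.

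With this domination in place, your Step 2 is exactly the paper's argument and gives the $|k|^5$ loss:
\begin{itemize}
\item bound one factor by its sup in $j$, using $|\Delta_j g|\lesssim\mathcal M g$;
\item bound the other two by square functions, then apply H\"older;
\item use the estimates $\log(2+|\sigma|)^{1/p}$ and $\log(2+|\sigma|)^{2}$ for $\mathcal M_\sigma$ and its $\ell^2$-valued version.
\end{itemize}
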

Interpolating between the two types of bound, we obtain an estimate with exponential gain in $k \in \mathfrak{F}_{\mathrm{H}}$. Summing over $k \in \mathfrak{F}_{\mathrm{H}}$, we recover the desired estimate in \eqref{eq:final_gain_H}.

\begin{proof}[Proof of Lemma \ref{lem:gain}]

By the definition of $\widetilde{\Delta}$ and Theorem \ref{thm:smoothing}, there exist constants $C < \infty$ and $\delta > 0$ such that, for all $j \in {{\mathbb Z}}$ and $k \in \mathfrak{F}_{\mathrm{H}}$,
\begin{equation*}
\| T_j (\Delta^{(1)}_{j+k_1} f_1, \Delta^{(2)}_{2j+k_2} f_2, \Delta^{(3)}_{3j+k_3} f_3) \|_{p} \leq C 2^{- \delta |k|} \prod_{n=1}^3 \| \widetilde{\Delta}^{(n)}_{nj+k_n} f_n \|_{p_n} .
\end{equation*}
Therefore, by H\"{o}lder's inequality with exponents $(p_1,p_2,p_3)$ applied to the sum in $j \in {{\mathbb Z}}$, we have that, for every $k \in \mathfrak{F}_{\mathrm{H}}$,
\begin{equation*}
\| T^{(k)} (\mathbf{f}) \|_{1} \le C 2^{- \delta |k|} \prod_{n=1}^3 \| ( \sum_{j \in {{\mathbb Z}}} | \widetilde{\Delta}^{(n)}_{nj+k_n} f_n |^{p_n} )^{\frac{1}{p_n}} \|_{p_n}.
\end{equation*}
By the embedding between $\ell^p$ spaces and the $L^p$ estimate for the Littlewood--Paley square function, for every $n \in \{1,2,3\}$,
\begin{equation*}
\| ( \sum_{j \in {{\mathbb Z}}} | \widetilde{\Delta}^{(n)}_{nj+k_n} f_n |^{p_n} )^{\frac{1}{p_n}} \|_{p_n} \leq \| ( \sum_{j \in {{\mathbb Z}}} | \widetilde{\Delta}^{(n)}_{nj+k_n} f_n |^{2} )^{\frac{1}{2}} \|_{p_n} \lesssim \| f_n \|_{3}.
\end{equation*}
The inequalities in the previous two displays yield the desired estimate in \eqref{eq:gain}.
\end{proof}

\begin{proof}[Proof of Lemma \ref{lem:loss}]

We introduce an auxiliary operator, the shifted dyadic maximal function $\mathcal{M}_\sigma$. For every $\sigma \in {{\mathbb R}}$, we define $\mathcal{M}_\sigma$ by, for every Schwartz function $g \in \mathcal{S}({{\mathbb R}})$,
\begin{equation}
\mathcal{M}_{\sigma} g (x) = \sup_{j \in {{\mathbb Z}}} 2^{-j} \int_{[\sigma 2^{j}, (\sigma+1) 2^{j} )} | g(x+t) | dt .
\end{equation}

We claim the following result.

\begin{lemma} \label{lem:shifted_max}
There exists $C < \infty$ such that for all $ \kappa, \lambda \in {{\mathbb Z}}$, $ \kappa, \lambda > 0$, $| \kappa - \lambda | < 100$, we have the pointwise estimates
\begin{equation} \label{eq:est_shifted_max}
| T_0( f_1, \Delta^{(2)}_\kappa f_2, \Delta^{(3)}_\lambda f_3) | \lesssim \sum_{\iota \in \mathcal{I}^{(3)}} a_\iota (\mathcal{M}^{(1)}_{\sigma_{1,\iota}} f_1) (\mathcal{M}^{(2)}_{\sigma_{2,\iota}} f_2) (\mathcal{M}^{(3)}_{\sigma_{3,\iota}} f_3),
\end{equation}
where $\mathcal{I}$ is a countable set and $a_\iota > 0$, $\sigma_{1,\iota}, \sigma_{2,\iota} , \sigma_{3,\iota} \in {{\mathbb R}}$ satisfy, for all $b_1,b_2,b_3 > 0$,
\begin{equation} \label{eq:summability}
\sum_{\iota \in \mathcal{I}} a_\iota \log(2 + | \sigma_{1,\iota} | )^{b_1} \log(2 + | \sigma_{2,\iota} | )^{b_2} \log(2 + | \sigma_{3,\iota} | )^{b_3} \leq C \kappa^{b_1+b_2+b_3} \simeq C \lambda^{b_1+b_2+b_3} .
\end{equation}
Analogous results hold true for the operators
\begin{equation*}
T_0( \Delta^{(1)}_\kappa f_1, \Delta^{(2)}_\lambda f_2, f_3), \qquad T_0( \Delta^{(1)}_\kappa f_1, f_2, \Delta^{(3)}_\lambda f_3).
\end{equation*}
\end{lemma}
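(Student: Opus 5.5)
We treat $T_0(f_1,\Delta^{(2)}_\kappa f_2,\Delta^{(3)}_\lambda f_3)$ by writing each frequency projection as a partial convolution, so that the operator is a single integral against an explicit kernel. We then bound that kernel by a rapidly decaying sum of normalized indicators of dyadic-type intervals whose positions relative to the origin are at most polynomial in $2^\kappa$. Concretely,
\begin{equation*}
T_0(f_1,\Delta^{(2)}_\kappa f_2,\Delta^{(3)}_\lambda f_3)(x)=\int_{{\mathbb R}^3} f_1(x+t e_1)\, f_2(x+(t^2-v)e_2)\, f_3(x+(t^3-w)e_3)\,\psi(t)t^{-1}\,2^{\kappa}\widecheck{\psi}(2^\kappa v)\,2^{\lambda}\widecheck{\psi}(2^\lambda w)\,dt\,dv\,dw .
\end{equation*}
Here $t$ ranges over $\supp\psi\subset\{1/2\le|t|\le 2\}$.

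First decompose $\psi(t)t^{-1}$ into pieces supported on unit-scale intervals $[\sigma,\sigma+1)$ with $|\sigma|\le 2$, bounded uniformly. Next decompose the Schwartz functions $2^\kappa\widecheck\psi(2^\kappa\cdot)$ and $2^\lambda\widecheck\psi(2^\lambda\cdot)$ into the central interval of length $2^{-\kappa}$ plus dyadic shells $2^{-\kappa+m}\le |v|<2^{-\kappa+m+1}$, $m\ge0$. On each shell split further into intervals of length $2^{-\kappa}$ (or $2^{-\kappa+m}$), which carry weight $O(2^{-Nm})$. This splits the kernel into a countable sum of products of box indicators with coefficients $a_\iota$.

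For a fixed box we pass to absolute values. The variable $t$ enters all three functions, so we freeze $t$ inside the unit interval and handle the $t$-dependence by change of variables. For fixed $t$, the $v$-integral over an interval $I$ of length $2^{-\kappa+m}$ located at distance $\sim 2^{-\kappa+m}$ gives
\begin{equation*}
\int_I |f_2(x+(t^2-v)e_2)|\,dv \lesssim 2^{-\kappa+m}\,\mathcal{M}^{(2)}_{\sigma}f_2(x+t^2e_2).
\end{equation*}
The shift $\sigma$ here is bounded. This is not the form of \eqref{eq:est_shifted_max}, however, because $f_2$ is still evaluated at $x+t^2e_2$ rather than at $x$.

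Instead, we combine the $t$ and $v$ integrations. Substitute $s=t^2-v$, so that for fixed $t$ in a small interval the variable $s$ ranges over an interval of length $\sim 2^{-\kappa+m}$ near $t^2$. Using the decay in $m$, we then throw away the $t$-localization in $f_2$ and $f_3$ and bound crudely: for each fixed small $v$-box, the integral over $t\in[\sigma,\sigma+1)$ of $|f_1(x+te_1)|$ is at most $\mathcal{M}^{(1)}_{\sigma}f_1(x)$ (scale $j=0$).

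To reach a true product of three maximal functions, we cut $t$ further into subintervals $J$ of length $2^{-\kappa}$; there are about $2^{\kappa}$ of them. On each $J$, both $t^2$ and $t^3$ vary by $O(2^{-\kappa})$. Hence $|f_2|$ and $|f_3|$ averaged over their $(v,w)$ boxes are controlled by averages over intervals of length $2^{-\kappa+m}$ at positions $\approx t_J^2,t_J^3$, that is, at shift parameters $\sigma_2\approx 2^{\kappa-m}t_J^2$ and $\sigma_3\approx 2^{\lambda-m}t_J^3$. Likewise, $f_1$ averaged over $J$ is controlled by $\mathcal{M}^{(1)}_{\sigma_1}$ at scale $2^{-\kappa}$ with $\sigma_1\approx 2^\kappa t_J$. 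Each $J$ contributes the product of the three averages times $|J|\cdot|I_v|\cdot|I_w|$ divided by the normalizations. The outcome is
\begin{equation*}
a_\iota\approx 2^{-\kappa}\,2^{-N(m+m')},
\end{equation*}
where $m,m'$ index the $v$- and $w$-shells, and the shifts satisfy $|\sigma_{i,\iota}|\lesssim 2^{\kappa+|m|+|m'|}$. Since $\sum_J 2^{-\kappa}\lesssim 1$, we obtain
\begin{equation*}
\sum_\iota a_\iota\prod_i\log(2+|\sigma_{i,\iota}|)^{b_i}\lesssim \sum_{m,m'}2^{-N(m+m')}(\kappa+m+m')^{b_1+b_2+b_3}\lesssim \kappa^{b_1+b_2+b_3},
\end{equation*}
using $|\kappa-\lambda|<100$. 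This proves \eqref{eq:summability}.

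The main obstacle is this bookkeeping step. The scales in the three coordinates must be chosen compatibly: $2^{-\kappa}$ for $t$ and $2^{-\kappa},2^{-\lambda}$ for $v,w$, all comparable. Also, the $O(2^{-\kappa})$ variation of $t^2,t^3$ over $J$ must be absorbed by enlarging the averaging intervals by a bounded factor, which the shifted maximal function tolerates by splitting into $O(1)$ adjacent shifts.

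The two analogous operators are handled identically, with the roles of the coordinates permuted. The unprojected function is always averaged at the $O(2^{-\kappa})$ scale inherited from the $t$-decomposition, or at unit scale if it is the first coordinate.
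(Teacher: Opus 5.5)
Your final argument is the paper's proof. You cut $t$ into intervals of length $2^{-\kappa}$ and enlarge the $v$- and $w$-intervals by a bounded factor to absorb the $O(2^{-\kappa})$ variation of $t^2,t^3$, so the integrand factorizes into three shifted averages with shifts of size $O(2^{\kappa})$ up to the decay index and weights $2^{-\kappa}$ times rapid decay, summed over the $\sim 2^{\kappa}$ intervals; the only cosmetic difference is that the paper splits $v,w$ into integer translates of $[0,2^{-\kappa})$ with weights $(1+|m|)^{-N}(1+|n|)^{-N}$ instead of dyadic shells. You should drop the earlier detours (freezing $t$, and the crude unit-scale bound for $f_1$, which reappears in your last sentence), since they give no product bound: in every case, including $f_1$ here, the unprojected function is averaged over the $t$-interval at scale $\sim 2^{-\kappa}$, via the substitution $s=t^n$ when $n\ge 2$, whose Jacobian is bounded above and below on $\supp\psi$.
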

\begin{proof}
Without loss of generality, we assume $\kappa \geq \lambda$. The proof in the case of $\lambda \geq \kappa$ is analogous.
We have
\begin{align*}
& T_0(f_1, \Delta^{(2)}_\kappa f_2, \Delta^{(3)}_\lambda f_3) \\
& = \int_{{{\mathbb R}}^3} f_1 (x+t,y,z) f_2 (x,y+t^2-u,z) f_3 (x,y,z+t^3-v) \widecheck{\psi_\kappa} (u) \widecheck{\psi_\lambda} (v)
\psi(t) \frac{dt}{t} du dv .
\end{align*}

For every $l \in {{\mathbb Z}}$, we define the dyadic interval
\begin{equation*}
I_l = [l 2^{-\kappa}, (l+1) 2^{-\kappa}) = l 2^{-\kappa} + [0,2^{-\kappa}).
\end{equation*}
We split the integration in $t$, $u$, and $v$ over these intervals, so that by the rapid decay of $\widecheck{\psi}$, we bound the left hand side in \eqref{eq:est_shifted_max} by
\begin{align*}
& \sum_{m, n \in {{\mathbb Z}}} (1 + |m|)^{-N} (1 + |n|)^{-N} 2^{-\kappa} \\
& \sum_{2^{\kappa-1} \leq |l| \leq 2^{\kappa + 1} } 2^{3\kappa} \int_{I_m} \int_{I_n} \int_{I_l} | f_1 (x+t,y,z) | | f_2 (x,y+t^2-u,z) | | f_3 (x,y,z+t^3-v) | dt du dv .
\end{align*}

If $l > 0$, for all $m,n \in {{\mathbb Z}}$, we bound the argument of the sum in $l$ by
\begin{equation*}
2^{3\kappa} \int_{I_l} \int_{J_{l,m}} \int_{K_{l,n}} | f_1 (x+t,y,z) | | f_2 (x,y-u,z) | | f_3 (x,y,z-v) | du dv dt,
\end{equation*}
where
\begin{align*}
& I_m - t^2
\subseteq J_{l,m} = (m 2^{-3} - (l+1)^2 2^{-\kappa-3} )2^{-\kappa +3} + [ 0, 2^{-\kappa+3} ) , \\
& I_n - t^3
\subseteq K_{l,n} = (n 2^{-5} - (l+1)^3 2^{-2\kappa-5} )2^{-\kappa + 5} + [ 0, 2^{-\kappa+5} ) .
\end{align*}
Therefore, for
\begin{equation*}
\sigma_{1} = \sigma_{1,l} = l, \quad \sigma_{2} = \sigma_{2,l,m} = m 2^{-3} - (l+1)^2 2^{-\kappa-3}, \quad \sigma_{3} = \sigma_{3,l,n} = n 2^{-5} - (l+1)^3 2^{-2\kappa-5},
\end{equation*}
we bound the argument of the sum in $l$ by
\begin{equation*}
\mathcal{M}^{(1)}_{\sigma_{1}} f_1 (x,y,z) \mathcal{M}^{(2)}_{\sigma_{2}} f_2 (x,y,z) \mathcal{M}^{(3)}_{\sigma_{3}} f_3 (x,y,z).
\end{equation*}
If $l < 0$, for all $m,n \in {{\mathbb Z}}$ we obtain an analogous bound, yielding the desired estimate in \eqref{eq:est_shifted_max}. Note that the summability condition on the coefficients in \eqref{eq:summability} is satisfied.
\end{proof}

By dilation invariance, Lemma \ref{lem:shifted_max} implies that for every $j \in {{\mathbb Z}}$ and every $k \in \mathfrak{F}_{\mathrm{H}}$ (e.g. $k^{(3)} = k_1$),
\begin{equation} \label{eq:single_scale}
| T_j (\Delta^{(1)}_{j + k_1} f_1, \Delta^{(2)}_{2j + k_2} f_2, \Delta^{(3)}_{3j + k_3} f_3) | \lesssim \sum_{\iota \in \mathcal{I}} a_\iota (\mathcal{M}^{(1)}_{\sigma_{1,\iota}} \Delta^{(1)}_{j + k_1} f_1) (\mathcal{M}^{(2)}_{\sigma_{2,\iota}} f_2) (\mathcal{M}^{(3)}_{\sigma_{3,\iota}} f_3),
\end{equation}
where we used the identity
\begin{align*}
& | T_j (\Delta^{(1)}_{j + k_1} f_1, \Delta^{(2)}_{2j + k_2} f_2, \Delta^{(3)}_{3j + k_3} f_3) | \\
& = | D_{(2^j,2^{2j},2^{3j})} T_0 ( D_{(2^{-j},2^{-2j},2^{-3j})} \Delta^{(1)}_{j + k_1} f_1, \Delta^{(2)}_{k_2} D_{(2^{-j},2^{-2j},2^{-3j})} f_2, \Delta^{(3)}_{k_3} D_{(2^{-j},2^{-2j},2^{-3j})} f_3) | ,
\end{align*}
with $D_{(a,b,c)} f(x,y,z) = f(ax,by,cz)$, $D_{a} g(x) = g(ax)$, and $D_{a} \mathcal{M} D_{1/a} = \mathcal{M}$.

By the definition of $\widetilde{\Delta}$ and the estimate in \eqref{eq:single_scale},, for every $k \in \mathfrak{F}_{\mathrm{H}}$,
\begin{equation*}
\| T^{(k)} (\mathbf{f}) \|_{p} \lesssim \sum_{\iota \in \mathcal{I}} a_\iota \| \sum_{j \in {{\mathbb Z}}} (\mathcal{M}^{(1)}_{\sigma_{1,\iota}} \Delta^{(1)}_{j + k_1} f_1) (\mathcal{M}^{(2)}_{\sigma_{2,\iota}} \widetilde{\Delta}^{(2)}_{2j + k_2} f_2) (\mathcal{M}^{(3)}_{\sigma_{3,\iota}} \widetilde{\Delta}^{(3)}_{3j + k_3} f_3) \|_{p}.
\end{equation*}
By H\"{o}lder's inequality with exponents $(2,2,\infty)$ applied to the sum in $j \in {{\mathbb Z}}$ and with exponents $(p_1,p_2,p_3)$ applied to the $L^p$ norm, we bound the right hand side in the previous display by
\begin{equation*}
\sum_{\iota \in \mathcal{I}} a_\iota \| ( \sup_{j \in {{\mathbb Z}}} \mathcal{M}^{(1)}_{\sigma_{1,\iota}} \Delta^{(1)}_{j + k_1} f_1 ) \|_{p_1} \| ( \sum_{j \in {{\mathbb Z}}} | \mathcal{M}^{(2)}_{\sigma_{2,\iota}} \widetilde{\Delta}^{(2)}_{2j + k_2} f_2 |^2 )^{\frac{1}{2}} \|_{p_2} \| ( \sum_{j \in {{\mathbb Z}}} | \mathcal{M}^{(3)}_{\sigma_{3,\iota}} \widetilde{\Delta}^{(3)}_{3j + k_3} f_3 |^2 )^{\frac{1}{2}} ) \|_{p_3}.
\end{equation*}
Next, we observe that there exists a constant $C< \infty$ such that, for every $j \in {{\mathbb Z}}$, we have $| \Delta_j g | \leq C \mathcal{M} g $. Moreover, we recall that, for every $p \in (1,\infty)$,
\begin{equation*}
\| \mathcal{M}_\sigma g \|_{p} \lesssim \log(2 + | \sigma | )^{\frac{1}{p}} \| g \|_{p} , \qquad \| ( \sum_{j \in {{\mathbb Z}}} | \mathcal{M}_\sigma g_j |^2 )^{\frac{1}{2}} \|_{p} \lesssim \log(2 + | \sigma | )^2 \| ( \sum_{j \in {{\mathbb Z}}} | g_j |^2 )^{\frac{1}{2}} \|_{p} .
\end{equation*}
For a proof of the second inequality in the last display, see Theorem 3.1 in \cite{GHLR17}.

Together with the estimate for the Hardy--Littlewood maximal function and the Littlewood--Paley square function, we continue the chain of inequalities with
\begin{equation*}
\sum_{\iota \in \mathcal{I}} a_\iota \log(2 + | \sigma_{1,\iota} | )^{\frac{1}{p_1}} \log(2 + | \sigma_{2,\iota} | )^2 \log(2 + | \sigma_{3,\iota} | )^2 \| f_1 \|_{p_1} \| f_2 \|_{p_2} \| f_3 \|_{p_3}.
\end{equation*}
The summability condition on the coefficients in \eqref{eq:summability} yield the desired estimate in \eqref{eq:loss}.
\end{proof}

\subsection{Mixed frequencies \texorpdfstring{$\mathrm{MH}$}{}} \label{subsec:mixedhigh}

Without loss of generality, we restrict to the case of the operator $T^{\mathrm{MH},1}$. The remaining two cases of the operators $T^{\mathrm{MH},2}$ and $T^{\mathrm{MH},3}$ are dealt with in the same way.

We claim that for all $p \in [1,\infty)$, $p_1, p_2, p_3 \in (1,\infty)$ satisfying the homogeneity condition in \eqref{eq:homogeneity} we have
\begin{equation} \label{eq:final_gain_MH}
\| T^{\mathrm{MH},1} (\mathbf{f}) \|_{p} \lesssim \| f_1 \|_{p_1} \| f_2 \|_{p_2} \| f_3 \|_{p_3}.
\end{equation}
To obtain this result, we interpolate between two different estimates for the operator $T^{(k)}$ defined in \eqref{eq:decomposition_k} and a slight variation of it, namely the auxiliary operator T$^{(k_2,k_3)}$ defined by
\begin{equation*}
T^{(k_2,k_3)}(\mathbf{f}) = \sum_{j \in {{\mathbb Z}}} T_j(S^{(1)}_{j} f_1, \Delta^{(2)}_{2j+k_2} f_2, \Delta^{(3)}_{3j+k_3} f_3).
\end{equation*}
The first is an estimate with exponential gain in $k \in \mathfrak{F}_{\mathrm{MH},1}$, the second is an estimate with polynomial loss in $k \in \mathfrak{F}_{\mathrm{MH},1}$.

In particular, to obtain the first estimate, we split further $\mathfrak{F}_{\mathrm{MH},1} = \mathfrak{F}_{\mathrm{MH},1,+} \cup \mathfrak{F}_{\mathrm{MH},1,-}$, where we define
\begin{align*}
& \mathfrak{F}_{\mathrm{MH},1,+} = \{ k \in \mathfrak{F}_{\mathrm{MH},1} \colon 0 < k_1 < k^{(2)} \}, \\
& \mathfrak{F}_{\mathrm{MH},1,+} = \{ k \in \mathfrak{F}_{\mathrm{MH},1} \colon k_1 \leq 0 \}.
\end{align*}
Note that, for all fixed $k_2,k_3 \in {{\mathbb Z}}$ such that $\max(k_2,k_3) > 0$ and $|k_2 - k_3| \leq 100$,
\begin{equation*}
T^{(k_2,k_3)}(\mathbf{f}) = \sum_{k_1 \colon (k_1,k_2,k_3) \in \mathfrak{F}_{\mathrm{MH},1,-}} T^{(k)}(\mathbf{f}).
\end{equation*}

\begin{lemma} \label{lem:gain_2+}
Let $p \in [1,\infty)$, $p_1, p_2, p_3 \in [2,\infty)$ satisfying the homogeneity condition \eqref{eq:homogeneity}.
Then there exist constants $C = C(p_1,p_2,p_3,p) < \infty$ and $\delta = \delta(p_1,p_2,p_3,p) > 0$ such that, for every $k \in \mathfrak{F}_{\mathrm{MH},1,+}$ and every triple $\mathbf{f}$ of Schwartz functions,
\begin{equation*}
\| T^{(k)}(\mathbf{f}) \|_{p} \leq C 2^{- \delta |k|} \prod_{n=1}^3 \|f_n\|_{p_n} .
\end{equation*}
\end{lemma}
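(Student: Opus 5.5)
The plan is to follow the proof of Lemma \ref{lem:gain} essentially verbatim. The only point to check is that Theorem \ref{thm:smoothing} still yields exponential decay in $|k|$ on $\mathfrak{F}_{\mathrm{MH},1,+}$. For $k\in\mathfrak{F}_{\mathrm{MH},1,+}$ we have $k^{(1)}=\max(k_2,k_3)>0$, $|k_2-k_3|\le 100$ and $0<k_1<k^{(2)}$. All three components are therefore positive and bounded by $k^{(1)}$, so $|k|\le 3k^{(1)}$. It thus suffices to gain a factor $2^{-\delta k^{(1)}}$ at each single scale.

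\emph{Single-scale estimate.} Fix $j\in\mathbb{Z}$. Using the anisotropic dilation $D_{(2^{j},2^{2j},2^{3j})}$, as in the proof of Lemma \ref{lem:loss}, we reduce to $j=0$. By $L^p$ scaling the norms on both sides transform compatibly, thanks to the homogeneity condition \eqref{eq:homogeneity}. The operator $T_0$ is the average $\mathcal{A}_{\tilde\psi}$ with cutoff $\tilde\psi(t)=\psi(t)/t$. This cutoff is $\mathcal{C}^1$ and supported in $\{1/2\le|t|\le 2\}$. Theorem \ref{thm:smoothing} is stated for support in $[a,b]\subset(0,\infty)$, so we split $\tilde\psi=\tilde\psi\mathbf{1}_{t>0}+\tilde\psi\mathbf{1}_{t<0}$. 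The negative part is handled by the change of variables $t\mapsto -t$ together with the reflections $x_1\mapsto -x_1$ and $x_3\mapsto -x_3$, which preserve the frequency support conditions and the $L^p$ norms.

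Let $n\in\{2,3\}$ be an index with $k_n=k^{(1)}$. The function $\Delta^{(n)}_{k_n}f_n$ has Fourier support in $\{|\xi_n|\ge 2^{k^{(1)}-1}\}$, and $2^{k^{(1)}-1}\ge1$. Theorem \ref{thm:smoothing} therefore applies with $\lambda=2^{k^{(1)}-1}$. Writing $\Delta^{(m)}_{mj+k_m}f_m=\Delta^{(m)}_{mj+k_m}\widetilde{\Delta}^{(m)}_{mj+k_m}f_m$ and using that $\Delta$ is bounded on $L^{p_m}$ uniformly, we get
\begin{equation*}
\| T_j (\Delta^{(1)}_{j+k_1} f_1, \Delta^{(2)}_{2j+k_2} f_2, \Delta^{(3)}_{3j+k_3} f_3) \|_{p} \le C 2^{-\delta k^{(1)}} \prod_{m=1}^3 \| \widetilde{\Delta}^{(m)}_{mj+k_m} f_m \|_{p_m}.
\end{equation*}
Here it matters that Theorem \ref{thm:smoothing} requires high frequency in only one of the three functions; the relatively low frequency of $f_1$ at scale $2^{j+k_1}$ causes no difficulty.

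\emph{Summing in $j$.} We apply the triangle inequality in $j$ and then H\"older's inequality in $j$ with exponents $(p_1,p_2,p_3)$, noting that $\sum_m 1/p_m=1/p\le 1$. Next we use the embedding $\ell^{2}\subset\ell^{p_m}$, valid because $p_m\ge2$, which is exactly where that hypothesis enters. Finally, the Littlewood--Paley square function bound for $\widetilde{\Delta}^{(m)}$ along the lacunary sequence $(mj+k_m)_j$ is uniform in $k_m$. Together these give
\begin{equation*}
\|T^{(k)}(\mathbf{f})\|_p\lesssim 2^{-\delta k^{(1)}}\prod_{m=1}^3\|f_m\|_{p_m}\le 2^{-\delta|k|/3}\prod_{m=1}^3\|f_m\|_{p_m}.
\end{equation*}

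\emph{Main obstacle.} The only real care needed is in the single-scale step. We must confirm that the constant in Theorem \ref{thm:smoothing} is uniform in $k$ after rescaling. It depends only on $\|\tilde\psi'\|_1$, on $a,b$, and on the exponents, none of which involve $k$ or $j$. We must also handle the two-sided support of $\psi(t)/t$ by the reflection argument above. In contrast with the complementary range $\mathfrak{F}_{\mathrm{MH},1,-}$ (where $k_1\le0$), no fiberwise use of the two-dimensional Theorem \ref{thm:smoothing_2} should be needed here.
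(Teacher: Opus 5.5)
Your single-scale step is exactly the paper's argument. The paper just defers to the proof of Lemma~\ref{lem:gain} and notes $2^{-\delta|k|}\simeq 2^{-\delta k^{(1)}}$ on $\mathfrak{F}_{\mathrm{MH},1,+}$. Your extra details are correct: $|k|\le 3k^{(1)}$, the choice $\lambda=2^{k^{(1)}-1}$, and the reflection $(x_1,x_2,x_3)\mapsto(-x_1,x_2,-x_3)$ handling the two-sided support of $\psi(t)/t$ in Theorem~\ref{thm:smoothing}.

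The gap is in the summation over $j$. After the triangle inequality in $L^p$ you must bound the $\ell^1$-sum
\[
\sum_{j\in\mathbb{Z}}\prod_{m=1}^3 a_{m,j},\qquad a_{m,j}=\|\widetilde{\Delta}^{(m)}_{mj+k_m}f_m\|_{p_m}.
\]
H\"older's inequality bounds this by $\prod_m\|a_m\|_{\ell^{p_m}}$ only if $\sum_m 1/p_m\ge 1$. Your justification ``$\sum_m 1/p_m=1/p\le 1$'' points the wrong way. For $a_{m,j}=\mathbf{1}_{\{|j|\le N\}}$ the left side is $2N+1$, while the right side is $(2N+1)^{1/p}$. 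So as written, the argument proves the lemma only for $p=1$. That is also all the paper's argument directly yields: the display in the proof of Lemma~\ref{lem:gain} bounds $\|T^{(k)}(\mathbf{f})\|_1$. Since the statement claims $p\in[1,\infty)$, you need one more step for $1<p<\infty$.

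One clean fix is interpolation. The proof of Lemma~\ref{lem:loss} applies verbatim to $k\in\mathfrak{F}_{\mathrm{MH},1,+}$. Lemma~\ref{lem:shifted_max} allows an arbitrary first function, so you may insert $\Delta^{(1)}_{j+k_1}f_1$ there, and it only uses $k_2,k_3>0$ with $|k_2-k_3|\lesssim 1$. This gives
\[
\|T^{(k)}(\mathbf f)\|_{q}\lesssim |k|^5\prod_n\|f_n\|_{q_n}
\]
for all $q\in[1,\infty)$ and $q_n\in(1,\infty)$ with $\sum_n 1/q_n=1/q$.

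Now take a target $x=(1/p_1,1/p_2,1/p_3)$ with $0<x_n\le 1/2$ and $\sum_n x_n<1$. Set $g=(\tfrac13,\tfrac13,\tfrac13)$ and $\ell=x+\epsilon(x-g)$. For small $\epsilon>0$, $\ell$ still lies in the loss region, since $0<\ell_n<1$ and $\sum_n\ell_n<\sum_n x_n<1$. Moreover $x=\tfrac{1}{1+\epsilon}\ell+\tfrac{\epsilon}{1+\epsilon}g$, and your $p=1$ gain holds at $g$. Multilinear interpolation then gives a bound $2^{-\delta'|k|}$ for some $\delta'>0$. Alternatively, observe that only the $p=1$ case of the lemma is needed in the subsequent interpolation proving \eqref{eq:final_gain_MH}.
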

\begin{lemma} \label{lem:gain_2-}
There exist constants $C < \infty$ and $\delta > 0$ such that, for all $k_2,k_3 \in {{\mathbb Z}}$ such that $\max(k_2,k_3) > 0$ and $|k_2 - k_3| \leq 100$, and every triple $\mathbf{f}$ of Schwartz functions,
\begin{equation} \label{eq:gain_2-}
\| T^{(k_2,k_3)}(\mathbf{f}) \|_{1} \leq C 2^{- \delta |\max(k_2,k_3)|} \|f_1\|_{\infty} \|f_2\|_{2} \|f_3\|_{2}.
\end{equation}
\end{lemma}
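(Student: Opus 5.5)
The plan is to exploit that $S^{(1)}_j f_1$ is essentially constant in the first variable at scale $2^{-j}$. This lets the first function be pulled out of the $t$-integral, fiberwise in $x$. What remains is a bilinear average in the $(y,z)$ variables along the curve $t\mapsto(t^2,t^3)$, which is controlled by Theorem \ref{thm:smoothing_2} with $(\alpha_1,\alpha_2)=(2,3)$.

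\emph{Step 1: the single-scale term.} Fix $j$ and set $g_2=\Delta^{(2)}_{2j+k_2}f_2$ and $g_3=\Delta^{(3)}_{3j+k_3}f_3$. Writing $S^{(1)}_j f_1(x+t,y,z)=\int f_1(x+s,y,z)\,2^j\widecheck{\varphi}(2^j(s-t))\,ds$, we get
\begin{equation*}
T_j(S^{(1)}_jf_1,g_2,g_3)(x,y,z)=\int_{\mathbb R} f_1(x+s,y,z)\,\mathbf{A}_{\Psi_{j,s}}\big(g_2(x,\cdot),g_3(x,\cdot)\big)(y,z)\,ds ,
\end{equation*}
where $\Psi_{j,s}(t)=\psi(2^jt)\,t^{-1}\,2^j\widecheck{\varphi}(2^j(s-t))$. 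The key point is that $f_1(x+s,y,z)$ does not depend on $t$. Hence
\begin{equation*}
\|T_j(\cdots)\|_1\le\|f_1\|_\infty\int_{\mathbb R}\int_{\mathbb R}\big\|\mathbf{A}_{\Psi_{j,s}}(g_2(x,\cdot),g_3(x,\cdot))\big\|_{L^1(\mathbb R^2)}\,dx\,ds .
\end{equation*}

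\emph{Step 2: rescaling and smoothing.} Rescale $t\to2^{-j}t$, $y\to2^{-2j}y$, $z\to2^{-3j}z$, $s\to 2^{-j}s'$. The cutoff becomes $\psi(t)t^{-1}\widecheck{\varphi}(s'-t)$, up to the harmless factors produced by the change of variables.

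\begin{itemize}
\item This cutoff is supported in $\{1/2\le|t|\le2\}$, and its $t$-derivative has $L^1$ norm $\lesssim_N(1+|s'|)^{-N}$.
\item Split it into the parts with $t>0$ and $t<0$. For the negative part, substitute $t\to-t$ and reflect $z\to-z$, so that both parts fit Theorem \ref{thm:smoothing_2} with $[a,b]=[1/2,2]$.
\item After rescaling, the function among $g_2,g_3$ with index $\max(k_2,k_3)$ has $(y$ or $z)$-frequency support in $\{|\xi|\ge 2^{\max(k_2,k_3)-1}\}$. Since $\max(k_2,k_3)>0$, we may take $\lambda=2^{\max(k_2,k_3)-1}$, or $\lambda=1$ in the few borderline cases.
\end{itemize}

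Theorem \ref{thm:smoothing_2} then gives, uniformly in $j$,
\begin{equation*}
\|T_j(S^{(1)}_jf_1,g_2,g_3)\|_1\lesssim\|f_1\|_\infty\,2^{-\delta\max(k_2,k_3)}\int_{\mathbb R}\|g_2(x,\cdot)\|_{2}\|g_3(x,\cdot)\|_{2}\,dx\int_{\mathbb R}(1+|s'|)^{-N}ds' .
\end{equation*}
Cauchy--Schwarz in $x$ bounds the $x$-integral by $\|g_2\|_2\|g_3\|_2$.

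\emph{Step 3: summing in $j$.} We bound $\sum_j\|\Delta^{(2)}_{2j+k_2}f_2\|_2\|\Delta^{(3)}_{3j+k_3}f_3\|_2$ by Cauchy--Schwarz in $j$. Plancherel and the bounded overlap of the annuli $\{|\xi_2|\sim 2^{2j+k_2}\}$, and likewise for $\xi_3$, then give the bound $\|f_2\|_2\|f_3\|_2$. This yields \eqref{eq:gain_2-}.

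I expect the main point of care to be Step 2. One must check that the rescaled cutoffs are genuinely $\mathcal C^1$ with support in a fixed compact subset of $(0,\infty)$ after the sign splitting. One must also check that the constant in Theorem \ref{thm:smoothing_2} enters only through $\|\psi'\|_1$, so that the Schwartz decay in $s'$ makes the $s$-integral converge uniformly in $j,k_2,k_3$. The remaining ingredients are Fubini, Hölder and Littlewood--Paley orthogonality.
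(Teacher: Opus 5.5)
Your proposal is correct and follows the paper's proof essentially step by step. You write out $S^{(1)}_j f_1$ as a convolution and bound it by $\|f_1\|_\infty$, apply Theorem \ref{thm:smoothing_2} fiberwise in $x$ to the cutoff $\psi(t)t^{-1}\widecheck{\varphi}(t-s)$, integrate in $s$ using the decay of $\widecheck{\varphi}$, and finish with Cauchy--Schwarz in $x$ and in $j$ plus Plancherel. Your explicit splitting of the cutoff into its $t>0$ and $t<0$ parts, with the reflection $z\to -z$ so that the hypothesis $\supp\psi\subset[a,b]\subset(0,\infty)$ is met, fills in a detail the paper leaves implicit.
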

\begin{rem}
It is worth noting that the triple $(\infty,2,2)$ or any of its permutations do not belong to the range of exponents in Theorem \ref{thm:KMPWW}, hence in Theorem \ref{thm:smoothing}.
\end{rem}
\begin{lemma} \label{lem:loss_2}
Let $p \in [1,\infty)$, $p_1, p_2, p_3 \in (1,\infty)$ satisfying the homogeneity condition in \eqref{eq:homogeneity}.

Then there exists a constant $C = C(p_1,p_2,p_3,p) < \infty$ such that, for all $k_2,k_3 \in {{\mathbb Z}}$ such that $\max(k_2,k_3) > 0$ and $|k_2 - k_3| \leq 100$, and every triple $\mathbf{f}$ of Schwartz functions,
\begin{equation*}
\| T^{(\max(k_2,k_3) - 100,k_2,k_3)}(\mathbf{f}) \|_{p} \leq C |k|^5 \prod_{n=1}^3 \|f_n\|_{p_n} .
\end{equation*}
\end{lemma}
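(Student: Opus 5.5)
The plan is to follow the proof of Lemma \ref{lem:loss} almost verbatim, with Lemma \ref{lem:shifted_max} supplying the pointwise single-scale bound. Write $k=(\max(k_2,k_3)-100,k_2,k_3)$, so that $|k|\simeq \max(k_2,k_3)$. The operator $T^{(k)}$ has the form \eqref{eq:decomposition_k}, with $f_1$ localized to $\Delta^{(1)}_{j+k_1}$ and $f_2,f_3$ localized to $\Delta^{(2)}_{2j+k_2}$ and $\Delta^{(3)}_{3j+k_3}$. Since $|k_2-k_3|\le 100$ and $\max(k_2,k_3)>0$, the pair $(\kappa,\lambda)=(k_2,k_3)$ is, after possibly using Lemma \ref{lem:shifted_max} with one of them possibly nonpositive, essentially in the regime of that lemma. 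Where $\min(k_2,k_3)\le 0$, the constraint $|k_2-k_3|\le 100$ forces $\max(k_2,k_3)\le 100$, so $|k|\lesssim 1$. In that case the kernel is Schwartz-like at unit scale and the bound is trivial by crude maximal function estimates.

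\textbf{Step 1 (single scale).} Apply Lemma \ref{lem:shifted_max} to $T_0(g_1,\Delta^{(2)}_{k_2}g_2,\Delta^{(3)}_{k_3}g_3)$ with $g_1=D\Delta^{(1)}_{j+k_1}f_1$. Here $f_1$ is left unlocalized inside the lemma and enters only through $\mathcal{M}^{(1)}_{\sigma_1}$. Rescaling by $D_{(2^j,2^{2j},2^{3j})}$ exactly as in \eqref{eq:single_scale}, we obtain
\begin{equation*}
|T_j(\Delta^{(1)}_{j+k_1}f_1,\Delta^{(2)}_{2j+k_2}f_2,\Delta^{(3)}_{3j+k_3}f_3)|\lesssim\sum_{\iota}a_\iota\,(\mathcal{M}^{(1)}_{\sigma_{1,\iota}}\Delta^{(1)}_{j+k_1}f_1)(\mathcal{M}^{(2)}_{\sigma_{2,\iota}}\widetilde{\Delta}^{(2)}_{2j+k_2}f_2)(\mathcal{M}^{(3)}_{\sigma_{3,\iota}}\widetilde{\Delta}^{(3)}_{3j+k_3}f_3),
\end{equation*}
using $\Delta=\Delta\widetilde\Delta$ to insert the tildes.

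\textbf{Step 2 (summing in $j$).} Sum over $j$. Apply H\"older in $j$ with exponents $(\infty,2,2)$, placing the supremum on the first factor, and then H\"older in $x$ with $(p_1,p_2,p_3)$. Bound $\sup_j\mathcal{M}_{\sigma}\Delta_{j+k_1}f_1\le C\mathcal{M}_\sigma \mathcal{M} f_1$. Its $L^{p_1}$ norm is at most $\log(2+|\sigma_1|)^{1/p_1}\|f_1\|_{p_1}$, using the shifted maximal bound composed with Hardy--Littlewood. For the second and third factors, use the vector-valued shifted maximal inequality of \cite{GHLR17} together with the Littlewood--Paley square function estimate, giving $\log(2+|\sigma_n|)^2\|f_n\|_{p_n}$.

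\textbf{Step 3 (summability).} Finally, \eqref{eq:summability} with $b=(1/p_1,2,2)$ yields $\sum_\iota a_\iota(\cdots)\lesssim \kappa^{5}\lesssim|k|^5$, which is the claimed bound.

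The only step needing care is checking that the hypotheses of Lemma \ref{lem:shifted_max} hold uniformly in this configuration. The lemma is stated for $\kappa,\lambda>0$ with $|\kappa-\lambda|<100$, whereas here $|k_2-k_3|\le 100$ includes equality and one index may be nonpositive. I would handle the boundary cases by noting that the proof of Lemma \ref{lem:shifted_max} only uses the scale $2^{-\max(\kappa,\lambda)}$ for the partition and that the frequency separation $100$ is immaterial, changing only constants. The small-$|k|$ cases are absorbed into the constant.
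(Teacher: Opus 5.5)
Your proposal is correct and follows the paper's route. The paper proves this lemma by rerunning the proof of Lemma \ref{lem:loss}: pointwise domination via Lemma \ref{lem:shifted_max} with $f_1$ unlocalized, rescaling, H\"older with the supremum on the first factor, the scalar and vector-valued shifted maximal bounds, and \eqref{eq:summability}. The paper places $S^{(1)}_{j+k_1}$ rather than $\Delta^{(1)}_{j+k_1}$ in the first slot, but your argument covers this unchanged, since you only use $\sup_j|\Delta^{(1)}_{j+k_1}f_1|\lesssim\mathcal{M}f_1$ and likewise $|S_j g|\le C\mathcal{M}g$.
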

Interpolating between the two types of bound, we obtain an estimate with exponential gain in $k \in \mathfrak{F}_{\mathrm{MH},1}$. Summing over $k \in \mathfrak{F}_{\mathrm{MH},1}$, we recover the desired estimate in \eqref{eq:final_gain_MH}. Note that for all fixed $k_2,k_3 \in {{\mathbb Z}}$, $k_2,k_3 > 100$ such that $|k_2 - k_3| \leq 100$ we have $(\max\{ k_2, k_3 \} - 100)$-many values of $k_1$ such that $k \in \mathfrak{F}_{\mathrm{MH},1,+}$, hence linearly many. This is different from the case of $\mathfrak{F}_{\mathrm{H}}$, where for all fixed $k_2,k_3 \in {{\mathbb Z}}$, $k_2,k_3 > 100$ such that $|k_2 - k_3| \leq 100$ we only have $100$-many values of $k_1$ such that $k \in \mathfrak{F}_{\mathrm{H}}$. However, the exponential gain $k \in \mathfrak{F}_{\mathrm{MH},1,+}$ absorbs the linear term, recovering summability in $k$.

\begin{proof}[Proof of Lemma \ref{lem:gain_2+}]
The proof is analogous to that of Lemma \ref{lem:gain}. Note that, for every $\delta > 0$ and every $k \in \mathfrak{F}_{\mathrm{MH},1,+}$, $2^{-\delta |k|} \simeq 2^{-\delta k^{(1)}}$.
\end{proof}

\begin{proof}[Proof of Lemma \ref{lem:gain_2-}]
We claim that it is enough to prove that, for every $j \in {{\mathbb Z}}$,
\begin{equation} \label{eq:mixedfreqpenult}
\| T_j(S^{(1)}_{j} f_1, \Delta^{(2)}_{2j+k_2} f_2, \Delta^{(3)}_{3j+k_3} f_3) \|_{1} \le C 2^{- \delta |\max(k_2,k_3)|} \|f_1\|_{\infty} \|f_2\|_{2} \|f_3\|_{2}.
\end{equation}
In fact, by the definition of $\widetilde{\Delta}$, the estimate in the previous display, and H\"{o}lder's inequality with exponents $(\infty,2,2)$ applied to the sum in $j \in {{\mathbb Z}}$, we have that, for all $k_2,k_3 \in {{\mathbb Z}}$ such that $\max(k_2,k_3) > 0$ and $|k_2 - k_3| \leq 100$,
\begin{equation*}
\| T^{(k_2,k_3)} (\mathbf{f}) \|_{1} \le C 2^{- \delta |\max(k_2,k_3)|} \sup_{j \in {{\mathbb Z}}} \| S_j^{(1)} f_1\|_{\infty} ( \sum_{j \in {{\mathbb Z}}} \| \widetilde{\Delta}^{(2)}_{2j+k_2} f_2\|^2_{2} )^{\frac{1}{2}} ( \sum_{j \in {{\mathbb Z}}} \| \widetilde{\Delta}^{(3)}_{3j+k_3} f_3 \|^2_{2} )^{\frac{1}{2}}.
\end{equation*}
Observing that there exists a constant $C< \infty$ such that, for every $j \in {{\mathbb Z}}$, $| S_j g | \leq C \mathcal{M} g $, the estimate for the Hardy--Littlewood maximal function and Plancherel identity yield the desired estimate in \eqref{eq:gain_2-}.

It only remains to show \eqref{eq:mixedfreqpenult}. By dilation invariance, we assume $j=0$. We have
\begin{align*}
& | T_0( S^{(1)}_0 f_1, \Delta^{(2)}_{k_2} f_2, \Delta^{(3)}_{k_3} f_3) | \\
& \le \int_{{{\mathbb R}}} | f_1 (x+s,y,z) | \Big| \int_{{{\mathbb R}}} \Delta^{(2)}_{k_2} f_2 (x,y+t^2,z) \Delta^{(3)}_{k_3} f_3 (x,y,z+t^3) \widecheck{\varphi} (t-s)
\psi(t) \frac{dt}{t} \Big| ds .
\end{align*}
By Fubini's theorem,
\begin{equation*}
\| T_0( S^{(1)}_0 f_1, \Delta^{(2)}_{k_2} f_2, \Delta^{(3)}_{k_3} f_3) \|_{1} \leq \| f_1 \|_{\infty} \int_{{{\mathbb R}}} \int_{{{\mathbb R}}} \Big\| \int_{{{\mathbb R}}} g_{x}(y+t^2,z) h_{x} (y,z+t^3) \rho_s(t) dt \Big\|_{L^1({{\mathbb R}}^2)} dx ds,
\end{equation*}
where $\rho_s(t) = \psi(t) t^{-1} \widecheck{\phi}(t-s)$, $g_{x} (y,z) = \Delta^{(2)}_{k_2} f_2 (x,y,z) $, and $h_{x} (y,z) = \Delta^{(3)}_{k_3} f_3 (x,y,z) $. By Theorem \ref{thm:smoothing_2},
\begin{equation*}
\Big\| \int_{{{\mathbb R}}} g_{x}(y+t^2,z) h_{x} (y,z+t^3) \rho_s(t) dt \Big\|_{L^1({{\mathbb R}}^2)} \lesssim ( \| \rho_s \|_{\infty} + \| \rho'_s \|_{1} ) 2^{- \delta |\max(k_2,k_3)|} \|g_{x}\|_{2} \|h_{x}\|_{2}.
\end{equation*}
By Cauchy--Schwarz inequality applied to the integral in $x$, the support of $\psi$, and the rapid decay of $\widecheck{\varphi}$, we obtain the desired estimate in \eqref{eq:mixedfreqpenult}.
\end{proof}

\begin{proof}[Proof of Lemma \ref{lem:loss_2}]
The proof is analogous to that of Lemma \ref{lem:loss}. We only need to replace $\Delta^{(1)}_{j+k_1}$ by $S^{(1)}_{j+k_1}$ and observe that there exists a constant $C< \infty$ such that, for every $j \in {{\mathbb Z}}$, we have$| S_j g | \leq C \mathcal{M} g $.
\end{proof}

\section{Smoothing inequalities} \label{sec:smoothing}

We start by restating the smoothing inequality from \cite[Thm. 6.1]{KMPWW24} in the special case we are concerned with.
For $N > 0$, let $A_{N}$ be the averaging operator defined by, for every triple $\mathbf{F} =(F_1,F_2,F_3)$ of Schwartz functions $F_1,F_2,F_3 \in \mathcal{S}({{\mathbb R}}^3)$,
\begin{equation}\label{eq:KMPW-real-average}
A_{N}(\mathbf{F})(x)
=N^{-1}\int_0^N \prod_{n=1}^3 F_n(x+t^n e_n) dt.
\end{equation}
\begin{theorem}[Kosz-Mirek-Peluse-Wan-Wright, special case of Thm. 6.1 in \cite{KMPWW24}]\label{thm:KMPWW}
Let $p \in [1,\infty)$, $p_1, p_2, p_3 \in (1,\infty)$ satisfying the homogeneity condition in \eqref{eq:homogeneity}, and let $C > 0$.
Then there exists $\delta = \delta (p,p_1,p_2,p_3,C) \in(0,1)$ such that the following holds. Let $N \geq 1$ and suppose the $j$-th partial Fourier transform of $F_j$ vanishes on
$\{\xi \colon |\xi_n|\leq N^{C-n} \}$ for at least one $n \in \{1,2,3\}$. Then, for every triple $\mathbf{F}$ of Schwartz functions,
\begin{equation}\label{eq:KMPW-bound}
\|A_{N}(F)\|_{p}
\lesssim N^{-c} \prod_{n=1}^3\|F_n\|_{p_n}.
\end{equation}
\end{theorem}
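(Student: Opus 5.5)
Theorem \ref{thm:KMPWW} is a citation of a known result, so the proof I would give is a verification: check that the operator $A_N$ in \eqref{eq:KMPW-real-average} is an instance of the multilinear polynomial averages treated in \cite[Thm. 6.1]{KMPWW24}, and translate their hypotheses and conclusion into the present notation. Concretely, I would take the ambient space ${{\mathbb R}}^3$ with the three coordinate subspaces ${{\mathbb R}} e_1, {{\mathbb R}} e_2, {{\mathbb R}} e_3$, and the polynomials $P_n(t)=t^n$, $n=1,2,3$. These have distinct degrees, which is the nondegeneracy hypothesis in their theorem. Each $F_n$ is translated only along $e_n$ by $P_n(t)$.

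First, I would match conventions. If their averages are written with $x-P_n(t)e_n$, or with $t$ ranging over $[-N,N]$ or over a smooth cutoff, I would pass between the formulations by the reflection $F_n\mapsto F_n(-\cdot)$ and by splitting $[0,N]$ against a partition of unity. All of these preserve $L^{p_n}$ norms and the Fourier support hypotheses.

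Second, I would match the frequency hypothesis. At scale $N$, the translation parameter in direction $e_n$ has size $N^n$, so the natural non-degeneracy condition in \cite{KMPWW24} is that $\widehat{F_n}$ (partial Fourier transform in $\xi_n$) vanishes on a neighbourhood $\{|\xi_n|\le N^{-n}\cdot N^{C'}\}$ for some fixed $C'$. This is exactly the form stated, with the constant $C$ in the statement playing the role of their loss parameter. The conclusion is then a power gain $N^{-c}$ with $c=c(p,p_1,p_2,p_3,C)>0$; here $c$ is the $\delta$ named in the statement, and I would clean up that notation. A rescaling $x_n\mapsto N^n x_n$ reduces everything to $N$-independent form if their theorem is phrased at unit scale.

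Third, and this is the step I expect to be the main obstacle, I would check the exponent range. If \cite[Thm. 6.1]{KMPWW24} only provides the gain for some exponents (for instance a single $L^2$-based tuple, or $p\ge 1$ with a restricted range of $p_n$), I would extend it to all $p\in[1,\infty)$, $p_n\in(1,\infty)$ satisfying \eqref{eq:homogeneity} by multilinear interpolation.
\begin{itemize}
\item The endpoint bound with no gain, $\|A_N(\mathbf{F})\|_p\le\prod_n\|F_n\|_{p_n}$, holds for every Hölder tuple by Minkowski's integral inequality and Hölder's inequality for each fixed $t$.
\item The frequency restriction on one $F_n$ is preserved by interpolation: I would fix the Fourier projection onto $\{|\xi_n|>N^{C-n}\}$, or use a smooth version of it bounded on all $L^{q}$, $1<q<\infty$.
\item Complex interpolation between a gain point and tuples near the boundary of the Hölder range then yields a gain $N^{-c}$ with $c$ depending on the tuple, on any open range of exponents.
\end{itemize}
The case $p=1$ with $p_n>1$ needs care, since the target is not interpolated directly. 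There I would interpolate in the dual trilinear-plus-one form, or use the restricted-type version if that is what \cite{KMPWW24} provides.
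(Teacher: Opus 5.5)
Your proposal is correct and matches the paper, which gives no argument beyond identifying $A_N$ as the special case of \cite[Thm.~6.1]{KMPWW24} with the commuting coordinate directions $e_1,e_2,e_3$ and the distinct-degree monomials $t,t^2,t^3$; the paper takes the full H\"older range $p\in[1,\infty)$, $p_n\in(1,\infty)$ directly from that theorem, so your interpolation fallback is not needed. The one point to phrase more carefully is that the cited result must be invoked with an arbitrary loss parameter $C>0$ (with $\delta$ depending on $C$), not a single fixed one: the frequency hypothesis weakens as $C$ decreases, so the content of the statement lies in small $C$.
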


\begin{proof}[Proof of Theorem \ref{thm:smoothing}]
For $\lambda < 1/a$, the desired estimate in \eqref{eq:smoothing} follows by Minkowski's and H\"{o}lder's inequalities.

Let $\lambda \geq 1/a$. For every $x \in {{\mathbb R}}^3$, let $D_\lambda x = (\lambda x_1, \lambda^2 x_2, \lambda^3 x_3)$, and for every triple $\mathbf{f}$ of Schwartz functions, let $(D_\lambda \mathbf{f}) (x) = \mathbf{f} (D_{\lambda^{-1}} x)$. We have
\begin{equation*}
\mathcal{A}_{\psi}(\mathbf{f})(x) = \mathcal{A}_{\psi_\lambda} (D_\lambda \mathbf{f})(D_\lambda x) ,
\end{equation*}
where $\psi_\lambda ( t) = \lambda^{-1} \psi (\lambda^{-1} t)$, and by assumption $\supp (\widehat{D_{\lambda} f_n} ) \subseteq \{ | \xi_n | \geq \lambda^{1-n} \} $ for at least one $n \in \{1,2,3\}$.

Integrating by parts,
\begin{equation*}
\mathcal{A}_{\psi_\lambda} (D_\lambda \mathbf{f})(y) = -\int_0^\infty t \psi'_\lambda(t) \Big( \frac{1}{t} \int_0^t \prod_{n=1}^3 D_{\lambda} f_n(y+s^n e_n) ds\Big) dt,
\end{equation*}
where we used the information on the support of $\psi$, namely $\supp (\psi) \subseteq (0,\infty)$. By Minkowski's inequality,
\begin{equation*}
\| \mathcal{A}_{\psi_\lambda}(D_\lambda \mathbf{f}) \|_{p} \lesssim \| t \psi_\lambda \|_{1} \sup_{t \in [a\lambda, b \lambda]} \| A_{t} (D_\lambda \mathbf{f}) \|_{p} ,
\end{equation*}
and $\| t \psi'_\lambda \|_{1} = \| t \psi' \|_{1} < \infty $.

By Theorem \ref{thm:KMPWW} with $C=1$ and $N = t$, we obtain
\begin{equation*}
\sup_{t \in [a\lambda, b \lambda]} \| A_{t} (D_\lambda \mathbf{f}) \|_{p} \lesssim \lambda^{-\delta} \prod_{n=1}^3 \|D_\lambda f_n\|_{p_n}.
\end{equation*}
By the homogeneity condition in \eqref{eq:homogeneity} for the exponents $p, p_n$, we recover the desired estimate in \eqref{eq:smoothing}.
\end{proof}

The proof of Theorem \ref{thm:smoothing_2} is analogous to that of Theorem 5 in \cite{CDR21}, keeping track of the dependence on the cutoff function $\psi$. This is also the route taken in the formalized Lean proof available on \href{https://github.com/roos-j/lean-dfr}{GitHub}.
Alternatively, it can also be derived in the same manner as Theorem \ref{thm:smoothing} from \cite[Thm. 6.1]{KMPWW24}.

\section{Anisotropic twisted paraproduct}\label{sec:twisted}
To prove Theorem \ref{thm:main_twist}, by duality it suffices to show that given $p_0\in (4,\infty)$, $p_1,p_2,p_3\in (1,4)$ with $\sum_{n=0}^3p_n^{-1}=1$,
there exists a constant $C>0$ such that for all Schwartz functions $f_0,f_1,f_2,f_3$,
\[\Big |\int_{{{\mathbb R}}^3}\int_{{{\mathbb R}}^3}f_0(x)\prod_{n=1}^3f_n(x+t_ne_n)K(t)dx dt \Big |\le C\prod_{n=0}^3 \|f_n\|_{p_n}\]

The proof of this estimate closely follows the proof of \cite[Theorem 2]{CDR21}, see also the foundational works \cite{Kov12}, \cite{Ber12}.

The first step is a decomposition of $\widehat{K}$ into pieces localized to cones in frequency space. By symmetry, this reduces matters to the forms $\Lambda_u$ defined below, in which the third coordinate carries a cancellative kernel, while the first two coordinates carry non-cancellative kernels, see Proposition \ref{prop:fwcz}. A fiber-wise Calder{\'o}n--Zygmund decomposition then reduces Proposition \ref{prop:fwcz} to the range of exponents in Proposition \ref{prop:twisted}.

To prove Proposition \ref{prop:twisted}, we localize $\Lambda_u$ to convex trees of anisotropic dyadic boxes in the space of translation and scale parameters. On a fixed tree, one application of the Cauchy--Schwarz inequality separates $f_3$ from $f_0,f_1,f_2$. The form involving $f_3$ is a square function, which is bounded on $L^2$ by Plancherel's theorem, while the form involving $f_0,f_1,f_2$ is dominated by a superposition of local cube forms with Gaussian kernels.

The cube forms are bounded by repeated applications of the Cauchy--Schwarz inequality, which symmetrize the tuple of input functions, combined with a telescoping identity which allows one to trade the form with the cancellative kernel in a given coordinate for the forms with cancellative kernels in the remaining coordinates, up to a remainder term. The remainder is controlled by a Brascamp--Lieb inequality in terms of the local $L^4$ averages of the input functions over the tree, and is the main reason for the restrictions in the exponent range in Proposition \ref{prop:twisted}. Finally, a stopping time argument sums the resulting estimates over all trees and yields the bound in terms of the $L^{p_j}$ norms.

To perform a cone decomposition of $\widehat{K}$, let
\[\g(x) = e^{-\pi x^2},\quad \h(x) = \g'(x).\]
Let $\psi$ be a Schwartz function with $\widehat{\psi}$ supported in $[-2,-1]\cup [1,2]$ and $\varphi$ a Schwartz function with $\widehat{\varphi}$ supported in $[-2,2]$. Denote
\[\varphi_{t,p}(x) = t^{-1}\varphi(t^{-1}(x-p))\]
and $\varphi_t = \varphi_{t,0}$. Then
\[(\varphi_{1,r})_{t,q} = \varphi_{t,q+rt}\]

For $u=(u_1,u_2,u_3)\in {{\mathbb R}}^3$, $i\in \{1,2,3\}$, and a tuple $f=(f_0,f_1,f_2,f_3)$ of Schwartz functions, define
\[\Lambda_u^{(i)}(f) = \int_{0}^\infty c_i(t,u) \int_{{{\mathbb R}}^3}
f_0(x)(f_i*_i(\h*\h*{\psi}_{1,u_i})_{t^{\alpha_i}})(x) \prod_{n=1,n\ne i}^3 (f_n*_n(\g*{\varphi}_{1,u_n})_{t^{\alpha_n}})(x) dx \frac{dt}{t}
\]
where $c_i$ are measurable coefficients satisfying $|c_i(t,u)|\le 1$, and the subscripts $*_i$ and $*_n$ denote the convolutions in $i$-th and $n$-th coordinates, respectively.
We write \[\Lambda_u = \Lambda_u^{(3)}.\]

The choice of Gaussians in the definition of $\Lambda^{(i)}_u$ is due to the heat equation, which is the basis of the telescoping identity in Lemma \ref{lem:telescoping_main} below. Since Gaussians are moreover positive, the telescoping identity can be used as an inequality in the proof of Proposition~\ref{prop:fullcubetree} below. Similarly, Gaussians were used, for example, in \cite{Dur15}.

Throughout the paper we denote
\[\langle u \rangle = 1 + |u_1| + |u_2| + |u_3|.\]
By symmetry considerations and the cone decomposition arguments as in \cite{CDR21}, Theorem \ref{thm:main_twist} follows from the following proposition.
\begin{prop} \label{prop:fwcz}
Let $p_0,p_1,p_2,p_3\in (1,\infty)$ satisfy
\[\tfrac{1}{p_0}<\tfrac{1}{4},\quad \tfrac{1}{p_3}>\tfrac{1}{4}, \quad \tfrac{1}{p_1}+\tfrac{1}{p_2}>\tfrac{1}{4},\quad \tfrac{1}{p_2}+\tfrac{1}{p_3}>\tfrac{1}{2}, \quad \tfrac{1}{p_1}+\tfrac{1}{p_3}>\tfrac{1}{2},\quad \sum_{n=0}^3\tfrac{1}{p_n}=1.\] There exists a constant $C>0$ such that for all Schwartz tuples $f=(f_0,f_1,f_2,f_3)$, $u\in {{\mathbb R}}^3$, and all measurable coefficients $c_3(t,u)$ satisfying $|c_3(t,u)|\le1$,
\[|\Lambda_u(f)|\le C \langle u\rangle^{100} \prod_{n=0}^3 \|f_n\|_{p_n}\]
\end{prop}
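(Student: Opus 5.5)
The plan is to deduce Proposition \ref{prop:fwcz} from a restricted-range estimate (Proposition \ref{prop:twisted} in the outline above). That estimate should cover $p_0,p_1,p_2,p_3$ with $p_1,p_2\in(1,4)$ close to the local $L^4$ regime and $p_3\in(2,4)$, say. The extension uses a fiber-wise Calder\'on--Zygmund decomposition, as in \cite[\S 3]{CDR21} and \cite{Ber12}. The guiding observation is the asymmetry in $\Lambda_u$. In the first two coordinates the kernels $(\g*\varphi_{1,u_n})_{t^{\alpha_n}}$ are non-cancellative averages. In the third coordinate the kernel $(\h*\h*\psi_{1,u_3})_{t^{\alpha_3}}$ has vanishing integral and in fact two vanishing moments. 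So $f_3$ can be lowered below $L^2$ by a Calder\'on--Zygmund decomposition in the $x_3$ variable alone, fiberwise in $(x_1,x_2)$. The functions $f_1,f_2$ instead interact with $f_0$ only through averages, and the pairs $(f_1,f_3)$ and $(f_2,f_3)$ must be handled jointly. This is what the conditions $\frac1{p_1}+\frac1{p_3}>\frac12$ and $\frac1{p_2}+\frac1{p_3}>\frac12$ encode.

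\textbf{Step 1 (restricted weak type reduction).} By multilinear interpolation for forms, it suffices to prove restricted weak type estimates near each vertex of the admissible region. Concretely, take sets $E_0,\dots,E_3$, $|f_n|\le \mathbf 1_{E_n}$ for $n\ne n_0$, and remove from $E_{n_0}$ an exceptional set where suitable maximal functions of the other $\mathbf 1_{E_n}$ are large. These are strong maximal functions adapted to the anisotropic dilations, together with directional maximal functions in $x_3$. Since $p_0>4$, the natural choice is to make $f_0$ or $f_3$ the exceptional function.

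\textbf{Step 2 (fiber-wise CZ decomposition).} For $f_3$ with $\frac1{p_3}$ large, decompose $f_3=g+\sum_I b_I$ on each line $\{(x_1,x_2)\}\times\mathbb R$. Here $g$ is bounded by the height, and the $b_I$ have mean zero on dyadic intervals $I$ in $x_3$. The good part is handled by Proposition \ref{prop:twisted} at larger $p_3$. For the bad part, split scales into $t^{\alpha_3}\lesssim |I|$ and $t^{\alpha_3}\gtrsim|I|$. For small $t$, the output is localized near $3I$ in $x_3$, and the exceptional set removed from $f_0$ kills it up to Schwartz tails. For large $t$, use the mean zero of $b_I$ and the smoothness of the kernel, which gives decay $(|I|/t^{\alpha_3})$. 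The translation parameter $u_3$ costs powers of $\langle u\rangle$, which is harmless given the allowed $\langle u\rangle^{100}$. The remaining inputs $f_1,f_2$ are controlled pointwise by maximal functions times $\mathbf 1$ of the sets. The analogous decompositions of $f_1$ or $f_2$ cannot use cancellation. Those cases are handled via $\frac1{p_1}+\frac1{p_2}>\frac14$, by placing the weight on $f_0$ through H\"older and the $L^4$-type local bounds of Proposition \ref{prop:twisted}.

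\textbf{Main obstacle.} The delicate step is the bad part of the large-scale term when $f_3$ is decomposed along one direction only. The CZ intervals depend on the fiber $(x_1,x_2)$, while the kernels in $x_1,x_2$ average over neighbouring fibers. So the exceptional set must be built from a maximal function of $\sum_I |I|^{-1}\mathbf 1_{3I}$ composed with averages in $x_1,x_2$. One must check that its measure is controlled by $\|f_3\|_{p_3}^{p_3}$ times heights via the strong maximal theorem, and that the summation over scales $t$ converges uniformly across fibers. I would try first to dominate the bad contribution by $\sum_I\sum_{t^{\alpha_3}>|I|}(|I|/t^{\alpha_3})\,\langle u\rangle^{C}$ times products of anisotropic strong maximal functions. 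Keeping track of which pairs $(f_n,f_3)$ need to be jointly above $L^2$ then yields exactly the stated conditions.
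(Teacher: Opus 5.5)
\textbf{There is a genuine gap: your plan cannot reach $p_1<4$ or $p_2<4$.} Your overall route is the one the paper indicates: deduce the statement from Proposition \ref{prop:twisted} by fiber-wise Calder\'on--Zygmund decompositions and multilinear interpolation, as in \cite{Ber12} and \cite[\S 4.5]{CDR21}. However, you assume the base estimate already covers $p_1,p_2\in(1,4)$. In fact Proposition \ref{prop:twisted} requires $p_0,p_1,p_2\in(4,\infty)$ and $p_3\in[2,\infty)$. Write $a_n=1/p_n$. A fiber-wise decomposition of $f_3$ alone only trades $a_0$ for $a_3$ with $a_1,a_2$ frozen, so everything you obtain by interpolation has $a_1,a_2<\tfrac14$. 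Points of the stated range such as $(a_0,a_1,a_2,a_3)=(\tfrac17,\tfrac27,\tfrac27,\tfrac27)$ are never reached, and these are exactly the points Theorem \ref{thm:main_twist} needs ($p_1=p_2=p_3=\tfrac72$). Your substitute for $f_1,f_2$ (``placing the weight on $f_0$ through H\"older and the $L^4$-type local bounds'') is not an argument.

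\textbf{The premise that $f_1,f_2$ need a cancellative kernel is false, and the fix is a CZ decomposition of each.} A Calder\'on--Zygmund decomposition does not need a cancellative kernel. Decompose $f_1$ along $x_1$ (resp.\ $f_2$ along $x_2$) fiberwise. At scales $t^{\alpha_1}\gg|I|$, the decay $|I|/t^{\alpha_1}$ (up to powers of $\langle u\rangle$) comes from the mean zero of the bad piece $b_I$ and the smoothness of $\g*\varphi_{1,u_1}$, just as for $f_3$. Small scales live near a $\langle u\rangle$-dilate of $I$ on the same fiber. They are killed by removing from $E_0$ the set where the one-dimensional maximal function $M^{(1)}\mathbf 1_{E_1}$ is large. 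In all three decompositions $f_0$ is the exceptional function. Also remove the sets where $M^{(n)}\mathbf 1_{E_n}\gtrsim|E_n|/|E_0|$. Then the two untouched inputs are bounded pointwise on the support of the restricted $f_0$ by shifted one-dimensional maximal functions. So the bad part satisfies the same restricted-type bound as the good part.

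\textbf{What the stated conditions mean.} With this fix, the stated region is exactly the convex hull of two pieces. The first is the base region, with vertices $(\tfrac14,\tfrac14,\tfrac14,\tfrac14)$, $(\tfrac14,\tfrac14,0,\tfrac12)$, $(\tfrac14,0,\tfrac14,\tfrac12)$, $(0,\tfrac14,\tfrac14,\tfrac12)$. The second is the set of endpoints obtained by moving all of $a_0$ into $a_1$, $a_2$ or $a_3$. For example, $(0,\tfrac12,\tfrac14,\tfrac14)$ and $(0,\tfrac12,0,\tfrac12)$ come from $f_1$, and $(0,\tfrac14,0,\tfrac34)$ comes from $f_3$. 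The conditions $\tfrac1{p_1}+\tfrac1{p_3}>\tfrac12$ and $\tfrac1{p_2}+\tfrac1{p_3}>\tfrac12$ are just $a_0+a_2<\tfrac12$ and $a_0+a_1<\tfrac12$, inherited from the base region. They do not say that pairs of functions must be treated jointly.

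\textbf{Your ``main obstacle'' does not occur.} The kernel acting on $f_3$ acts only in $x_3$, so the bad piece of $f_3$ on the fiber $(x_1,x_2)$ is tested only on that same fiber. The exceptional set $\{M^{(3)}\mathbf 1_{E_3}>\lambda\}$, built from the one-dimensional maximal function in $x_3$, has measure $\lesssim|E_3|/\lambda$ by Fubini. No strong maximal function or composed averages are needed.
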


By a fiber-wise Calder{\'o}n-Zygmund decomposition, Proposition \ref{prop:fwcz} will in turn be deduced from the following.
\begin{prop} \label{prop:twisted}
Let $p_0,p_1,p_2\in (4,\infty)$, $p_3\in [2,\infty)$, and $\sum_{n=0}^3\tfrac{1}{p_n}=1$. There exists a constant $C>0$ such that for all Schwartz tuples $f=(f_0,f_1,f_2,f_3)$, $u\in {{\mathbb R}}^3$, and all measurable coefficients $c_3(t,u)$ satisfying $|c_3(t,u)|\le1$,
\[|\Lambda_u(f)|\le C \langle u \rangle^{100}\prod_{n=0}^3 \|f_n\|_{p_n}\]
\end{prop}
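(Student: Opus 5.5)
We prove Proposition \ref{prop:twisted} following the scheme outlined at the start of this section, adapting \cite[Theorem 2]{CDR21} to three dimensions.

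\emph{Discretization and trees.} Write $\Lambda_u$ as a sum over anisotropic dyadic boxes $Q=I_1\times I_2\times I_3$ with $|I_n|=2^{\alpha_n k}$ (scale $t\sim 2^k$), restricting the $t$-integral to $[2^k,2^{k+1})$ and $x$ to $Q$. The factors $\langle u\rangle^{100}$ absorb the shifts $u_n$ in the Schwartz tails of $\varphi_{1,u_n}$ and $\psi_{1,u_3}$: these are dominated by Gaussian bumps centered at $u_n$, and we lose a polynomial in $\langle u\rangle$. We then organize the boxes into convex trees $\mathcal{T}$ with top $Q_{\mathcal{T}}$ via a stopping time. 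The stopping quantities are the local averages
\[
\Big(\tfrac{1}{|Q|}\int |f_n|^{4}\cdot\text{(tail weight)}\Big)^{1/4},\qquad n=0,1,2,
\]
together with an $L^2$-type average of $f_3$. Trees are selected so that these averages are comparable to $2^{-m_n}$ times the normalized $L^{p_n}$ quantity on the tree.

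\emph{Single-tree estimate.} On a fixed tree, Cauchy--Schwarz in $(x,t)$ separates $f_3$, which appears with $\h*\h*\psi$. The $f_3$ factor becomes a truncated square function bounded by $|Q_{\mathcal{T}}|^{1/2}$ times the $L^2$ average of $f_3$, via Plancherel in the third variable. The other factor is the form in $f_0 f_1 f_2$ with the non-cancellative kernels squared. We dominate it by a superposition of three-dimensional cube (box) forms with Gaussian kernels. The cube forms are controlled by repeated Cauchy--Schwarz steps that symmetrize each coordinate, as in \cite{Kov12,Ber12,CDR21}. We then apply the telescoping identity from the heat equation: $\partial_t$ of the Gaussian product equals a sum over $n$ of the $\h*\h$ terms in coordinate $n$. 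This trades the cancellative coordinate for the others, up to boundary terms at the tree's top and bottom scales. Positivity of the Gaussians lets us use the identity as an inequality. The boundary terms are positive local forms, bounded by a Brascamp--Lieb inequality in terms of the local $L^4$ averages of $f_0,f_1,f_2$ over $Q_{\mathcal{T}}$. This is why $p_0,p_1,p_2>4$ is required. The result is the single-tree bound
\[
|\Lambda_{\mathcal{T}}|\lesssim \langle u\rangle^{C}|Q_{\mathcal{T}}|\prod_{n=0}^2 \mathrm{avg}_4(f_n)\,\mathrm{avg}_2(f_3).
\]

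\emph{Summation.} We sum over trees by a standard stopping-time and counting argument. For each level $(m_0,\dots,m_3)$, the sum of $|Q_{\mathcal{T}}|$ is bounded by $\min_n 2^{p_n m_n}$, using maximal function bounds for $L^4$ averages (valid since $p_n>4$) and for $L^2$ averages of $f_3$ (valid since $p_3\ge2$, with the square-function localization handled via a John--Nirenberg type packing). Interpolating the minimum as a weighted geometric mean with weights $p_n^{-1}$, which sum to $1$, gives a summable geometric series. The endpoint $p_3=2$ needs care; we would first obtain $p_3>2$ and then handle the endpoint by the $L^2$ packing directly.

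\emph{Main obstacle.} We expect the hardest step to be the three-dimensional cube-form estimate: making the symmetrization and telescoping close with only Brascamp--Lieb-controllable remainders in the anisotropic setting, and keeping track of the shifts $u$ polynomially.
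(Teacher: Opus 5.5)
Your architecture (stopping time, Cauchy--Schwarz on each tree, Gaussian cube forms with telescoping and Brascamp--Lieb, summation over trees) is the paper's. There is, however, a genuine gap in the summation, and it sits exactly at the endpoint $p_3=2$, which the statement includes.

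\textbf{Summation and the endpoint.} Suppose you bound the total measure of the tree tops at level $\mathbf m$ by the weak-type quantity $\min_n 2^{p_n m_n}$ and then take the geometric mean with weights $1/p_n$. The exponents cancel exactly, so every term is $O(1)$ and the series over $\mathbf m\in\mathbb Z^4$ diverges. The standard argument behind Lemma~\ref{lem:abstract-stopping-time} needs two inputs instead:
\begin{itemize}
\item the disjointness of the maximal boxes;
\item the strong-type bound $\sum_m 2^{qm}|\{(M|g|^s)^{1/s}>2^{m}\}|\lesssim\|g\|_q^q$, which requires $s<q$ strictly.
\end{itemize}
With $s_3=2$ this holds for $p_3>2$. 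It fails at $p_3=2$, since $(M|g|^2)^{1/2}$ is not bounded on $L^2$. So ``valid since $p_3\ge 2$'' is false, and ``handle the endpoint by the $L^2$ packing directly'' is not yet an argument.

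The missing idea is the paper's endpoint treatment:
\begin{itemize}
\item Apply Cauchy--Schwarz over the family of trees, not inside each tree's product of sizes.
\item Bound $\sum_{\mathcal T}\Lambda_{\mathcal T,u,2}(f_3)=\Lambda_{\mathcal Q_0,u,2}(f_3)\lesssim\|f_3\|_2^2$ globally by Plancherel. This uses only that the regions $\Omega_{\mathcal T}$ partition $\Omega_{\mathcal Q_0}$, and no size of $f_3$.
\item Sum the other factor via the tree bound $|Q_{\mathcal T}|\prod_{j\le 2}\mathcal M_{\mathcal T,4}(f_j)^2=|Q_{\mathcal T}|\prod_{j\le 2}\mathcal M_{\mathcal T,2}(f_j^2)$. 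Then apply the stopping-time lemma with $g_j=f_j^2$ and $s_j=2<q_j=p_j/2$. The condition $\sum_{j=0}^2 1/q_j=1$ holds precisely because $p_3=2$.
\end{itemize}

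\textbf{Tree-level Cauchy--Schwarz.} As you state it, this step fails. If all of $\h*\h*\psi$ stays on $f_3$, the $f_0f_1f_2$ factor carries only non-cancellative kernels. It is then a positive form of size comparable to (number of scales in the tree)$\times|Q_{\mathcal T}|$; take $f_0,f_1,f_2$ equal to $1$ on a large neighbourhood of $Q_{\mathcal T}$ to see this. There is then no cancellative coordinate left for the telescoping identity to trade.

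The fix, as in Lemma~\ref{lem:lambda12-cs}:
\begin{itemize}
\item Write $(\h*\h*\psi_{1,u_3})_{t^{\alpha_3}}$ as a $p_3$-integral of $\h_{t^{\alpha_3},p_3}\otimes(\h*\psi_{1,u_3})_{t^{\alpha_3},p_3}$, and split $\g*\varphi_{1,u_n}$ likewise for $n=1,2$.
\item Localize the parameters $(p,t)$, not $x$, to $\Omega_{\mathcal T}$.
\item Apply Cauchy--Schwarz with the $x_3$- and $y_3$-integrals kept inside the squares.
\end{itemize}
Only then does the $f_0f_1f_2$ factor expand into $\Theta^{(3)}_{\mathcal T,\lambda,0}$, with $\h\otimes\h$ in the third coordinate, to which Proposition~\ref{prop:fullcubetree} applies.

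With that split in place, your local bound for the $f_3$ factor is a valid alternative route. Plancherel localized to the Carleson box over $Q_{\mathcal T}$, plus a tail estimate, replaces the paper's edge telescoping and edge Brascamp--Lieb argument in Proposition~\ref{prop:f3bd}.
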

The fiber-wise Calderón–Zygmund decomposition goes back to \cite{Ber12}. Its use here is a straightforward adaptation of the arguments in \cite[Section 4.5]{CDR21}. We omit the details.

Proposition \ref{prop:twisted} will be proved using twisted techniques, adapting the ideas from \cite{CDR21, DR21}. The rest of this section is devoted to the proof of Proposition \ref{prop:twisted}.

\subsection{Brascamp-Lieb estimates}
We will use the following estimates, which are the main reason for the restrictions in the range of exponents in Proposition \ref{prop:twisted}.
Let \[\theta(x)= (1+|x|)^{-10},\quad \;x\in{{\mathbb R}}\]
and for $t>0$ denote
\[\Theta_{t} = \theta_{t^{\alpha_1}}\otimes \theta_{t^{\alpha_2}}\otimes \theta_{t^{\alpha_3}} \]
Let $J=\{0,1\}^3$. For $j=(j_1,j_2,j_3)\in J$ and $x=(x_1^0,x_2^0,x_3^0,x_1^1,x_2^1,x_3^1)$, define
\[\Pi_j(x) = (x_1^{j_1},x_2^{j_2},x_3^{j_3}),\]
and write $p=(p_1,p_2,p_3)$.
\begin{lemma}[Full cube Brascamp-Lieb]
\label{lem:BL4}
For any $(p,t)\in {{\mathbb R}}^3 \times (0,\infty)$
and a tuple $(F_j)_{j\in J}$ of non-negative Schwartz functions,
\[\int_{{{\mathbb R}}^6}\Big( \prod_{j\in J} F_j(\Pi_j(x)) \Big) \prod_{i=1}^3\prod_{b=0}^1
\theta_{t^{\alpha_i},p_i}(x_i^b)
\, dx \le \prod_{j\in J} \Big( F_j^4*\Theta_t(p) \Big)^\frac{1}{4}\]
\end{lemma}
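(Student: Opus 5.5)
We would prove the lemma directly, by a Hölder step in the third coordinate followed by two Cauchy--Schwarz steps in the first two coordinates. No rescaling is needed. Since $\theta$ is even, $\theta_{t^{\alpha_i},p_i}(y)=\theta_{t^{\alpha_i}}(p_i-y)$, and therefore
\[
\int_{{{\mathbb R}}^3} F^4(y)\prod_{i=1}^3\theta_{t^{\alpha_i},p_i}(y_i)\,dy = F^4*\Theta_t(p).
\]
Write $d\mu_i(y)=\theta_{t^{\alpha_i},p_i}(y)\,dy$. These are positive measures on ${{\mathbb R}}$, and the left-hand side of the lemma is $\int \prod_{j\in J}F_j(\Pi_j(x))\,\prod_{i,b}d\mu_i(x_i^b)$. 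We never use that the $\mu_i$ have mass at most one; only Hölder and Cauchy--Schwarz with respect to arbitrary positive measures enter.

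\emph{Step 1 (third coordinate).} Fix $x_1^0,x_1^1,x_2^0,x_2^1$. The integral over $x_3^0$ involves the four functions $F_j$ with $j_3=0$. By Hölder's inequality with four exponents equal to $4$ in $L^4(\mu_3)$, it is bounded by $\prod_{j:j_3=0}G_j(x_1^{j_1},x_2^{j_2})^{1/4}$, where
\[
G_j(y_1,y_2)=\int F_j^4(y_1,y_2,y_3)\,d\mu_3(y_3).
\]
The integral over $x_3^1$ is treated in the same way. For each vertex $v=(v_1,v_2)\in\{0,1\}^2$ set
\[
P_v = G_{(v,0)}^{1/4}\,G_{(v,1)}^{1/4}.
\]
The left-hand side is then at most the two-dimensional cube form
\[
\int \prod_{v\in\{0,1\}^2} P_v(x_1^{v_1},x_2^{v_2})\,d\mu_1(x_1^0)\,d\mu_1(x_1^1)\,d\mu_2(x_2^0)\,d\mu_2(x_2^1).
\]

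\emph{Step 2 (second coordinate).} Fix $x_1^0,x_1^1$. The $x_2^0$-integral is $\int P_{00}(x_1^0,\cdot)P_{10}(x_1^1,\cdot)\,d\mu_2$. By Cauchy--Schwarz in $L^2(\mu_2)$ it is at most $A_{00}(x_1^0)^{1/2}A_{10}(x_1^1)^{1/2}$, where $A_v(y_1)=\int P_v(y_1,\cdot)^2\,d\mu_2$. The $x_2^1$-integral is bounded in the same way. The resulting expression factorizes as
\[
\Big(\int A_{00}^{1/2}A_{01}^{1/2}\,d\mu_1\Big)\Big(\int A_{10}^{1/2}A_{11}^{1/2}\,d\mu_1\Big).
\]

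\emph{Step 3 (first coordinate).} One more Cauchy--Schwarz in $L^2(\mu_1)$ bounds the last display by $\prod_v \big(\int A_v\,d\mu_1\big)^{1/2}$. Finally,
\[
\int A_v\,d\mu_1=\int G_{(v,0)}^{1/2}G_{(v,1)}^{1/2}\,d\mu_1\,d\mu_2\le \Big(\int G_{(v,0)}\Big)^{1/2}\Big(\int G_{(v,1)}\Big)^{1/2},
\]
again by Cauchy--Schwarz. Moreover $\int G_j\,d\mu_1\,d\mu_2=F_j^4*\Theta_t(p)$. Collecting the exponents, each $j\in J$ appears with power $\tfrac12\cdot\tfrac12=\tfrac14$, which is exactly the claimed bound.

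The only point requiring care is the bookkeeping that keeps the right function attached to the right copy of each variable. The key fact is that after Step 1 the two functions sharing a two-dimensional vertex multiply into a single function $P_v$, so the remaining form is the standard two-dimensional box form. All integrals are finite because the $F_j$ are Schwartz and the $\theta$'s are integrable, so there is no real obstacle here.
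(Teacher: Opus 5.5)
Your proof is correct, but it takes a different route from the paper's.

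\emph{Your route.} You argue fiberwise. H\"older with four $L^4(\mu_3)$ factors on each of the $x_3^0$- and $x_3^1$-fibers collapses the three-dimensional cube form to the two-dimensional box form for the functions $P_v$. Three further rounds of Cauchy--Schwarz then finish the argument: in $x_2^0,x_2^1$, in $x_1^0,x_1^1$, and finally in $(y_1,y_2)$. At each stage the integrand splits into a function of one copy of a variable times a function of the other copy, so your factorizations are legitimate. Every $j\in J$ ends up with exponent $\tfrac12\cdot\tfrac12=\tfrac14$.

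\emph{The paper's route.} The paper instead works on all of $\mathbb{R}^6$ at once. It pairs each $j\in J_0=\{0\}\times\{0,1\}^2$ with its antipode $\bar j=(1,1,1)-j$ and sets $G_j(x)=F_j(\Pi_j(x))F_{\bar j}(\Pi_{\bar j}(x))$. It then applies two Cauchy--Schwarz inequalities with respect to the full product measure, which amounts to H\"older with four $L^4(\mu)$ factors. Because $\Pi_j$ and $\Pi_{\bar j}$ read off complementary triples of variables, $\int G_j^4\,d\mu$ factors exactly as $(F_j^4*\Theta_t)(p)\,(F_{\bar j}^4*\Theta_t)(p)$.

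\emph{Comparison.} The paper's version is shorter and explains the exponent $4$ at a glance, since four antipodal pairs exhaust $J$, but it relies on noticing the pairing. Yours needs no such observation and reduces the problem to the familiar two-dimensional box inequality, at the price of more bookkeeping. As you remark, neither argument uses that $\theta$ has total mass at most one.
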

\begin{proof}
Denote
\begin{equation}
\label{measure_def}
d\mu(x)
=
\prod_{i=1}^3\prod_{b=0}^1d\mu_i^b(x_i^b), \quad d\mu_i^b(u)=\theta_{t^{\alpha_i},p_i}(u)\,du.
\end{equation}
For $j\in J$, let
\[
\overline{j}=(1,1,1)-j
\]
denote the vertex opposite to $j$. For
$j\in
J_0=\{0\}\times\{0,1\}^2$
let
\[
G_j(x)
=
F_j(\Pi_j(x))F_{\overline j}(\Pi_{\overline j}(x)).
\]
The left-hand side of Lemma \ref{lem:BL4} equals
\[
\int_{{{\mathbb R}}^6}
\prod_{j\in J_0} G_{j}(x)\,d\mu(x).
\]
By the Cauchy--Schwarz inequality, this is
\[
\leq
\Big(
\int_{{{\mathbb R}}^6}G_{(0,0,0)}(x)^2G_{(0,0,1)}(x)^2\,d\mu(x)
\Big)^{1/2}
\Big(
\int_{{{\mathbb R}}^6}G_{(0,1,0)}(x)^2G_{(0,1,1)}(x)^2\,d\mu(x)
\Big)^{1/2}.
\]
By the Cauchy--Schwarz inequality applied to the two factors on the
right-hand side, the last display is bounded by
\[
\prod_{j\in J_0}
\Big(
\int_{{{\mathbb R}}^6}G_j(x)^4\,d\mu(x)
\Big)^{1/4} = \prod_{j\in J_0}
(F_j^4*\Theta_t)(p)^{1/4}
(F_{\overline j}^4*\Theta_t)(p)^{1/4}
= \prod_{j\in J} (F_j^4*\Theta_t(p))^{1/4}.\]
\end{proof}
We also have the following improved estimate for the special case
in which all but two of the functions are identically equal to $1$.

\begin{lemma}[Edge Brascamp-Lieb]
\label{lem:BL2}
For any $(p,t)\in {{\mathbb R}}^3\times (0,\infty)$ and non-negative Schwartz functions $F_0,F_1$,
\[\int_{{{\mathbb R}}^4}F_0(x_1,x_2,x_3^0){F}_1(x_1,x_2,x_3^1) \theta_{t^{\alpha_1},p_1}(x_1) \theta_{t^{\alpha_2},p_2}(x_2) \]
\[\times \theta_{t^{\alpha_3},p_3}(x_3^0)\theta_{t^{\alpha_3},p_3}(x_3^1)
\, dx_1 dx_2 dx_3^0 dx_3^1 \le \big(F_0^2*\Theta_t(p)\big)^{1/2}
\big(F_1^2*\Theta_t(p)\big)^{1/2}. \]
\end{lemma}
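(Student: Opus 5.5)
The plan is to run the argument of Lemma \ref{lem:BL4} in a reduced form, with a single application of the Cauchy--Schwarz inequality. Write $d\mu$ for the product measure
\[
d\mu = \theta_{t^{\alpha_1},p_1}(x_1)\,\theta_{t^{\alpha_2},p_2}(x_2)\,\theta_{t^{\alpha_3},p_3}(x_3^0)\,\theta_{t^{\alpha_3},p_3}(x_3^1)\,dx_1\,dx_2\,dx_3^0\,dx_3^1
\]
on ${{\mathbb R}}^4$. The left-hand side is then $\int F_0(x_1,x_2,x_3^0)\,F_1(x_1,x_2,x_3^1)\,d\mu$.

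Apply Cauchy--Schwarz in $L^2(d\mu)$ to the two factors $F_0(x_1,x_2,x_3^0)$ and $F_1(x_1,x_2,x_3^1)$. This bounds the left-hand side by
\[
\Big(\int F_0(x_1,x_2,x_3^0)^2\,d\mu\Big)^{1/2}\Big(\int F_1(x_1,x_2,x_3^1)^2\,d\mu\Big)^{1/2}.
\]

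In the first factor the integrand does not depend on $x_3^1$, so that variable integrates out. Since $\theta$ is even, the remaining integral is
\[
\int_{{{\mathbb R}}^3} F_0^2(y)\,\Theta_t(p-y)\,dy = F_0^2*\Theta_t(p).
\]
Integrating out $x_3^1$ produces the factor $\int \theta_{t^{\alpha_3},p_3} = \|\theta\|_1$, with $\theta_s(x)=s^{-1}\theta(s^{-1}x)$ the $L^1$-normalized dilate as in the paper's convention. Here $\|\theta\|_1 = \int (1+|x|)^{-10}\,dx = 2/9 \le 1$, so this factor can simply be dropped from the upper bound. The same computation with $x_3^0$ integrated out handles the $F_1$ factor and gives the claimed bound.

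There is no real obstacle. The only points to check are that $\theta$ is even, so the convolution is correctly oriented, and that $\|\theta\|_1\le 1$, so the discarded factor is harmless; the same convention was already used in the proof of Lemma \ref{lem:BL4}.
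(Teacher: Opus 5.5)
Your proof is correct and follows the paper's argument: a single Cauchy--Schwarz inequality in $L^2(d\mu)$, then integrating out the variable that each factor does not depend on, and discarding the factor $\|\theta\|_1 = 2/9 \le 1$. Your check that $\theta$ is even, so that the remaining integral is exactly $F_b^2*\Theta_t(p)$, is a detail the paper leaves implicit.
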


\begin{proof}
For $b\in\{0,1\}$ denote
\[
G_b(x)
=
F_b(x_1,x_2,x_3^b)
\]
and let
\[d\mu(x)
=
d\mu_1^0(x_1)d\mu_2^0(x_2)
d\mu_3^0(x_3^0)d\mu_3^1(x_3^1)\]
where $d\mu^b_i$ are defined as in \eqref{measure_def}.

By the Cauchy--Schwarz inequality, we obtain for the form on the left-hand side
\[
\int_{{{\mathbb R}}^4}G_0(x)G_1(x)\,d\mu(x) \le \Big(
\int_{{{\mathbb R}}^4}G_0(x)^2\,d\mu(x)
\Big)^{1/2}
\Big(
\int_{{{\mathbb R}}^4}G_1(x)^2\,d\mu(x)
\Big)^{1/2}
\]
We compute the first factor. Since $G_0$ is independent of $x_3^1$,
Fubini's theorem gives
\[
\int_{{{\mathbb R}}^4}G_0(x)^2\,d\mu(x)
= \Big(
\int_{{{\mathbb R}}}
\theta_{t^{\alpha_3},p_3}(x_3^1)\,dx_3^1
\Big) \]
\[\times \int_{{{\mathbb R}}^3}
F_0(x_1,x_2,x_3^0)^2
\theta_{t^{\alpha_1},p_1}(x_1)
\theta_{t^{\alpha_2},p_2}(x_2)
\theta_{t^{\alpha_3},p_3}(x_3^0)
\,dx_1\,dx_2\,dx_3^0 \]
Since the integral of $\theta$ is at most one, this is bounded by
\[F_0^2*\Theta_t(p). \]
With a similar computation for the factor involving $G_1$ we obtain
\[\int_{{{\mathbb R}}^4}G_0(x)G_1(x)\,d\mu(x) \le (F_0^2*\Theta_t(p))^{1/2}
(F_1^2*\Theta_t(p))^{1/2}.\]
\end{proof}

\subsection{Local cube forms}
Proposition \ref{prop:twisted} will follow from an estimate for a local version of $\Lambda_u$, combined with a stopping time argument.
The first step towards local estimates for $\Lambda_u$ is to prove bounds for local cube forms.
We follow the localization setup from \cite{CDR21}.

For $k\in {{\mathbb Z}}$ let $Q=I_1\times I_2\times I_3$ be a dyadic box with $I_i$ dyadic intervals of length $2^{k\alpha_i}$, $\alpha_i \in {{\mathbb Z}}_+$. Denoting $\ell(Q)=2^k$, such a box partitions into $2^{\alpha_1+\alpha_2+\alpha_3}$ many children $Q_i$, each with $\ell(Q_i)=2^{k-1}$. For each $k$, these boxes partition ${{\mathbb R}}^3$. For any collection of such boxes $\mathcal{Q}$ we introduce a region in ${{\mathbb R}}^3\times (0,\infty)$
$$\Omega_\mathcal{Q} = \bigcup_{Q\in \mathcal{Q}} Q\times \big [\tfrac{\ell(Q)}{2},\ell(Q) \big ].$$

A collection of dyadic boxes $\mathcal{Q}$ is called {\em convex} if for any dyadic boxes $Q,Q',Q''$ we have that $Q \subseteq Q' \subseteq Q''$ and $Q,Q''\in \mathcal{Q}$ imply $Q'\in \mathcal{Q}$.
A finite collection of dyadic boxes $\mathcal{T}$ is called a {\em tree} if there exists $Q_\mathcal{T}\in \mathcal{T}$ which satisfies $Q\subseteq Q_\mathcal{T}$ for all $Q\in \mathcal{T}$.
By $\mathcal{L}(\mathcal{T})$ we denote the set of those boxes which do not belong to $\mathcal{T}$, but whose parent belongs to ${{\mathcal T}}$. Note that the set $\mathcal{L}(\mathcal{T})$ partitions $Q_\mathcal{T}$.

For a convex tree ${{\mathcal T}}$, $\lambda>1$, $r\in {{\mathbb R}}$, and a tuple of bounded real-valued measurable functions $F=(F_j)_{j\in J}$
we introduce the local forms
\[\Theta^{(1)}_{{{\mathcal T}},\lambda,r}(F) = \int_{\Omega_{{\mathcal T}}} \int_{{{\mathbb R}}^6} \Big( \prod_{j\in J} F_j(\Pi_j(x)) \Big) \h_{\lambda t^{\alpha_1},p_1}(x_1^0)\h_{\lambda t^{\alpha_1},p_1}(x_1^1) \]
\[\times \, \g_{\lambda t^{\alpha_2},p_2}(x_2^0)\g_{\lambda t^{\alpha_2},p_2}(x_2^1) \g_{t^{\alpha_3},p_3+rt^{\alpha_3}}(x_3^0)\g_{t^{\alpha_3},p_3+rt^{\alpha_3}}(x_3^1)
\, dx \, dp\, \frac{dt}{t}\]
\[\Theta^{(2)}_{{{\mathcal T}},\lambda,r}(F) = \int_{\Omega_{{\mathcal T}}} \int_{{{\mathbb R}}^6} \Big( \prod_{j\in J} F_j(\Pi_j(x)) \Big) \g_{\lambda t^{\alpha_1},p_1}(x_1^0)\g_{\lambda t^{\alpha_1},p_1}(x_1^1) \]
\[\times \, \h_{\lambda t^{\alpha_2},p_2}(x_2^0)\h_{\lambda t^{\alpha_2},p_2}(x_2^1) \g_{t^{\alpha_3},p_3+rt^{\alpha_3}}(x_3^0)\g_{t^{\alpha_3},p_3+rt^{\alpha_3}}(x_3^1)
\, dx\, dp \, \frac{dt}{t}\]
\[\Theta^{(3)}_{{{\mathcal T}},\lambda,r}(F) = \int_{\Omega_{{\mathcal T}}} \int_{{{\mathbb R}}^6} \Big( \prod_{j\in J} F_j(\Pi_j(x)) \Big) \g_{\lambda t^{\alpha_1},p_1}(x_1^0)\g_{\lambda t^{\alpha_1},p_1}(x_1^1) \]
\[\times \, \g_{\lambda t^{\alpha_2},p_2}(x_2^0)\g_{\lambda t^{\alpha_2},p_2}(x_2^1)\h_{t^{\alpha_3},p_3+rt^{\alpha_3}}(x_3^0)\h_{t^{\alpha_3},p_3+rt^{\alpha_3}}(x_3^1)
\, dx \, dp \frac{dt}{t}\]
We also define
\begin{equation}
\mathfrak{C}_{\mathcal T}(F)
=
\sup_{(p,t)\in\overline{\Omega_{\mathcal T}}}
\int_{{{\mathbb R}}^6}
\Big(\prod_{j\in J}|F_j(\Pi_j(x))|\Big)
\prod_{i=1}^3\prod_{b=0}^1
\theta_{t^{\alpha_i},p_i}(x_i^b)\,dx.
\label{E:localized-cube-functional}
\end{equation}

\begin{lemma}[The telescoping identity] \label{lem:telescoping_main}
Let $\lambda>1$ and $r\in{{\mathbb R}}$. There exists a constant
$
C_{\lambda,r}=O(\lambda^{37}(1+|r|)^{21})
$
such that for every convex tree $\mathcal T$, there is a multilinear
form $\mathcal R_{\mathcal T,\lambda,r}$ satisfying
\begin{equation}
\sum_{i=1}^3\alpha_i\Theta^{(i)}_{\mathcal T,\lambda,r}(F)
=\mathcal R_{\mathcal T,\lambda,r}(F)
\label{E:general-telescoping-identity}
\end{equation}
and
\begin{equation}
|\mathcal R_{\mathcal T,\lambda,r}(F)|
\le C_{\lambda,r}|Q_{\mathcal T}|\,
\mathfrak{C}_{\mathcal T}(F).
\label{E:general-telescoping-remainder}
\end{equation}
\end{lemma}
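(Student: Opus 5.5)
The plan is to derive the identity from the heat equation for the Gaussian and the fundamental theorem of calculus in the scale variable $t$, integrated over the region $\Omega_{\mathcal T}$. For fixed $(p,t)$, consider the inner integral
\[
\Phi(p,t)=\int_{{{\mathbb R}}^6}\Big(\prod_{j\in J}F_j(\Pi_j(x))\Big)\prod_{b=0}^1\g_{\lambda t^{\alpha_1},p_1}(x_1^b)\,\g_{\lambda t^{\alpha_2},p_2}(x_2^b)\,\g_{t^{\alpha_3},p_3+rt^{\alpha_3}}(x_3^b)\,dx .
\]
For a single coordinate, $s\partial_s\big(\g_{s,q}(y)\g_{s,q}(y')\big)$ splits into a sum of terms of the form $\g\cdot(y-q)s^{-1}\g'$ and $\g\cdot\g$. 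Using the heat equation for $\g$, and after integrating by parts in $p_i$ (since $\partial_{p_i}$ of the product equals $-s^{-1}$ times a sum of $\h\g$ terms), one expresses $s\partial_s(\g_s\otimes\g_s)$ as a constant multiple of $(\h_s\otimes\h_s)$ plus a pure $p_i$-divergence, namely $s^2\partial_{p_i}^2(\g_s\otimes\g_s)$ suitably normalized. The concrete identity I will use is $\partial_s$ of $\g_{s,q}(y)\g_{s,q}(y')$ in terms of $\partial_q^2$ and $\h\h$. Because the box in coordinate $i$ has side $t^{\alpha_i}$, the chain rule gives $t\partial_t=\sum_i\alpha_i\, s_i\partial_{s_i}$ with $s_i=\lambda t^{\alpha_i}$ (or $t^{\alpha_3}$). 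This produces
\[
t\partial_t\Phi=c\sum_{i=1}^3\alpha_i\,\theta^{(i)}(p,t)+\sum_i \partial_{p_i}(\cdots)+E_r ,
\]
where $\theta^{(i)}$ are the integrands of $\Theta^{(i)}$. The term $E_r$ comes from the drift $rt^{\alpha_3}$ in the third center. It is a first-order $\partial_{p_3}$ derivative and is therefore also a divergence in $p$. After normalizing the constants (the factor $c$ from the Gaussian convention), integrating over $\Omega_{\mathcal T}$ with measure $dp\,dt/t$ gives $\sum\alpha_i\Theta^{(i)}$ equal to the integral of a space–scale divergence. This integral is the remainder $\mathcal R_{\mathcal T,\lambda,r}$.

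The second step applies the divergence theorem on $\Omega_{\mathcal T}$. Since $\mathcal T$ is convex, $\Omega_{\mathcal T}$ is a region whose boundary consists of three parts:
\begin{itemize}
\item the top face $Q_{\mathcal T}\times\{\ell(Q_{\mathcal T})\}$;
\item bottom faces $Q\times\{\ell(Q)\}$ for $Q\in\mathcal L(\mathcal T)$, which partition $Q_{\mathcal T}$;
\item vertical faces, namely the lateral boundary of $Q_{\mathcal T}$ together with the faces between boxes at each level.
\end{itemize}
Convexity ensures that each time slice is a union of boxes. On each time slice $t$, the $p$-divergence integrates to boundary integrals over faces of the dyadic boxes of size comparable to $t^{\alpha}$ in the slice. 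The horizontal faces contribute $\Phi$ evaluated at points of $\overline{\Omega_{\mathcal T}}$, integrated over sets of total measure $\lesssim|Q_{\mathcal T}|$. The vertical faces at scale $t$ contribute face area times $t^{\alpha_i}\times$(integrand)$/t^{\alpha_i}$. Summing over the dyadic levels, the total face measure, weighted with $dt/t$, is $\lesssim|Q_{\mathcal T}|$ per level, but with a geometric series over levels. This needs care; I would use that the number of boundary faces at level $k$ of a convex tree is controlled by the boxes at that level, and that the family of level sets $\{Q\in\mathcal T:\ell(Q)=2^k\}$ has total measure at most $|Q_{\mathcal T}|$. Actually, the vertical face contributions cancel between adjacent boxes inside a slice, leaving only the outer boundary of the slice. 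Alternatively, one keeps the sum per box bounded by $|Q|$ and uses a packing argument.

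The third step bounds each boundary integrand pointwise. Each integrand is of the form $\int\prod_j F_j(\Pi_j x)\prod(\text{Gaussians or derivatives, with widths } \lambda t^{\alpha_i}\text{ and centers shifted by } rt^{\alpha_3})dx$, evaluated at $(p,t)\in\overline{\Omega_{\mathcal T}}$. Each such function is dominated by $C\lambda^{a}(1+|r|)^{b}\,\theta_{t^{\alpha_i},p_i}$, because $|\g_{\lambda s,q}|,|\h_{\lambda s,q}|\lesssim \lambda^{10}\theta_{s,q}$ and $|\g_{s,q+rs}|\lesssim(1+|r|)^{10}\theta_{s,q}$. With six factors and the prefactors arising from the derivatives, the powers of $\lambda$ and $1+|r|$ add up to a polynomial bound such as $\lambda^{37}(1+|r|)^{21}$. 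The integrand is thus bounded by this constant times $\mathfrak C_{\mathcal T}(F)$.

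I expect the main obstacle to be the geometric bookkeeping in the second step: showing rigorously that the total boundary measure of $\Omega_{\mathcal T}$, weighted by the dual scale factors, is $O(|Q_{\mathcal T}|)$ even though the tree may have many levels. I would first try to rewrite the $p$-divergence terms on each slice as boundary terms only on the outer boundary of $\bigcup\{Q:(Q,t)\in\Omega_{\mathcal T}\}$. I would then bound these terms by charging each exposed face to a leaf or to $Q_{\mathcal T}$, using the anisotropic dyadic structure.
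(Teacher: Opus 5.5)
Your proposal follows essentially the same route as the paper's sketch: the heat equation with integration by parts in $p_i$, the fundamental theorem of calculus in $t$ on each $Q\times[\ell(Q)/2,\ell(Q)]$ telescoping by convexity to the root and the leaves, cancellation of faces shared by same-scale boxes, and a convexity-based count of the surviving faces. Of your two options for that count, the charging one works (an exposed face of a scale-$2^k$ box of $\mathcal T$ lies either on $\partial Q_{\mathcal T}$ or on $\partial L$ for the leaf $L\in\mathcal L(\mathcal T)$ containing the adjacent non-tree box, so a geometric series over $2^k\le \ell(L)$ gives a total $\lesssim |Q_{\mathcal T}|+\sum_L|L|\lesssim|Q_{\mathcal T}|$), whereas the per-box packing fallback cannot work because $\sum_{Q\in\mathcal T}|Q|$ can be as large as the depth of $\mathcal T$ times $|Q_{\mathcal T}|$; also, to get exactly $\lambda^{37}$ rather than $\lambda^{41}$, use $|\g_{\lambda s,q}|,|\h_{\lambda s,q}|\lesssim\lambda^{9}\theta_{s,q}$ instead of $\lambda^{10}$.
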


The proof of this lemma closely follows \cite[Lemma 4.3]{CDR21}. The argument relies on the heat equation satisfied by the Gaussian kernels, which converts a derivative in the scale into a spatial derivative of $\h$.
Namely, differentiating the product $\g_{t^{\alpha_i},p_i}(x_i^0)\g_{t^{\alpha_i},p_i}(x_i^1)$ in $t$ and integrating by parts in $p_i$ expresses the integral of $-t\partial_t$ of this product over an interval in $p_i$ as a constant multiple of the integral of $\h_{t^{\alpha_i},p_i}(x_i^0)\h_{t^{\alpha_i},p_i}(x_i^1)$ over the same interval, plus boundary terms at its two endpoints. Applying the fundamental theorem of calculus in $t$ over $[\ell(Q)/2,\ell(Q)]$ for a single box $Q\in{{\mathcal T}}$, together with the product rule, one thus writes the left-hand side of \eqref{E:general-telescoping-identity} as the difference of the forms with the Gaussian kernels at the scales $\ell(Q)/2$ and $\ell(Q)$, plus boundary terms on the faces of $Q$.

Summing over $Q\in{{\mathcal T}}$, the convexity of ${{\mathcal T}}$ makes the Gaussian terms at consecutive scales telescope, leaving only the Gaussian forms at the root $Q_{{\mathcal T}}$ and at the leaves $\mathcal{L}({{\mathcal T}})$. Since $\mathcal{L}({{\mathcal T}})$ partitions $Q_{{\mathcal T}}$, these are bounded by $|Q_{{\mathcal T}}|\mathfrak{C}_{{\mathcal T}}(F)$. To control the boundary terms, after cancellation of the contributions on faces shared by two adjacent boxes of the same scale, only faces in the boundary of the union of the boxes of a fixed scale survive, and a counting argument based on the convexity of ${{\mathcal T}}$ bounds their total contribution by $|Q_{{\mathcal T}}|\mathfrak{C}_{{\mathcal T}}(F)$. We omit the details of this proof.

For a collection $\mathcal{Q}$ of dyadic boxes and an integer $n$ we denote
\begin{equation}\label{def:mq}
\mathcal{M}_{\mathcal{Q},n}(f)=
\sup_{Q\in\mathcal{Q}} \Big(\;|f|^{n} * \Theta_{\ell(Q)} (c(Q)) \Big )^{1/n} ,
\end{equation}
where $c(Q)$ denotes the center of $Q$.
Note that
\[ \mathcal{M}_{\mathcal{Q},n}(f) \asymp
\sup_{\substack{(x,t)\in \Omega_{\mathcal{Q}}}}\Big(\;|f|^{n} * \Theta_{t}(x) \Big )^{1/n}. \]

Combining Lemma \ref{lem:telescoping_main} with the Brascamp-Lieb estimates from Lemmas \ref{lem:BL4} and \ref{lem:BL2}, we immediately obtain the following corollary.
\begin{corollary}\label{lem:telescoping}
Let $\lambda>1$ and $r\in{{\mathbb R}}$. There exists a constant
$
C_{\lambda,r}=O(\lambda^{37}(1+|r|)^{21})
$
such that for every convex tree $\mathcal T$, there is a multilinear
form $\mathcal R_{\mathcal T,\lambda,r}$ satisfying \eqref{E:general-telescoping-identity}
and such that for an arbitrary tuple $F$,
\begin{equation}
|\mathcal R_{\mathcal T,\lambda,r}(F)|
\le C_{\lambda,r}|Q_{\mathcal T}|
\prod_{j\in J}\mathcal M_{\mathcal T,4}(F_j).
\label{E:general-telescoping-L4}
\end{equation}
If
$
F_j=1$ for $j\notin\{(0,0,0),(0,0,1)\}$,
then one also has
\begin{equation}
|\mathcal R_{\mathcal T,\lambda,r}(F)|
\le C_{\lambda,r}|Q_{\mathcal T}|\,
\mathcal M_{\mathcal T,2}(F_{(0,0,0)})
\mathcal M_{\mathcal T,2}(F_{(0,0,1)}).
\label{E:general-telescoping-L2}
\end{equation}
\end{corollary}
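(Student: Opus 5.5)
The corollary follows by taking the form $\mathcal R_{\mathcal T,\lambda,r}$ and the constant $C_{\lambda,r}$ from Lemma \ref{lem:telescoping_main}. The identity \eqref{E:general-telescoping-identity} then holds automatically. It only remains to bound the cube functional $\mathfrak C_{\mathcal T}(F)$ appearing in \eqref{E:general-telescoping-remainder} by the two products of local averages.

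For \eqref{E:general-telescoping-L4}, fix $(p,t)\in\overline{\Omega_{\mathcal T}}$ and apply Lemma \ref{lem:BL4} to the non-negative functions $|F_j|$. Strictly, Lemma \ref{lem:BL4} is stated for Schwartz functions, so we either approximate the bounded $|F_j|$ monotonically or note that its proof, which is only Cauchy--Schwarz and Fubini, works verbatim for bounded measurable functions. This gives
\[\int_{{{\mathbb R}}^6}\Big(\prod_{j\in J}|F_j(\Pi_j(x))|\Big)\prod_{i=1}^3\prod_{b=0}^1\theta_{t^{\alpha_i},p_i}(x_i^b)\,dx\le\prod_{j\in J}\big(|F_j|^4*\Theta_t(p)\big)^{1/4}.\]
Taking the supremum over $(p,t)\in\overline{\Omega_{\mathcal T}}$ and using the comparability recorded after \eqref{def:mq}, namely $\sup_{(p,t)\in\Omega_{\mathcal T}}(|f|^n*\Theta_t(p))^{1/n}\asymp\mathcal M_{\mathcal T,n}(f)$, we obtain $\mathfrak C_{\mathcal T}(F)\lesssim\prod_{j\in J}\mathcal M_{\mathcal T,4}(F_j)$. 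The comparability extends to the closure by continuity of $(p,t)\mapsto |f|^n*\Theta_t(p)$. The implicit constant depends only on $\alpha$, so it can be absorbed into $C_{\lambda,r}$.

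For \eqref{E:general-telescoping-L2}, assume $F_j\equiv1$ for $j\notin\{(0,0,0),(0,0,1)\}$. The integrand in $\mathfrak C_{\mathcal T}(F)$ is then $|F_{(0,0,0)}(x_1^0,x_2^0,x_3^0)|\,|F_{(0,0,1)}(x_1^0,x_2^0,x_3^1)|$ times the $\theta$ weights. The variables $x_1^1$ and $x_2^1$ appear only in the weights $\theta_{t^{\alpha_1},p_1}(x_1^1)$ and $\theta_{t^{\alpha_2},p_2}(x_2^1)$, whose integrals are at most one. Integrating these out leaves exactly the left-hand side of Lemma \ref{lem:BL2} with $F_0=|F_{(0,0,0)}|$ and $F_1=|F_{(0,0,1)}|$. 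Lemma \ref{lem:BL2} bounds this by $(|F_0|^2*\Theta_t(p))^{1/2}(|F_1|^2*\Theta_t(p))^{1/2}$. Taking the supremum over $(p,t)$ as before gives \eqref{E:general-telescoping-L2}.

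There is no real obstacle here. The only points needing care are the passage from Schwartz to bounded functions in the two Brascamp--Lieb lemmas, and the comparability between the supremum over $\overline{\Omega_{\mathcal T}}$ and the supremum over box centers $c(Q)$ in $\mathcal M_{\mathcal T,n}$. The latter holds because $\Theta_t$ changes by at most a bounded factor when $p$ moves within $Q$ and $t\in[\ell(Q)/2,\ell(Q)]$.
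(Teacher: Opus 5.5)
Your proposal is correct and matches the paper's argument, which it states as immediate: take $\mathcal R_{\mathcal T,\lambda,r}$ from Lemma~\ref{lem:telescoping_main}, bound $\mathfrak C_{\mathcal T}(F)$ pointwise in $(p,t)$ by Lemma~\ref{lem:BL4}, or in the edge case by Lemma~\ref{lem:BL2} after integrating out $x_1^1,x_2^1$, and then pass to $\mathcal M_{\mathcal T,n}$ via the comparability noted after \eqref{def:mq}. Your remarks on extending the Brascamp--Lieb lemmas to bounded measurable $|F_j|$ and on handling the closure $\overline{\Omega_{\mathcal T}}$ correctly fill in routine details the paper leaves implicit.
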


For $i\in\{1,2,3\}$ and $b\in\{0,1\}$, define the
operator $\mathsf C_i^b$ on tuples
$F=(F_j)_{j\in J}$ by
\[
(\mathsf C_i^bF)_{(j_1,j_2,j_3)}
=
F_{(j_1,\ldots,j_{i-1},b,j_{i+1},\ldots,j_3)}.
\]

\begin{lemma}[Cauchy--Schwarz]
\label{lem:cs}
Let $F=(F_j)_{j\in J}$ be a tuple of real-valued functions. Then,
for every $i\in\{1,2,3\}$, every convex tree $\mathcal T$,
$\lambda>1$, and $r\in{{\mathbb R}}$,
\begin{equation}
\big|
\Theta^{(i)}_{\mathcal T,\lambda,r}(F)
\big|^2
\leq
\Theta^{(i)}_{\mathcal T,\lambda,r}(\mathsf C_i^0F)
\,
\Theta^{(i)}_{\mathcal T,\lambda,r}(\mathsf C_i^1F).
\label{E:cube-CS}
\end{equation}
In particular, $
\Theta^{(i)}_{\mathcal T,\lambda,r}(\mathsf C_i^bF)\ge0$ for
$b\in\{0,1\}$.
\end{lemma}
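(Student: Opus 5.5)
The form $\Theta^{(i)}_{\mathcal T,\lambda,r}$ factors in the $i$-th coordinate, so the plan is to prove the inequality by writing it as an integral of a perfect product and applying Cauchy--Schwarz. Fix $i$ and split $x=(x_i^0,x_i^1,\tilde x)$, where $\tilde x\in{{\mathbb R}}^4$ collects the other four coordinates. Write $k_i$ for the one-dimensional kernel in the $i$-th coordinate: $k_i=\h_{\lambda t^{\alpha_1},p_1}$ for $i=1$, $\h_{\lambda t^{\alpha_2},p_2}$ for $i=2$, and $\h_{t^{\alpha_3},p_3+rt^{\alpha_3}}$ for $i=3$. The kernel of $\Theta^{(i)}$ is then $k_i(x_i^0)k_i(x_i^1)\,w(\tilde x;p,t)$, where $w$ is a product of Gaussians in the remaining four variables and is nonnegative. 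Every $\Pi_j(x)$ contains exactly one of $x_i^0,x_i^1$, determined by $j_i$. So
\[
\prod_{j\in J}F_j(\Pi_j(x))=\Phi_0(x_i^0,\tilde x)\,\Phi_1(x_i^1,\tilde x),
\qquad
\Phi_b=\prod_{j:\,j_i=b}F_j(\Pi_j(\cdot)).
\]

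Define
\[
G_b(\tilde x,p,t)=\int_{{\mathbb R}}\Phi_b(y,\tilde x)\,k_i(y)\,dy .
\]
With this, the form becomes
\[
\Theta^{(i)}(F)=\int_{\Omega_{\mathcal T}}\int_{{{\mathbb R}}^4}G_0\,G_1\,w\,d\tilde x\,dp\,\frac{dt}{t}.
\]

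Next I identify the two symmetrized forms. Under $\mathsf C_i^b$, the entry $F_j$ with $j_i=0$ becomes $F_{j'}$ with $j'_i=b$, and the same holds for $j_i=1$. Hence both factors $\Phi_0$ and $\Phi_1$ are replaced by copies of $\Phi_b$, in the variables $x_i^0$ and $x_i^1$ respectively. This uses that $F_j(\Pi_j(x))$ depends on $j_i$ only through which copy $x_i^{j_i}$ is used, while the other coordinates of $\Pi_j(x)$ depend only on $(j_l)_{l\ne i}$. Since the functions are real-valued, this gives
\[
\Theta^{(i)}(\mathsf C_i^bF)=\int_{\Omega_{\mathcal T}}\int_{{{\mathbb R}}^4} G_b^2\, w\,d\tilde x\,dp\,\frac{dt}{t}\ \ge 0 .
\]
This already proves the positivity claim, because $w\ge0$ and the measure $dp\,dt/t$ on $\Omega_{\mathcal T}$ is positive.

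Finally, apply Cauchy--Schwarz in $L^2(w\,d\tilde x\,dp\,dt/t)$ on $\Omega_{\mathcal T}\times{{\mathbb R}}^4$ to $G_0G_1$. This gives $|\Theta^{(i)}(F)|^2\le\Theta^{(i)}(\mathsf C_i^0F)\,\Theta^{(i)}(\mathsf C_i^1F)$. The integrals converge because the $F_j$ are bounded and $\mathcal T$ is finite, so $\Omega_{\mathcal T}$ is bounded and bounded away from $t=0$. That justifies the use of Fubini's theorem.

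The only step needing care is the index bookkeeping: checking that $\mathsf C_i^b$ really turns both halves into the same function $\Phi_b$. This works because $i$ is the only coordinate whose kernel factor has a sign ($\h$ rather than $\g$), and the two copies $x_i^0,x_i^1$ carry identical kernels $k_i$, so the form is a genuine inner product structure in that coordinate.
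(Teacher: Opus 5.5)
Your proposal is correct and takes the same approach as the paper, which only says the proof is ``by Fubini and a single application of the Cauchy--Schwarz inequality'' and omits the details. Your factorization $\prod_{j}F_j(\Pi_j(x))=\Phi_0(x_i^0,\tilde x)\,\Phi_1(x_i^1,\tilde x)$, the identity $\Theta^{(i)}(\mathsf C_i^bF)=\int G_b^2\,w$, and Cauchy--Schwarz against the nonnegative weight $w$ (with integrability from boundedness of the $F_j$ and finiteness of $\mathcal T$) supply exactly those omitted details.
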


The proof is by Fubini and a single application of the Cauchy-Schwarz inequality, as in \cite{CDR21}. We omit the details.

Now we are ready to prove the main tree estimate.
\begin{prop}[Full cube tree estimate] \label{prop:fullcubetree}
Let $\lambda>1$, $r\in {{\mathbb R}}$, and $i\in \{1,2,3\}$. There exists a constant $C_{\lambda,r} =O(\lambda^{37}(1+|r|)^{21})$ such that
for any convex tree $\mathcal{T}$ and any tuple $F=(F_j)_{j\in J}$ of bounded real-valued measurable functions,
\[|\Theta^{(i)}_{{{\mathcal T}},\lambda,r}(F)| \le C_{\lambda,r} |Q_{{\mathcal T}}| \prod_{j\in J}\mathcal{M}_{{{\mathcal T}},4}(F_j).\]
\end{prop}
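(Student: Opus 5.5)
We will follow the scheme of \cite{Kov12, CDR21, DR21}: repeated applications of Lemma \ref{lem:cs} to reach symmetric tuples, the telescoping identity of Corollary \ref{lem:telescoping} to trade one coordinate's cancellative kernel for the others, and an absorption argument.

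\emph{Normalization.} Both sides are homogeneous of degree one in each $F_j$, so we may assume $\mathcal{M}_{\mathcal{T},4}(F_j)=1$ for all $j\in J$; if some $F_j$ has $\mathcal{M}_{\mathcal{T},4}(F_j)=0$, both sides vanish. The goal is then the uniform bound $|\Theta^{(i)}_{\mathcal{T},\lambda,r}(F)|\le C_{\lambda,r}|Q_{\mathcal{T}}|$. Let $\mathcal{B}$ be the supremum of $|\Theta^{(i)}_{\mathcal{T},\lambda,r}(F)|/|Q_{\mathcal{T}}|$ over $i\in\{1,2,3\}$ and over all tuples $F$ of bounded functions with $\mathcal{M}_{\mathcal{T},4}(F_j)\le 1$, with $\mathcal{T}$ fixed. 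The crude bound $|\Theta^{(i)}|\lesssim \prod_j\|F_j\|_\infty |Q_{\mathcal{T}}|\cdot\#\{\text{scales}\}$ makes $\mathcal{B}$ finite, because a tree is finite and all $F_j$ are bounded. It therefore suffices to prove $\mathcal{B}\le C_{\lambda,r}$.

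\emph{Reduction to symmetric tuples.} Apply Lemma \ref{lem:cs} three times, once in each coordinate $i'\in\{1,2,3\}$ (for the $\Theta^{(i)}$ form the lemma applies in every coordinate, since each coordinate carries a product kernel of the form $k(x^0)k(x^1)$). This gives
\[
|\Theta^{(i)}(F)|\le \prod_{b\in J}\Theta^{(i)}(\mathsf{C}_1^{b_1}\mathsf{C}_2^{b_2}\mathsf{C}_3^{b_3}F)^{1/8}.
\]
Each tuple on the right is fully symmetric: every entry is the single function $F_b$, so the tuple has the form $(G,\dots,G)$ with $\mathcal{M}_{\mathcal{T},4}(G)\le1$. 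It therefore suffices to bound $\Theta^{(i)}(G,\dots,G)$ for such constant tuples. By the last assertion of Lemma \ref{lem:cs}, all three values $\Theta^{(1)},\Theta^{(2)},\Theta^{(3)}$ are nonnegative on these tuples.

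\emph{Telescoping on symmetric tuples.} For a symmetric tuple, Corollary \ref{lem:telescoping} gives
\[
\sum_{i'=1}^3\alpha_{i'}\Theta^{(i')}(G,\dots,G)=\mathcal{R}(G,\dots,G),
\]
with $|\mathcal{R}|\le C_{\lambda,r}|Q_{\mathcal{T}}|$ by \eqref{E:general-telescoping-L4}. Since every $\alpha_{i'}\ge 1$ and every term on the left is nonnegative, each $\Theta^{(i')}(G,\dots,G)\le C_{\lambda,r}|Q_{\mathcal{T}}|$. This is the key step: positivity, coming from the squares produced by Cauchy--Schwarz, lets the identity be used as an inequality. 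Substituting into the product of eighth roots yields $\mathcal{B}\le C_{\lambda,r}$, and the constant keeps the stated polynomial dependence on $\lambda$ and $r$ because it comes straight from Corollary \ref{lem:telescoping}.

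\emph{Main obstacle.} The delicate point is justifying that Lemma \ref{lem:cs} applies in every coordinate $i'$ of $\Theta^{(i)}$, not only in coordinate $i$, so that full symmetrization is reached. The lemma as stated concerns $\mathsf{C}_i^b$ for the form $\Theta^{(i)}$ with the same index $i$. For coordinates where the kernel is the Gaussian $\g\otimes\g$ rather than $\h\otimes\h$, the same Fubini plus Cauchy--Schwarz argument works. In that coordinate the form is
\[
\int \Big(\int H(x^0)\,k(x^0)\,dx^0\Big)\Big(\int H'(x^1)\,k(x^1)\,dx^1\Big)\,d(\text{rest}),
\]
so Cauchy--Schwarz in the remaining variables $(p,t,\dots)$ applies. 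The danger is the mixing with the shifted center $p_3+rt^{\alpha_3}$, but that shift is common to $x_3^0$ and $x_3^1$, so the argument still works. If full symmetrization nonetheless fails, the fallback is an absorption argument: symmetrize only in coordinate $i$ and use the telescoping identity to express $\Theta^{(i)}$ as $\mathcal{R}$ minus the other two forms, each bounded by $\mathcal{B}^{1/2}$-type quantities via Cauchy--Schwarz. This gives $\mathcal{B}\le C+ c\,\mathcal{B}^{1-\epsilon}$, hence $\mathcal{B}\le C'$.
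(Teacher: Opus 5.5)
Your reduction to fully symmetric tuples fails. Lemma \ref{lem:cs} for $\Theta^{(i)}$ holds only in coordinate $i$, and the inequality $|\Theta^{(i)}(F)|\le\prod_{b\in J}\Theta^{(i)}(\mathsf C_1^{b_1}\mathsf C_2^{b_2}\mathsf C_3^{b_3}F)^{1/8}$ is false. The obstruction is not the shifted center $p_3+rt^{\alpha_3}$. It is the sign of the kernel in the coordinate you do \emph{not} double. To apply Cauchy--Schwarz in coordinate $i'$, you integrate out $x_{i'}^0,x_{i'}^1$, and you then need a nonnegative measure in all remaining variables. For $i'\ne i$ these variables include $x_i^0,x_i^1$. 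Both halves of the split depend on them, since $j_i$ ranges over $\{0,1\}$ in each half, and they carry the signed weight $\h_{\cdot,p_i}(x_i^0)\h_{\cdot,p_i}(x_i^1)$. For $i'=i$ the remaining weights are $\g\otimes\g\ge0$, which is exactly why the lemma is stated only in that case.

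Here is a concrete counterexample. Take $i=1$, $i'=2$, $F_j\equiv1$ for $j_2=0$, and $F_j(x)=u(x_1)$ for $j_2=1$ with $u=\mathbf 1_{[0,1]}$. Then $\mathsf C_2^0F\equiv1$, so $\Theta^{(1)}(\mathsf C_2^0F)=0$ because $\int\h=0$. On the other hand, $\Theta^{(1)}(F)=\int_{\Omega_{\mathcal T}}\big(\g(\tfrac{1-p_1}{s})-\g(\tfrac{p_1}{s})\big)^2\,dp\,\tfrac{dt}{t}>0$ with $s=\lambda t^{\alpha_1}$. Your eighth-root product vanishes as well.

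Your fallback does not repair this. Suppose you symmetrize only in coordinate $i$, telescope, and apply Cauchy--Schwarz to $\Theta^{(q)}(\mathsf C_i^aF)$ in coordinate $q$. The resulting factors are controlled only by $\mathcal B$ itself, so you get $\mathcal B\le C+c\,\mathcal B$ with $c=\max_i\sum_{q\ne i}\alpha_q/\alpha_i\ge2$. Nothing of the form $\mathcal B^{1-\epsilon}$ appears, and no absorption is possible.

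The missing idea is to interleave Cauchy--Schwarz with telescoping. Each Cauchy--Schwarz must be applied to the form whose $\h$-kernel sits in the coordinate being doubled, and positivity is used to discard terms:
\begin{itemize}
\item Symmetrize $\Theta^{(i)}$ in coordinate $i$.
\item Telescope to reduce to $\Theta^{(q)}(F^a)$ with $q\ne i$, and symmetrize that in coordinate $q$.
\item The tuple $F^{a,b}=\mathsf C_q^b\mathsf C_i^aF$ lies in the range of both $\mathsf C_i^a$ and $\mathsf C_q^b$, so $\Theta^{(i)}(F^{a,b})$ and $\Theta^{(q)}(F^{a,b})$ are nonnegative. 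Telescoping then gives $\alpha_q\Theta^{(q)}(F^{a,b})\le C+\alpha_p|\Theta^{(p)}(F^{a,b})|$.
\item One more Cauchy--Schwarz in coordinate $p$ reaches the constant tuples $F^{a,b,c}$.
\end{itemize}
Your final step, telescoping on a constant tuple where all three forms are nonnegative, is correct. But the constant tuples can only be reached through this chain.
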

\begin{proof}
If $\mathcal M_{\mathcal T,4}(F_j)=0$ for some $j$, then
$F_j=0$ almost everywhere and the conclusion is immediate.
Thus we may assume that all these quantities are positive.

By anisotropic scaling and multilinearity, we may assume
\[
|Q_{\mathcal T}|=1, \quad \mathcal M_{\mathcal T,4}(F_j)=1
\]
for $j\in J$.
We write $C$ for any
quantity bounded by
$O(\lambda^{37}(1+|r|)^{21})$, enlarging it so that $C\ge1$. Throughout the proof we suppress the subscripts $\mathcal T,\lambda,r$.

Fix $i\in\{1,2,3\}$. Lemma \ref{lem:cs} gives
\begin{equation}
|\Theta^{(i)}(F)|^2
\leq
\Theta^{(i)}(\mathsf C_i^0F)
\Theta^{(i)}(\mathsf C_i^1F).
\label{E:full-cube-first-CS}
\end{equation}
It is therefore enough to show that for $a\in\{0,1\}$ and $F^a = \mathsf C_i^aF$,
\[
\Theta^{(i)}(F^a) \le C .
\]
The telescoping identity from Corollary \ref{lem:telescoping} gives
\begin{equation}
\alpha_i\Theta^{(i)}(F^a)
=
\mathcal R(F^a)
-
\sum_{\substack{1\le q\le3\\q\neq i}}
\alpha_q\Theta^{(q)}(F^a) \leq
C+
\sum_{\substack{1\le q\le3\\q\neq i}}
\alpha_q|\Theta^{(q)}(F^a)|.
\label{E:full-cube-first-reduction}
\end{equation}
where we used $|\mathcal{R}(F^a)|\le C$.
We next estimate the two forms on the right-hand side.

Fix $q\neq i$, and let $p$ be the unique element of
$\{1,2,3\}\setminus\{i,q\}$.
By Lemma \ref{lem:cs},
\begin{equation}
|\Theta^{(q)}(F^a)|^2
\leq
\Theta^{(q)}(\mathsf C_q^0F^a)
\Theta^{(q)}(\mathsf C_q^1F^a).
\label{E:full-cube-second-CS}
\end{equation}
For $b\in\{0,1\}$, set
\[
F^{a,b}
=
\mathsf C_q^bF^a
=
\mathsf C_q^b\mathsf C_i^aF.
\]
Since $\mathsf C_i^a$ and $\mathsf C_q^b$ act in distinct coordinates and
commute, $F^{a,b}$ belongs to the range of both
$\mathsf C_i^a$ and $\mathsf C_q^b$. The nonnegativity conclusion
of Lemma~\ref{lem:cs} therefore gives
\begin{equation}
\Theta^{(i)}(F^{a,b})\geq 0,
\qquad
\Theta^{(q)}(F^{a,b})\geq 0.
\label{E:full-cube-two-coordinate-positivity}
\end{equation}
Using Corollary \ref{lem:telescoping} again and discarding the non-negative
$i$th-coordinate term,
\begin{equation}
\alpha_q\Theta^{(q)}(F^{a,b})
\leq
|\mathcal R(F^{a,b})|
+
\alpha_p|\Theta^{(p)}(F^{a,b})| \le
C+\alpha_p|\Theta^{(p)}(F^{a,b})|.
\label{E:full-cube-second-reduction}
\end{equation}

We estimate the remaining form by one final application of
Cauchy--Schwarz as
\begin{equation}
|\Theta^{(p)}(F^{a,b})|^2
\leq
\Theta^{(p)}(\mathsf C_p^0F^{a,b})
\Theta^{(p)}(\mathsf C_p^1F^{a,b}).
\label{E:full-cube-third-CS}
\end{equation}
Now let $
F^{a,b,c}
=
\mathsf C_p^cF^{a,b}$ and by Lemma \ref{lem:cs} we have
\[
\Theta^{(\ell)}(F^{a,b,c})\ge0
\]
for $\ell=1,2,3$.
Corollary \ref{lem:telescoping} gives
\[
\alpha_p\Theta^{(p)}(F^{a,b,c})
\leq
\sum_{\ell=1}^3
\alpha_\ell\Theta^{(\ell)}(F^{a,b,c})
=
\mathcal R(F^{a,b,c}) \le C.
\]
This gives the desired estimate, after tracing the bounds back through the three applications of the Cauchy–Schwarz inequality.
\end{proof}

\begin{prop}[Edge tree estimate]
\label{prop:f3bd}Let $\lambda>1$, $r\in {{\mathbb R}}$. Assume that $(F_j)_{j\in J}$ is a tuple satisfying $F_{(0,0,0)}=F_{(0,0,1)}$ and $F_j=1$ for all other $j\in J$.
Then
\begin{equation}
\label{formszero}\Theta^{(1)}_{{{\mathcal T}},\lambda,r}(F)=\Theta^{(2)}_{{{\mathcal T}},\lambda,r}(F) =0,
\end{equation}
and
there exists a constant $C_{\lambda,r} =O(\lambda^{37}(1+|r|)^{21})$ such that
for any convex tree $\mathcal{T}$,
\begin{equation}
|\Theta^{(3)}_{{{\mathcal T}},\lambda,r}(F)|\le C_{\lambda,r} |Q_{{\mathcal T}}| (\mathcal{M}_{{{\mathcal T}},2}(F_{(0,0,0)}))^2
\end{equation}
\end{prop}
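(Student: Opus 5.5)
We first establish the vanishing statement \eqref{formszero} and then obtain the bound for $\Theta^{(3)}$ from the telescoping identity.

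\emph{Vanishing of $\Theta^{(1)}$ and $\Theta^{(2)}$.} Under the hypothesis, only $F_{(0,0,0)}$ and $F_{(0,0,1)}$ are nontrivial. Both are evaluated at points whose first two coordinates are $x_1^0,x_2^0$; they differ only in the third coordinate, namely $x_3^0$ versus $x_3^1$. Hence the variable $x_1^1$ enters $\Theta^{(1)}$ only through the kernel factor $\h_{\lambda t^{\alpha_1},p_1}(x_1^1)$. By Fubini we may integrate this variable out first. Since $\h=\g'$, we have $\int_{{\mathbb R}} \h_{s,q}(y)\,dy=0$ for every $s>0$ and $q\in{\mathbb R}$. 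Therefore the inner integral vanishes for every $(p,t)$, and $\Theta^{(1)}(F)=0$. The same argument applied to $x_2^1$ and the factor $\h_{\lambda t^{\alpha_2},p_2}(x_2^1)$ gives $\Theta^{(2)}(F)=0$. Absolute integrability, which justifies Fubini, follows from the boundedness of the $F_j$, the Gaussian decay of the kernels, and the fact that $\Omega_{\mathcal T}$ is bounded with $t$ bounded away from $0$ because $\mathcal T$ is finite.

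\emph{Bound for $\Theta^{(3)}$.} Apply the telescoping identity of Corollary \ref{lem:telescoping} to this tuple $F$. By the vanishing just proved, it reduces to
\[
\alpha_3\Theta^{(3)}_{\mathcal T,\lambda,r}(F)=\mathcal R_{\mathcal T,\lambda,r}(F).
\]
The tuple satisfies $F_j=1$ for $j\notin\{(0,0,0),(0,0,1)\}$, so the improved remainder bound \eqref{E:general-telescoping-L2} applies:
\[
|\mathcal R_{\mathcal T,\lambda,r}(F)|\le C_{\lambda,r}|Q_{\mathcal T}|\,\mathcal M_{\mathcal T,2}(F_{(0,0,0)})\,\mathcal M_{\mathcal T,2}(F_{(0,0,1)}).
\]
Since $F_{(0,0,0)}=F_{(0,0,1)}$, the right-hand side is $C_{\lambda,r}|Q_{\mathcal T}|\,\mathcal M_{\mathcal T,2}(F_{(0,0,0)})^2$. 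Dividing by $\alpha_3\ge 1$ gives the claim, with the constant still of order $O(\lambda^{37}(1+|r|)^{21})$.

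\emph{Main point of care.} The only delicate step is making sure that \eqref{E:general-telescoping-L2} is available for this tuple. That inequality comes from combining Lemma \ref{lem:telescoping_main} with the edge Brascamp--Lieb bound, Lemma \ref{lem:BL2}, applied to the cube functional $\mathfrak C_{\mathcal T}(F)$. Taking $F_j=1$ for the six trivial vertices in that functional produces exactly the configuration of Lemma \ref{lem:BL2}: the variables $x_1^1,x_2^1$ integrate out against $\theta$, whose integral is at most one. This configuration is precisely the hypothesis of Corollary \ref{lem:telescoping}, so no further work is needed.
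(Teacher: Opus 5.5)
Your proposal is correct and follows the paper's proof essentially step for step. You show $\Theta^{(1)}=\Theta^{(2)}=0$ by integrating out $x_1^1$ and $x_2^1$ against the mean-zero factor $\h$, then reduce the telescoping identity to $\alpha_3\Theta^{(3)}=\mathcal R$, and conclude with the improved remainder bound \eqref{E:general-telescoping-L2}.
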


\begin{proof} Let
$ f=F_{(0,0,0)}=F_{(0,0,1)}$.
Since the product of the input functions is independent of
$x_1^1$ and $x_2^1$, and since $\int_{{{\mathbb R}}}\h=0$, integration in
$x_1^1$ and $x_2^1$ gives \[\Theta^{(1)}_{{{\mathcal T}},\lambda,r}(F)=\Theta^{(2)}_{{{\mathcal T}},\lambda,r}(F) =0.\]
The telescoping identity
\eqref{E:general-telescoping-identity} therefore reduces to
\[
\alpha_3\Theta^{(3)}_{\mathcal T,\lambda,r}(F)
=\mathcal R_{\mathcal T,\lambda,r}(F).
\]
By the assumption on the tuple, the improved
remainder estimate \eqref{E:general-telescoping-L2} gives
\[
|\mathcal R_{\mathcal T,\lambda,r}(F)|
\le
C_{\lambda,r}|Q_{\mathcal T}|
\big(\mathcal M_{\mathcal T,2}(f)\big)^2.
\]
Dividing by $\alpha_3$ proves the desired estimate, after absorbing
$\alpha_3^{-1}$ into $C_{\lambda,r}$.
\end{proof}

\subsection{\texorpdfstring{From $\Theta_{{{\mathcal T}},\lambda,r}$ to $\Lambda_{{{\mathcal T}},u}$}{Towards the localized forms}}
In this section we define $\Lambda_{{{\mathcal T}},u}$ which is a localized variant of $\Lambda_u$, and deduce bounds for $\Lambda_{{{\mathcal T}},u}$ from Propositions \ref{prop:fullcubetree} and \ref{prop:f3bd}.

For a finite collection of anisotropic dyadic boxes $\mathcal{Q}$ and a tuple of real-valued Schwartz functions $f=(f_0,f_1,f_2,f_3)$ we define
\[\Lambda_{\mathcal{Q},u}(f) = \int_{\Omega_{\mathcal{Q}}} c_3(t,u) \int_{{{\mathbb R}}^6}
f_0(x_1,x_2,x_3)
f_1(y_1,x_2,x_3)f_2(x_1,y_2,x_3)f_3(x_1,x_2,y_3)\]
\[\times ({\varphi}_{1,u_1})_{t^{\alpha_1},p_1}(x_1)\g_{t^{\alpha_1},p_1}(y_1)
({\varphi}_{1,u_2})_{t^{\alpha_2},p_2}(x_2)\g_{t^{\alpha_2},p_2}(y_2)
\h_{t^{\alpha_3},p_3}(x_3)(\h*{\psi}_{1,u_3})_{t^{\alpha_3},p_3}(y_3) \, dx \, dy\, dp\, \frac{dt}{t},
\]
where ${\varphi}_{1,u_1},\,{\varphi}_{1,u_2},\,{\psi}_{1,u_3}$ are as in the definition of $\Lambda_u$.

For the rest of this section we restrict our attention to the case when $\mathcal{Q}$ is a finite convex tree $\mathcal{T}$. We define two further local forms on real-valued Schwartz functions $f_0,f_1,f_2$ and $f_3$, respectively, by
\[\Lambda_{\mathcal{T},u,1}(f_0,f_1,f_2) = \int_{\Omega_{\mathcal{T}}} \int_{{{\mathbb R}}^5}
\Big( \int_{{{\mathbb R}}}f_0(x_1,x_2,x_3)
f_1(y_1,x_2,x_3)f_2(x_1,y_2,x_3) \h_{t^{\alpha_3},p_3}(x_3) dx_3\Big)^2 d\mu \, dp\, \frac{dt}{t}\]
and
\[\Lambda_{\mathcal{T},u,2}(f_3)= \int_{\Omega_{\mathcal{T}}} \int_{{{\mathbb R}}^5}
\Big( \int_{{{\mathbb R}}}f_3(x_1,x_2,y_3) \h_{t^{\alpha_3}, p_3+r t^{\alpha_3}}(y_3) dy_3\Big)^2 d\mu \, dp\, \frac{dt}{t} ,\]
where
\[d\mu = |({\varphi}_{1,u_1})_{t^{\alpha_1},p_1}(x_1)|\g_{t^{\alpha_1},p_1}(y_1)|({\varphi}_{1,u_2})_{t^{\alpha_2},p_2}(x_2)|\g_{t^{\alpha_2},p_2}(y_2)|\psi_{1,u_3}(r)| dx_1dx_2 dy_1dy_2 dr .\]

\begin{lemma}
\label{lem:lambda12-cs}
For any convex tree $\mathcal{T}$ and $u\in {{\mathbb R}}^3$,
\begin{equation}
\label{E:first-local-CS}|\Lambda_{\mathcal{T},u}(f)|^2 \le \Lambda_{\mathcal{T},u,1}(f_0,f_1,f_2)\,\Lambda_{\mathcal{T},u,2}(f_3),
\end{equation}

\end{lemma}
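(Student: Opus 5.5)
The plan is to write $\Lambda_{\mathcal{T},u}(f)$ as an integral of a product of two inner integrals over a common measure, and then apply a single Cauchy--Schwarz inequality.

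First, expand the factor $(\h*\psi_{1,u_3})_{t^{\alpha_3},p_3}(y_3)$. By the rescaling rule $(\varphi_{1,r})_{t,q}=\varphi_{t,q+rt}$ we have
\[
(\h*\psi_{1,u_3})_{t^{\alpha_3},p_3}(y_3)=\int_{{\mathbb R}} \psi_{1,u_3}(r)\,\h_{t^{\alpha_3},p_3+rt^{\alpha_3}}(y_3)\,dr .
\]
Indeed, $\h*\psi_{1,u_3}=\int \psi_{1,u_3}(r)\,\h(\cdot-r)\,dr$ and $\h(\cdot-r)=\h_{1,r}$, and dilation commutes with this superposition. Substituting this and using Fubini (all functions are Schwartz, and $\Omega_{\mathcal T}$ is bounded with $t$ bounded away from $0$), we obtain
\[
\Lambda_{\mathcal{T},u}(f)=\int_{\Omega_{\mathcal T}} c_3(t,u)\int_{{\mathbb R}^5} A(x_1,x_2,y_1,y_2,p,t)\,B(x_1,x_2,r,p,t)\,\sigma\,dx_1dx_2dy_1dy_2dr\,dp\,\frac{dt}{t}.
\]
Here $A=\int f_0(x)f_1(y_1,x_2,x_3)f_2(x_1,y_2,x_3)\h_{t^{\alpha_3},p_3}(x_3)\,dx_3$ and $B=\int f_3(x_1,x_2,y_3)\h_{t^{\alpha_3},p_3+rt^{\alpha_3}}(y_3)\,dy_3$. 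The weight $\sigma$ is the signed product
\[
\sigma=(\varphi_{1,u_1})_{t^{\alpha_1},p_1}(x_1)\,\g_{t^{\alpha_1},p_1}(y_1)\,(\varphi_{1,u_2})_{t^{\alpha_2},p_2}(x_2)\,\g_{t^{\alpha_2},p_2}(y_2)\,\psi_{1,u_3}(r).
\]
The key point is that $f_0,f_1,f_2$ depend on $x_3$ but not on $y_3$, while $f_3$ depends on $y_3$ but not on $x_3$. Hence the six-dimensional inner integral factors into the $x_3$-integral $A$ times the $y_3$-integral $B$.

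Next, bound everything by absolute values. Use $|c_3|\le 1$ and $|\sigma|$, whose density is exactly $d\mu$ because the Gaussians $\g$ are positive. This gives
\[
|\Lambda_{\mathcal T,u}(f)|\le\int_{\Omega_{\mathcal T}}\int_{{\mathbb R}^5}|A|\,|B|\,d\mu\,dp\,\frac{dt}{t}.
\]
Then apply Cauchy--Schwarz with respect to the positive measure $d\mu\,dp\,dt/t$ on $\Omega_{\mathcal T}\times{\mathbb R}^5$. The resulting factors $\int A^2$ and $\int B^2$ are precisely $\Lambda_{\mathcal T,u,1}(f_0,f_1,f_2)$ and $\Lambda_{\mathcal T,u,2}(f_3)$, since $A$ does not depend on $r$ and $B$ does not depend on $y_1,y_2$. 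Squaring yields \eqref{E:first-local-CS}.

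The only delicate step is the first one: correctly identifying the translated dilate $\h_{t^{\alpha_3},p_3+rt^{\alpha_3}}$ that appears in $\Lambda_{\mathcal T,u,2}$. This relies on the stated identity for $(\varphi_{1,r})_{t,q}$ and on a change of variables, and it must be checked that the normalizations agree, i.e. that the $L^1$-normalized dilation commutes with convolution against the unscaled $\psi_{1,u_3}$. I expect the remaining steps, namely justifying Fubini and matching the measures, to be routine.
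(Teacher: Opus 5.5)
Your proposal is correct and follows essentially the same route as the paper. Both expand $(\h*\psi_{1,u_3})_{t^{\alpha_3},p_3}$ as the superposition $\int \h_{t^{\alpha_3},p_3+rt^{\alpha_3}}\,\psi_{1,u_3}(r)\,dr$, factor the inner integral into the $x_3$-integral and the $y_3$-integral against the signed weight, and conclude with the triangle inequality, $|c_3(t,u)|\le 1$, and a single Cauchy--Schwarz inequality with respect to $d\mu\,dp\,dt/t$.
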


\begin{proof}
Using
\[(\h*{\psi}_{1,u_3})_{t^{\alpha_3},p_3}(y_3) = \int_{{{\mathbb R}}} \h_{t^{\alpha_3}, p_3+r t^{\alpha_3}}(y_3)\psi_{1,u_3}(r)dr\]
we can write
\[\Lambda_{\mathcal{T},u}(f) = \int_{\Omega_{\mathcal{T}}} c_3(t,u) \int_{{{\mathbb R}}^5}
\Big( \int_{{{\mathbb R}}}f_0(x_1,x_2,x_3)
f_1(y_1,x_2,x_3)f_2(x_1,y_2,x_3)\h_{t^{\alpha_3},p_3}(x_3) dx_3 \Big) \]
\[
\times \Big( \int_{{{\mathbb R}}}f_3(x_1,x_2,y_3) \h_{t^{\alpha_3}, p_3+r t^{\alpha_3}}(y_3) dy_3 \Big)
({\varphi}_{1,u_1})_{t^{\alpha_1},p_1}(x_1)\g_{t^{\alpha_1},p_1}(y_1)\]
\[
\times ({\varphi}_{1,u_2})_{t^{\alpha_2},p_2}(x_2)\g_{t^{\alpha_2},p_2}(y_2)\psi_{1,u_3}(r)
\, dx_1dx_2 dy_1dy_2 dr dp \frac{dt}{t}
\]
The triangle inequality, $|c_3(t,u)|\le 1$, and the Cauchy-Schwarz inequality
give the desired estimate.
\end{proof}

Each of the forms on the right-hand side satisfies an estimate in terms of the corresponding maximal sizes.
\begin{lemma}
\label{lem:lambda1}
For any convex tree $\mathcal{T}$ and $u\in {{\mathbb R}}^3$,
\[ \Lambda_{\mathcal{T},u,1}(f_0,f_1,f_2) \leq C\langle u\rangle^{50} |Q_\mathcal{T}| \prod_{j=0}^2 \mathcal{M}_{\mathcal{T},4}(f_j)^2,
\]
with a constant $C$ independent of ${{\mathcal T}}$, $u$, and $f_0,f_1,f_2$.
\end{lemma}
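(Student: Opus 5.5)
We reduce Lemma \ref{lem:lambda1} to Proposition \ref{prop:fullcubetree} with $i=3$. The reduction has three steps: expand the square, replace the non-Gaussian weights by Gaussians, and pass from $\mathcal{M}$ of the cube tuple to $\mathcal{M}$ of the $f_j$.

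\emph{Step 1: expanding the square.} Write the square in the definition of $\Lambda_{\mathcal{T},u,1}$ as a double integral over $x_3^0,x_3^1$, and set $x_1=x_1^0$, $y_1=x_1^1$, $x_2=x_2^0$, $y_2=x_2^1$. Then
\[F_0(x_1^0,x_2^0,x_3^b)F_1(x_1^1,x_2^0,x_3^b)F_2(x_1^0,x_2^1,x_3^b)\]
is a product of entries of a cube tuple $F=(F_j)_{j\in J}$, evaluated on the vertices $(0,0,b),(1,0,b),(0,1,b)$. Concretely, take $F_{(0,0,b)}=f_0$, $F_{(1,0,b)}=f_1$, $F_{(0,1,b)}=f_2$ and $F_{(1,1,b)}=1$. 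The $x_3$-weights become $\h_{t^{\alpha_3},p_3}(x_3^0)\h_{t^{\alpha_3},p_3}(x_3^1)$, which is the $\Theta^{(3)}$ structure with $r=0$. The remaining weights $|(\varphi_{1,u_i})_{t^{\alpha_i},p_i}(x_i^0)|\,\g_{t^{\alpha_i},p_i}(x_i^1)$ are not yet of product Gaussian form $\g_{\lambda t^{\alpha_i},p_i}(x_i^0)\g_{\lambda t^{\alpha_i},p_i}(x_i^1)$, and the $dr$ integral is harmless since $\int|\psi_{1,u_3}|\lesssim1$.

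\emph{Step 2: domination by Gaussians (main obstacle).} We need a pointwise bound
\[|(\varphi_{1,u_i})_{s,p}(x)|\le C\langle u\rangle^{N}\,\sum_{\lambda}\ c_\lambda\,\g_{\lambda s, p}(x),\]
i.e. a superposition of dilated Gaussians centered at $p$ with rapidly decaying coefficients. A Schwartz function shifted by $u$ is bounded by $\langle u\rangle^{10}(1+|x|)^{-M}$ around $0$, and such a decaying profile is dominated by $\sum_{m\ge0}2^{-Km}\g_{2^m}$ with $\lambda=2^m$. For the other factor, $\g_{s,p}\le\lambda\,\g_{\lambda s,p}$. This writes $\Lambda_{\mathcal{T},u,1}$ as at most $C\langle u\rangle^{20}\sum_{m_1,m_2}2^{-K(m_1+m_2)}$ times positive forms in which the scales $\lambda_1=2^{m_1}$ and $\lambda_2=2^{m_2}$ may differ between coordinates $1$ and $2$. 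The difficulty is that Proposition \ref{prop:fullcubetree} uses one common $\lambda$ for both coordinates. To handle this, we either bound by $\lambda=\max(\lambda_1,\lambda_2)$ using $\g_{\lambda_1 s}\le(\lambda/\lambda_1)\g_{\lambda s}$, which costs only a polynomial factor absorbed by the $2^{-Km}$, or we observe that the proofs of Lemma \ref{lem:telescoping_main} and Proposition \ref{prop:fullcubetree} go through verbatim with separate parameters $\lambda_1,\lambda_2$. Positivity is essential here: the expanded square is a $\Theta^{(3)}$ form of a symmetric tuple ($F_{(\cdot,\cdot,0)}=F_{(\cdot,\cdot,1)}$), hence nonnegative by Lemma \ref{lem:cs}. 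This nonnegativity is what lets us replace the weights by the larger Gaussian weights inside the integral without losing sign control.

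\emph{Step 3: applying Proposition \ref{prop:fullcubetree} and summing.} Each resulting form is bounded by
\[C_{\lambda}\,|Q_{\mathcal T}|\prod_{j}\mathcal M_{\mathcal T,4}(F_j),\qquad C_\lambda=O(\lambda^{37}).\]
Since each of $f_0,f_1,f_2$ appears twice and the constant entries have $\mathcal M_{\mathcal T,4}(1)\lesssim1$, the product is $\prod_{j=0}^2\mathcal M_{\mathcal T,4}(f_j)^2$. The loss $\lambda^{37}$ is summable against $2^{-Km}$ once $K>37$, which leaves $C\langle u\rangle^{50}$ overall.
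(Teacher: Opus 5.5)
Your overall route is the paper's: dominate the weights in coordinates $1,2$ by Gaussian superpositions, and expand the square to obtain $\Theta^{(3)}_{\mathcal T,\lambda,0}$ of exactly the tuple you wrote down. Then apply Proposition~\ref{prop:fullcubetree} and sum the $\lambda^{37}$ loss against the superposition weights.

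The gap is in Step~2, where the $u$-dependence, which is part of the statement, is not controlled. The bound $|\varphi(x-u)|\lesssim\langle u\rangle^{10}(1+|x|)^{-M}$ is false for $M>10$ (take $x=u$). Peetre's inequality gives $\langle u\rangle^{M}(1+|x|)^{-M}$, and this is sharp. With that correction, your coordinate-by-coordinate scheme needs decay $M>41$ separately in coordinate $1$ and in coordinate $2$. Indeed, for $m_2=0$, $m_1\to\infty$, the weight $2^{-Km_1}$ with $K\le M-1$ must beat three factors: $2^{m_1}$ from $\g\le\lambda\g_\lambda$, $2^{2m_1}$ from lifting coordinate $2$ to scale $\lambda$, and $2^{37m_1}$. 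The product of the two one-variable bounds then costs $(1+|u_1|)^M(1+|u_2|)^M$, i.e.\ $\langle u\rangle^{2M}$ with $2M>82$. No product-form bound can do better: test $z_1=u_1$, $z_3=u_2$ with $|u_1|=|u_2|$. So the claimed $\langle u\rangle^{20}$ and ``$\langle u\rangle^{50}$ overall'' are not justified. Your alternative of rerunning Lemma~\ref{lem:telescoping_main} and Proposition~\ref{prop:fullcubetree} with separate $\lambda_1,\lambda_2$ would require tracking the constant's dependence on each parameter, which you do not do.

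The missing step is to bound the four-variable tensor jointly rather than coordinatewise. Since $1+|z|\le(1+|z_1-u_1|+|z_2|+|z_3-u_2|+|z_4|)(1+|u_1|+|u_2|)$, one has
\[
|\varphi_{1,u_1}|\otimes\g\otimes|\varphi_{1,u_2}|\otimes\g\,(z)\le C\langle u\rangle^{50}(1+|z|)^{-50}\le C'\langle u\rangle^{50}\int_1^\infty\g_\lambda^{\otimes 4}(z)\,\lambda^{-47}\,d\lambda .
\]
This uses one common $\lambda$ for all four variables, so no mismatch arises. Then $\int_1^\infty\lambda^{37-47}\,d\lambda<\infty$ yields exactly $\langle u\rangle^{50}$. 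Downstream the precise power is harmless: a power such as $\langle u\rangle^{84}$ would still give at most $\langle u\rangle^{100}$ in Proposition~\ref{prop:twisted}. But $\langle u\rangle^{50}$ is what the lemma asserts.

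A smaller point: what licenses enlarging the weights is not the global nonnegativity from Lemma~\ref{lem:cs}, which would not suffice. It is the pointwise fact that, for fixed $(x_1,y_1,x_2,y_2,p,t)$, the integrand is the square $\big(\int f_0f_1f_2\,\h_{t^{\alpha_3},p_3}\,dx_3\big)^2\ge0$. So dominate the weights before expanding the square, as the paper does.
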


\begin{proof}
We estimate the Schwartz functions by superpositions of Gaussians.
For $z\in {{\mathbb R}}^4$, from the Schwartz decay of $\varphi$ we obtain
\[|{\varphi}_{1,u_1}(z_1)|\otimes \g(z_2) \otimes |{\varphi}_{1,u_2}(z_3)| \otimes \g(z_4) \le C \langle u \rangle^{50} (1+|z|)^{-50}\]
On the other hand,
\[
\int_1^\infty\prod_{m=1}^4\g_\lambda(z_m)\lambda^{-47}\,d\lambda
=
\int_1^\infty
e^{-\pi|z|^2/\lambda^2}\lambda^{-51}\,d\lambda
\ge c
(1+|z|)^{-50}
\]
for some constant $c>0$.
Therefore,
\begin{equation} |{\varphi}_{1,u_1}|\otimes \g \otimes |{\varphi}_{1,u_2}| \otimes \g \le C \langle u \rangle^{50} \int_{1}^\infty \g_\lambda\otimes \g_\lambda \otimes \g_\lambda\otimes \g_\lambda \lambda^{-47}d\lambda \label{E:four-variable-Gaussian-superposition}
\end{equation}
We also have
\begin{equation}
\label{psibd}
\int_{{{\mathbb R}}}|\psi_{1,u_3}(r)|\,dr=\|\psi\|_1.
\end{equation}

Using \eqref{E:four-variable-Gaussian-superposition} and
\eqref{psibd}, and then expanding the square in
$\Lambda_{\mathcal T,u,1}$, we obtain
\begin{equation}
\Lambda_{\mathcal T,u,1}(f_0,f_1,f_2)
\le C
\langle u\rangle^{50}
\int_1^\infty
\Theta^{(3)}_{\mathcal T,\lambda,0}(F^{[1]})
\lambda^{-47}\,d\lambda,
\label{lambdabytheta_1}
\end{equation}
where $F^{[1]}$ is independent of its third index and is given,
for $j_3\in\{0,1\}$, by
\[
F^{[1]}_{(0,0,j_3)}=f_0,
\qquad
F^{[1]}_{(1,0,j_3)}=f_1,
\]
\[
F^{[1]}_{(0,1,j_3)}=f_2,
\qquad
F^{[1]}_{(1,1,j_3)}=1.
\]
Applying
Proposition \ref{prop:fullcubetree} to the form on the right-hand side of \eqref{lambdabytheta_1} gives
\[
\Lambda_{\mathcal T,u,1}(f_0,f_1,f_2)
\le C
\langle u\rangle^{50}|Q_{\mathcal T}|
\prod_{j=0}^2\mathcal M_{\mathcal T,4}(f_j)^2
\int_1^\infty\lambda^{37}\lambda^{-47}\,d\lambda \]
Integrating in $\lambda$ gives the desired estimate.
\end{proof}

\begin{lemma}
\label{lem:lambda2}
For any convex tree $\mathcal{T}$ and $u\in {{\mathbb R}}^3$,
\begin{equation}
\label{Lambda2_loc}
\Lambda_{\mathcal{T},u,2}(f_3) \le C \min( \langle u\rangle^{100}|Q_{\mathcal T}|
\mathcal M_{\mathcal T,2}(f_3)^2, \|f_3\|^2_2)
\end{equation}
with a constant $C$ independent of ${{\mathcal T}}$, $u$, and $f_0,f_1,f_2$.
\end{lemma}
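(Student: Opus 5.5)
We prove the two bounds in the minimum separately; both use the structure of $\Lambda_{\mathcal T,u,2}$. First integrate out $y_1,y_2$: $\g$ has integral one. The factors $|(\varphi_{1,u_n})_{t^{\alpha_n},p_n}(x_n)|$ for $n=1,2$ have integral $\|\varphi\|_1$ in $x_n$, uniformly in $u$. The factor $|\psi_{1,u_3}(r)|$ has integral $\|\psi\|_1$. What remains is a weighted integral of the square $|f_3*_3\h_{t^{\alpha_3}}(x_1,x_2,p_3+rt^{\alpha_3})|^2$.

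\emph{The global $L^2$ bound.} Enlarge $\Omega_{\mathcal T}$ to all of ${{\mathbb R}}^3\times(0,\infty)$, since the integrand is nonnegative. For fixed $r$, $t$, $x_1,x_2,y_1,y_2$, integrating in $p=(p_1,p_2,p_3)$ gives two contributions:
\begin{itemize}
\item the $p_1,p_2$ integrals of $|(\varphi_{1,u_n})_{t^{\alpha_n},p_n}(x_n)|$ each give $\|\varphi\|_1$;
\item the $p_3$ integral, after the translation $p_3\mapsto p_3-rt^{\alpha_3}$ (which removes all $r$ and $u$ dependence), gives $\int_{{\mathbb R}}|f_3*_3\h_{t^{\alpha_3}}(x_1,x_2,p_3)|^2\,dp_3$.
\end{itemize}
Then integrate in $dt/t$. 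By Plancherel in the third variable,
\[
\int_0^\infty|\widehat{\h}(t^{\alpha_3}\xi_3)|^2\frac{dt}{t}=\alpha_3^{-1}\int_0^\infty|\widehat\h(s)|^2\frac{ds}{s}<\infty,
\]
because $\widehat\h$ vanishes at $0$ and decays rapidly. Integrating in $x_1,x_2$ and the remaining $y_1,y_2,r$ (which contribute $\|\psi\|_1$) gives $\Lambda_{\mathcal T,u,2}(f_3)\le C\|f_3\|_2^2$ with $C$ independent of $u$.

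\emph{The local bound.} This should come from the edge tree estimate, Proposition~\ref{prop:f3bd}, applied with $F_{(0,0,0)}=F_{(0,0,1)}=f_3$ and all other entries equal to $1$. The plan has three steps.
\begin{enumerate}
\item Expand the square as a double integral over $y_3^0,y_3^1$ with kernel $\h_{t^{\alpha_3},p_3+rt^{\alpha_3}}(y_3^0)\,\h_{t^{\alpha_3},p_3+rt^{\alpha_3}}(y_3^1)$. This is exactly the third-coordinate structure of $\Theta^{(3)}_{\mathcal T,\lambda,r}$.
\item Handle the first two coordinates. There the weights are $|(\varphi_{1,u_n})_{t^{\alpha_n},p_n}(x_n)|$ and $\g_{t^{\alpha_n},p_n}(y_n)$, while $f_3$ depends only on $(x_1,x_2,y_3)$. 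The variables $y_1,y_2$ integrate out to $1$. The functions $|\varphi_{1,u_n}|$ are then dominated, as in \eqref{E:four-variable-Gaussian-superposition} but in two variables, by $C\langle u\rangle^{50}\int_1^\infty \g_\lambda\otimes\g_\lambda\,\lambda^{-\beta}\,d\lambda$ for a suitable $\beta$, say $\beta=47+2$ adjusted to the two-variable count.
\item To match the doubled structure of $\Theta^{(3)}$, note that $\g_\lambda(x_1^0)\g_\lambda(x_1^1)$ also appears there. Since the function is independent of $x_n^1$, integrating $x_n^1$ against $\g_{\lambda t^{\alpha_n},p_n}$ yields $1$. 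So the single weight can be rewritten in the doubled form at no cost.
\end{enumerate}
This gives
\[
\Lambda_{\mathcal T,u,2}(f_3)\le C\langle u\rangle^{50}\int_{{\mathbb R}}|\psi_{1,u_3}(r)|\int_1^\infty\Theta^{(3)}_{\mathcal T,\lambda,r}(F)\lambda^{-\beta}\,d\lambda\,dr.
\]
Proposition~\ref{prop:f3bd} bounds the integrand by $C\lambda^{37}(1+|r|)^{21}|Q_{\mathcal T}|\mathcal M_{\mathcal T,2}(f_3)^2$. Then use $\int|\psi_{1,u_3}(r)|(1+|r|)^{21}\,dr\lesssim\langle u\rangle^{21}$, which follows from the Schwartz decay of $\psi$ centered at $u_3$.

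\emph{Main obstacle.} The delicate point is the exponent bookkeeping: the decay $\lambda^{-\beta}$ from the superposition must beat the $\lambda^{37}$ loss. I would choose the comparison $(1+|z|)^{-M}$ with $M$ large, which Schwartz decay allows at the cost of $\langle u\rangle^{M}$. A gain of $\lambda^{-M-1+2}$ with $M\ge 40$ works in two variables, and the total $u$-power $M+21$ must stay $\le100$; $M=50$ gives $\langle u\rangle^{71}\le\langle u\rangle^{100}$. A second check is that the positivity $\Theta^{(3)}(F)\ge0$ from Lemma~\ref{lem:cs} holds: the configuration is $\mathsf C_3^0$-type, so it does. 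This makes the domination of the nonnegative square by positive Gaussian superpositions legitimate.
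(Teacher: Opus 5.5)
Your proposal is correct and follows the paper's proof. The $L^2$ bound comes from enlarging the domain to ${{\mathbb R}}^3\times(0,\infty)$, integrating out $y_1,y_2$ and then $p_1,p_2$, translating $p_3$, and applying Plancherel; the local bound comes from Gaussian domination of the weights followed by Proposition~\ref{prop:f3bd}, applied to the tuple with $F_{(0,0,0)}=F_{(0,0,1)}=f_3$ and all other entries equal to $1$.

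The differences from the paper are cosmetic. You integrate out $y_1,y_2$, dominate only the two $\varphi$-weights, and reinsert the $x_n^1$ Gaussians for free, whereas the paper's four-variable superposition \eqref{E:four-variable-Gaussian-superposition} produces the doubled Gaussians directly. You also integrate $|\psi_{1,u_3}|$ against $(1+|r|)^{21}$ instead of bounding it pointwise by $\langle u\rangle^{50}(1+|r|)^{-50}$, which gives the slightly better power $\langle u\rangle^{71}$. Finally, the domination is legitimate simply because the square is pointwise nonnegative and the weights are positive, so the appeal to Lemma~\ref{lem:cs} is unnecessary.
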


\begin{proof} First we prove
\[ \Lambda_{\mathcal{T},u,2}(f_3) \le C \langle u\rangle^{100} |Q_{\mathcal T}|
\mathcal M_{\mathcal T,2}(f_3)^2.\]
Using \eqref{E:four-variable-Gaussian-superposition} and the estimate
\begin{equation}
|\psi_{1,u_3}(r)|
\le C
(1+|u_3|)^{50}(1+|r|)^{-50}
\le C
\langle u\rangle^{50}(1+|r|)^{-50},
\end{equation} and then expanding the square, we obtain
\begin{equation}
\Lambda_{\mathcal T,u,2}(f_3)
\le C
\langle u\rangle^{100}
\int_1^\infty\int_{{{\mathbb R}}}
\Theta^{(3)}_{\mathcal T,\lambda,r}(F^{[2]})(1+|r|)^{-50}\lambda^{-47}\,dr\,d\lambda,
\label{lambdabytheta_2}
\end{equation}
where
\[
F^{[2]}_{(0,0,0)}=F^{[2]}_{(0,0,1)}=f_3
\]
and $F^{[2]}_j=1$ for all other $j\in J$.

Proposition \ref{prop:f3bd} applied to the form on the right-hand side gives
\[
\Lambda_{\mathcal T,u,2}(f_3)
\le C
\langle u\rangle^{100}|Q_{\mathcal T}|
\mathcal M_{\mathcal T,2}(f_3)^2
\int_1^\infty\lambda^{37}\lambda^{-47}\,d\lambda
\int_{{{\mathbb R}}}(1+|r|)^{21}(1+|r|)^{-50}\,dr\]
The desired estimate now follows by integrating in $\lambda$ and $r$.

Next we prove
\[\Lambda_{\mathcal T,u,2}(f_3)
\le C \|f_3\|^2_2 .\]
Enlarging the $(p,t)$-domain from $\Omega_{\mathcal T}$ to
${{\mathbb R}}^3\times(0,\infty)$, we obtain
\[\Lambda_{\mathcal{T},u,2}(f_3)\le \int_{0}^\infty \int_{{{\mathbb R}}^3} \int_{{{\mathbb R}}^5}
\Big( \int_{{{\mathbb R}}}f_3(x_1,x_2,y_3) \h_{t^{\alpha_3}, p_3+r t^{\alpha_3}}(y_3) dy_3\Big)^2 d\mu \, dp\, \frac{dt}{t}\]
Then we integrate in $y_1,y_2$ and use $\int_{{{\mathbb R}}}\g=1$.
Next we integrate in $p_1,p_2$ and use
\[
\int_{{{\mathbb R}}}
|({\varphi}_{1,u_i})_{t^{\alpha_i},p_i}(x_i)|\,dp_i
=
\|\varphi\|_1,
\]
for $i=1,2$.
Finally, the change of variables
$q=p_3+rt^{\alpha_3}$ and \eqref{psibd} give
\begin{equation}
\Lambda_{\mathcal{T},u,2}(f_3)\le C\int_{{{\mathbb R}}^2}\int_0^\infty\int_{{{\mathbb R}}}
|H_t f_3(x_1,x_2,q) |^2
\,dq\,\frac{dt}{t}\,dx_1dx_2,
\label{sqftc}
\end{equation}
where
\[H_t f_3(x_1,x_2,q) = \int_{{{\mathbb R}}}
f_3(x_1,x_2,y_3)\h_{t^{\alpha_3},q}(y_3)\,dy_3 \]
For fixed $x_1,x_2$, Plancherel's theorem in the third variable gives
\[
\int_0^\infty\int_{{{\mathbb R}}}
| H_t f_3(x_1,x_2,q)|^2
\,dq\,\frac{dt}{t}
=
\int_{{{\mathbb R}}}
|\widehat{f_3(x_1,x_2,\cdot)}(\xi_3)|^2
\Big (
\int_0^\infty
|\widehat{\h}(t^{\alpha_3}\xi_3)|^2
\frac{dt}{t}
\Big )d\xi_3.
\]
The $t$-integral equals a finite constant independent of $\xi_3\ne 0$ because
$\widehat{\h}$ vanishes at the origin and is rapidly decreasing, so the last display is
\[ C \int_{{{\mathbb R}}}
|\widehat{f_3(x_1,x_2,\cdot)}(\xi_3)|^2 d\xi_3 = C \int_{{{\mathbb R}}}
|{f_3(x_1,x_2,y_3)}|^2 dy_3 \]
Here we used Plancherel once more. We obtain a bound for \eqref{sqftc} by $C\|f_3\|_2^2$.
\end{proof}

\subsection{\texorpdfstring{From $\Lambda_{{{\mathcal T}},u}$ to $\Lambda_u$, stopping time}{Stopping time}}
\label{sec:lambdaT_to_Lambda}
We first state a general stopping-time lemma.
\begin{lemma}
\label{lem:abstract-stopping-time}
Let $d\geq 2$, and let
$
1\leq s_j<q_j<\infty,
0\leq j\leq d-1$,
satisfy
\begin{equation}
\sum_{j=0}^{d-1}\frac1{q_j}=1.
\label{E:stopping-exponent-sum}
\end{equation}
There exists a constant $C>0$ such that the following holds.

For $0\le j \le d-1$ let $g_j$ be non-zero Schwartz functions on ${{\mathbb R}}^3$, and
let $\mathcal Q_0$ be a finite convex collection of anisotropic
dyadic boxes.
For $\mathbf n=(n_j)_{j=0}^{d-1}\in{{\mathbb Z}}^d$, let
\begin{equation}
\mathcal P_{\mathbf n}
=
\Big\{
Q\in\mathcal Q_0:
2^{n_j-1}<\sup_{\substack{Q'\in\mathcal Q_0\\Q\subseteq Q'}}
\mathcal M_{\{Q'\},s_j}(g_j)\leq 2^{n_j}
\text{ for }0\le j \le d-1
\Big\}.
\label{E:stopping-level-collection}
\end{equation}
Let $\mathcal P_{\mathbf n}^{\max}$ denote the collection of maximal
elements of $\mathcal P_{\mathbf n}$. For
$Q\in\mathcal P_{\mathbf n}^{\max}$, define
\begin{equation}
\mathcal T_Q
=
\left\{
Q'\in\mathcal P_{\mathbf n}:Q'\subseteq Q
\right\}.
\label{E:stopping-tree-definition}
\end{equation}
Then the collections $\mathcal T_Q$ form a partition of
$\mathcal Q_0$ into convex trees and
\begin{equation}
\sum_{\mathbf n\in{{\mathbb Z}}^d}
\sum_{Q\in\mathcal P_{\mathbf n}^{\max}}
|Q|
\prod_{j=0}^{d-1}
\mathcal M_{\mathcal T_Q,s_j}(g_j)
\le C
\prod_{j=0}^{d-1}\|g_j\|_{q_j}.
\label{E:abstract-stopping-product-sum}
\end{equation}
\end{lemma}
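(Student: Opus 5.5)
The proof has two parts: a combinatorial statement about the partition into trees, and a counting argument for the weighted sum \eqref{E:abstract-stopping-product-sum}.

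\emph{Partition into convex trees.} For $Q\in\mathcal Q_0$ and each $j$, set
\[
\mathcal S_j(Q)=\sup_{Q'\in\mathcal Q_0,\ Q\subseteq Q'}\mathcal M_{\{Q'\},s_j}(g_j).
\]
This quantity is nonincreasing as $Q$ grows, i.e.\ monotone along the inclusion order. Each $Q$ lies in exactly one $\mathcal P_{\mathbf n}$. The values are finite and positive because the $g_j$ are nonzero Schwartz functions and $\mathcal Q_0$ is finite. Dyadic boxes of the same $\alpha$-type are either nested or disjoint, so every $Q$ is contained in a unique maximal element of $\mathcal P_{\mathbf n}$. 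Two distinct maximal elements are disjoint, so the $\mathcal T_Q$ partition $\mathcal Q_0$.

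For convexity, suppose $Q_1\subseteq Q_2\subseteq Q_3$ with $Q_1,Q_3\in\mathcal T_Q$. Convexity of $\mathcal Q_0$ gives $Q_2\in\mathcal Q_0$. Monotonicity gives $\mathcal S_j(Q_3)\le \mathcal S_j(Q_2)\le\mathcal S_j(Q_1)$, and both ends lie in $(2^{n_j-1},2^{n_j}]$, so $Q_2\in\mathcal P_{\mathbf n}$. Finally $Q_2\subseteq Q_3\subseteq Q$, hence $Q_2\in\mathcal T_Q$.

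\emph{The weighted sum.} For $Q'\in\mathcal T_Q$ we have $\mathcal M_{\{Q'\},s_j}(g_j)\le \mathcal S_j(Q')\le 2^{n_j}$, so $\mathcal M_{\mathcal T_Q,s_j}(g_j)\le 2^{n_j}$. The left side of \eqref{E:abstract-stopping-product-sum} is therefore at most
\[
\sum_{\mathbf n}\prod_j 2^{n_j}\,A_{\mathbf n},\qquad A_{\mathbf n}=\Big|\bigcup_{Q\in\mathcal P^{\max}_{\mathbf n}}Q\Big|,
\]
where the maximal boxes are disjoint. Each such $Q$ satisfies $\mathcal S_j(Q)>2^{n_j-1}$ for every $j$, so $Q\subseteq E_j(n_j)$, where
\[
E_j(n)=\{x:\ \mathcal M^{\ast}_{s_j} g_j(x)>c\,2^{n}\}
\]
and $\mathcal M^{\ast}_{s}g=(M(|g|^{s}))^{1/s}$ is built from the anisotropic (strong, product-type, or tail-weighted) maximal function. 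This works because $|g|^{s}\ast\Theta_{\ell(Q')}(c(Q'))\lesssim\inf_{x\in Q'}M(|g|^{s})(x)$, and $Q\subseteq Q'$. The Gaussian-type tails of $\theta$ sum dyadically into the anisotropic maximal operator, which is bounded on $L^{q/s}$ since $q_j/s_j>1$. Hence
\[
A_{\mathbf n}\le \min_j |E_j(n_j)|.
\]

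\emph{Main obstacle: summing over the multi-index.} Write $\mu_j(n)=|E_j(n)|$. The maximal function bounds give
\[
\sum_n 2^{nq_j}\mu_j(n)\lesssim \|g_j\|_{q_j}^{q_j}.
\]
Normalize $\|g_j\|_{q_j}=1$. It then suffices to show
\[
\sum_{\mathbf n}\prod_j 2^{n_j}\min_j\mu_j(n_j)\lesssim 1.
\]
Use $\min_j a_j\le\prod_j a_j^{\theta_j}$ with $\theta_j=1/q_j$, whose sum is $1$. This bounds the sum by
\[
\prod_j\sum_{n}2^{n}\mu_j(n)^{1/q_j}.
\]
That alone does not factor into something finite: a plain geometric mean loses summability. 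So I would proceed as in \cite{CDR21} and \cite{DR21}. Split the sum according to which $j$ attains the minimum, or use a weighted min-to-product with exponents depending on the offsets $n_j-q_j^{-1}\,\cdot$. Concretely, write $n_j=\frac{m}{q_j}+k_j$, use $\min_j\mu_j\le \mu_{j_0}$ for the index $j_0$ with the largest $k_{j}$, and exploit that $\sum_j k_j/q_j$-type constraints produce geometric decay in the deviations $k_j$. The summation over the common level $m$ then uses the weak-type/Lorentz tail bound $\sum_n 2^{nq}\mu(n)\lesssim 1$, uniformly across the $j$'s.

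If this exact bookkeeping fails, the fallback is the standard route. First prove restricted weak-type bounds for characteristic functions $g_j=\mathbf 1_{E_j}$, where the geometric sums are explicit. Then recover the strong bound by multilinear interpolation, perturbing the exponents $q_j$ while keeping $s_j<q_j$. This summation is the step I expect to need the most care.
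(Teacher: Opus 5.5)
The partition argument (monotonicity of $\mathcal S_j$, nestedness of same-type anisotropic dyadic boxes, convexity of each $\mathcal T_Q$) is correct. So is the reduction of the left side of \eqref{E:abstract-stopping-product-sum} to $\sum_{\mathbf n}\prod_j 2^{n_j}\min_j\mu_j(n_j)$, with $\sum_n 2^{nq_j}\mu_j(n)\lesssim\|g_j\|_{q_j}^{q_j}$ coming from the strong maximal function and $q_j/s_j>1$. This is the standard route the paper refers to. The gap is the final summation over $\mathbf n\in\mathbb Z^d$. It is the only non-routine step of the lemma, and you leave it unresolved.

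You are right that the geometric mean fails, but none of your proposed fixes closes the argument:
\begin{itemize}
\item Splitting according to which $j$ attains the minimum does not work. If you bound $\min_j\mu_j(n_j)$ by $\mu_{j_0}(n_{j_0})$ and sum each other $n_i$ under the constraint $\mu_i(n_i)\ge\mu_{j_0}(n_{j_0})$, the natural bound is $\sum_{n_i}2^{n_i}\lesssim\mu_{j_0}(n_{j_0})^{-1/q_i}$. That puts you back at $\sum_n 2^n\mu_{j_0}(n)^{1/q_{j_0}}$, the same Lorentz-type quantity.
\item The decomposition $n_j=m/q_j+k_j$ is not a proof until you say how $m$ is fixed.
\item The interpolation fallback does not apply to this lemma. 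The left side of \eqref{E:abstract-stopping-product-sum} is not multilinear in $(g_j)$, because the collections $\mathcal P_{\mathbf n}$, and hence the trees, depend on all the $g_j$ jointly. Indicator functions are also not admissible inputs.
\end{itemize}

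The missing idea is to split according to the index that maximizes the normalized level $n_jq_j$, not the one that minimizes the measure. Normalize $\|g_j\|_{q_j}=1$ as you did. For each $\mathbf n$ pick $j_0$ with $L:=n_{j_0}q_{j_0}=\max_j n_jq_j$, and bound $\min_j\mu_j(n_j)\le\mu_{j_0}(n_{j_0})$. Since $\sum_j q_j^{-1}=1$ and $L/q_{j_0}=n_{j_0}$,
\[
\prod_{j}2^{n_j}=2^{L}\prod_{j\neq j_0}2^{\,n_j-L/q_j},\qquad n_j-\frac{L}{q_j}\le 0\ \ (j\neq j_0).
\]
For fixed $n_{j_0}$, each sum of $2^{n_j-L/q_j}$ over integers $n_j\le L/q_j$ is at most $2$. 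So the part of the sum assigned to $j_0$ is at most
\[
2^{d-1}\sum_{n}2^{nq_{j_0}}\mu_{j_0}(n)\lesssim 1,
\]
and summing over the $d$ choices of $j_0$ gives the claim. In your notation this is the choice $m=\max_j n_jq_j$, so that all $k_j\le 0$ and $k_{j_0}=0$. If you want to avoid the normalization, run the same argument with $j_0$ maximizing $2^{n_jq_j}\|g_j\|_{q_j}^{-q_j}$.
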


The proof of this lemma is standard and follows along the lines of the arguments in \cite[Section 4.4]{CDR21} and \cite{Kov12, Dur15}.
We omit the details.

We now deduce the estimate for $\Lambda_{u}$ and conclude the proof of Proposition \ref{prop:twisted}.
By splitting the functions into their real and imaginary parts and
using multilinearity of $\Lambda_{u}$, we may assume all input functions are real valued. Exhausting ${{\mathbb R}}^3\times(0,\infty)$ by regions associated with finite convex collections and
a limiting argument, it suffices to prove an estimate for $\Lambda_{\mathcal{Q}_0,u}$ for any finite
convex collection $\mathcal Q_0$ of anisotropic dyadic boxes, with bounds independent of the collection. We now fix one such $\mathcal Q_0$.

Let $p_0,p_1,p_2$ be as in Proposition \ref{prop:twisted}. We first consider the case $2<p_3<\infty$.
Lemma
\ref{lem:abstract-stopping-time} partitions $\mathcal Q_0$ into
convex trees $\mathcal T_Q$. Since the regions
$\Omega_{\mathcal T_Q}$ are pairwise disjoint up to null sets and
have union $\Omega_{\mathcal Q_0}$,
\begin{equation}
\Lambda_{\mathcal Q_0,u}(f)= \sum_{\mathbf n}
\sum_{Q\in\mathcal P_{\mathbf n}^{\max}}
\Lambda_{\mathcal T_Q,u}(f).
\label{Lambda_treedec}
\end{equation}
For each tree $\mathcal T_Q$, Lemma \ref{lem:lambda12-cs} gives
\begin{equation}
\label{stop_cs}
|\Lambda_{\mathcal T_Q,u}(f)|^2
\leq
\Lambda_{\mathcal T_Q,u,1}(f_0,f_1,f_2)
\Lambda_{\mathcal T_Q,u,2}(f_3)
\end{equation}
Lemmas
\ref{lem:lambda1} and \ref{lem:lambda2}, applied to \eqref{stop_cs} give for each tree $\mathcal T_Q$
\[ |\Lambda_{\mathcal T_Q,u}(f)|
\le C
\langle u\rangle^{75}|Q|
\Big(
\prod_{j=0}^2
\mathcal M_{\mathcal T_Q,4}(f_j)
\Big)
\mathcal M_{\mathcal T_Q,2}(f_3).
\]
Using \eqref{Lambda_treedec} and the triangle inequality, we obtain
\[
|\Lambda_{\mathcal Q_0,u}(f)|
\le C
\langle u\rangle^{75}
\sum_{\mathbf n}
\sum_{Q\in\mathcal P_{\mathbf n}^{\max}}
|Q|
\Big(
\prod_{j=0}^2
\mathcal M_{\mathcal T_Q,4}(f_j)
\Big)
\mathcal M_{\mathcal T_Q,2}(f_3)
\le C
\langle u\rangle^{75}\prod_{j=0}^3\|f_j\|_{p_j}.\]
The last bound follows by Lemma \ref{lem:abstract-stopping-time} applied with $d=4$, $s_0=s_1=s_2=4,\;
s_3=2$,
$q_j=p_j$, and $g_j=f_j$.

We next consider the endpoint $p_3=2$. Using Lemma \ref{lem:abstract-stopping-time} we again partition as in \eqref{Lambda_treedec}.
By \eqref{stop_cs} and Cauchy-Schwarz over the family of trees ${{\mathcal T}}_Q$,
\[|\Lambda_{\mathcal Q_0,u}(f)| \le \Big( \sum_{{{\mathcal T}}_Q} \Lambda_{\mathcal T_Q,u,1}(f_0,f_1,f_2)
\Big )^{1/2} \Big( \sum_{{{\mathcal T}}_Q}\Lambda_{\mathcal T_Q,u,2}(f_3) \Big )^{1/2} \]
Lemma \ref{lem:lambda1} gives
\begin{equation}
\label{lambda1_stop}
\sum_{{{\mathcal T}}_Q} \Lambda_{\mathcal T_Q,u,1}(f_0,f_1,f_2) \le C\langle u \rangle^{50} \sum_{\mathbf n}
\sum_{Q\in\mathcal P_{\mathbf n}^{\max}}
|Q|
\prod_{j=0}^2
\mathcal M_{\mathcal T_Q,4}(f_j)^2
\end{equation}
Denoting $g_j = f_j^2$, we have
\[\mathcal M_{\mathcal T_Q,4}(f_j)^2 = \mathcal M_{\mathcal T_Q,2}(g_j)\]
We apply Lemma \ref{lem:abstract-stopping-time} with $d=3$, $q_j=p_j/2, \, s_j =2$. Since $p_3=2$ and the exponents satisfy H\"older scaling,
$
\sum_{j=0}^21/{q_j}
=1.$
Moreover, $p_j>4$ implies $q_j=p_j/2>2=s_j$. Lemma~\ref{lem:abstract-stopping-time} then bounds \eqref{lambda1_stop} by
\[C\langle u \rangle^{50} \prod_{j=0}^2 \|g_j\|_{q_j} = C\langle u \rangle^{50} \prod_{j=0}^2 \|f_j\|^2_{p_j} \]
Now, Lemma \ref{lem:lambda2} applied to the collection $\mathcal{Q}_0$ (the proof of the $L^2$ bound in Lemma \ref{lem:lambda2} does not use the tree structure) gives
\[\sum_{{{\mathcal T}}_Q}\Lambda_{\mathcal T_Q,u,2}(f_3) = \Lambda_{\mathcal{Q}_0,u,2}(f_3) \le C \|f_3\|^2_2 \]
Combining these estimates finishes the proof for $\Lambda_{\mathcal{Q}_0,u}$, and hence the proof of Proposition \ref{prop:twisted}.

\end{document}